\newcommand{\eps}{\epsilon}
\newcommand{\Psip}{\Psi_{\scriptscriptstyle{PCA}}}
\newcommand{\Psin}{\Psi_{\scriptscriptstyle{NN}}}
\newcommand{\Psinum}{\Psi_{\scriptscriptstyle{num}}}
\newcommand{\ep}{e_{\scriptscriptstyle{PCA}}}
\newcommand{\en}{e_{\scriptscriptstyle{NN}}}
\newcommand{\OOmega}{D}
\newcommand{\tr}{\text{tr }} 
\newcommand{\X}{\mathcal{X}}
\newcommand{\Y}{\mathcal{Y}}
\newcommand{\R}{\mathbb{R}}
\newcommand{\E}{\mathbb{E}}
\newcommand{\HS}{\text{HS}(\mathcal{H})}
\newcommand{\Prob}{\mathbb{P}}
\newcommand{\mug}{\mu_{\text{G}}}
\newcommand{\mul}{\mu_{\text{L}}}
\newcommand{\mup}{\mu_{\text{P}}}
\definecolor{darkred}{rgb}{.7,0,0}
\definecolor{darkgreen}{rgb}{0,0.7,0} 
\definecolor{darkblue}{rgb}{0,0,0.7}
\definecolor{darkyellow}{rgb}{0.0,0.7,0.7}
\newcommand{\FX}{F_\X}
\newcommand{\FY}{F_\Y}
\newcommand{\GX}{G_\X}
\newcommand{\GY}{G_\Y}
\newcommand{\dX}{d_\X}
\newcommand{\dY}{d_\Y}
\newcommand{\s}{s}
\newcommand{\Lt}{t}
\newcommand{\Jr}{r}
\newcommand{\tx}{\tilde{x}}
\newcommand{\tchi}{\chi}
\newcommand{\HH}{\mathcal{H}}
\newcommand{\FH}{F_\HH}
\newcommand{\GH}{G_\HH}
\title{Model Reduction And Neural Networks For Parametric PDEs}
\author[K. Bhattacharya]{\firstname{Kaushik} \lastname{Bhattacharya}}
\address{Mechanical and Civil Engineering, California Institute of Technology, Pasadena, CA, USA}
\email{bhatta@caltech.edu}
\author[B. Hosseini]{\firstname{Bamdad} \lastname{Hosseini}}
\address{Computing and Mathematical Sciences, California Institute of Technology, Pasadena, CA, USA}
\email{bamdadh@caltech.edu}
\author[N. Kovachki]{\firstname{Nikola} \middlename{B.} \lastname{Kovachki}}
\address{Computing and Mathematical Sciences, California Institute of Technology, Pasadena, CA, USA}
\email{nkovachki@caltech.edu}
\author[A. Stuart]{\firstname{Andrew} \middlename{M.} \lastname{Stuart}}
\address{Computing and Mathematical Sciences, California Institute of Technology, Pasadena, CA, USA}
\email{astuart@caltech.edu}
\keywords{approximation theory, deep learning, model reduction, neural networks,
partial differential equations.}
\subjclass{    65N75; 
    62M45; 
    68T05; 
    60H30; 
    60H15 
    }
\begin{document}

\begin{abstract}
We develop a general framework for data-driven approximation of
input-output maps between infinite-dimensional spaces. The proposed
approach is motivated by the recent successes of neural networks and deep learning, in combination
with ideas from model reduction. This combination results in a neural
network approximation which, in principle, is defined on infinite-dimensional
spaces and, in practice, is robust to the dimension of finite-dimensional
approximations of these spaces required for computation. For a class of 
input-output maps, and suitably  chosen probability measures on the inputs, 
we prove convergence of the proposed approximation methodology. 
We also include numerical experiments which 
demonstrate the effectiveness of the method, showing convergence and
robustness of the approximation scheme with respect to
the size of the discretization, and compare it
with existing algorithms from the literature; our examples
include the mapping from coefficient to solution in a divergence form 
elliptic partial differential equation (PDE) problem, 
and the solution operator for viscous Burgers' equation.
\end{abstract}

\maketitle

\section{Introduction}
\label{sec:intro}

At the core of many computational tasks arising in science and engineering
is the problem of repeatedly evaluating the output of an expensive 
forward model for many statistically similar inputs. Such settings include 
the numerical solution of parametric partial differential equations (PDEs),
time-stepping for evolutionary PDEs and, more generally, the evaluation of input-output
maps defined by black-box computer models. The key idea in this paper is
the development of a new data-driven emulator which is
defined to act between the infinite-dimensional
input and output spaces of maps such as those
defined by PDEs. By
defining approximation architectures
on infinite-dimensional spaces, we provide the basis
for a methodology which is robust to the resolution of the
finite-dimensionalizations used to create implementable algorithms.

This work is motivated by the recent empirical success of neural networks 
in machine learning applications such as image classification, aiming to 
explore whether this success has any implications for algorithm development in 
different applications arising in science and engineering. We further wish to compare 
the resulting new methods with traditional algorithms from the field of
numerical analysis for the approximation of infinite-dimensional maps, such 
as the maps defined by parametric PDEs or the solution operator for
time-dependent PDEs. We propose a method for approximation of such solution maps 
purely in a data-driven fashion by lifting the concept of neural networks to produce 
maps acting between infinite-dimensional spaces. 
Our method exploits approximate 
finite-dimensional structure in maps between Banach spaces
of functions through three separate steps:
(i) reducing the dimension of the input; (ii) reducing the dimension 
of the output, and (iii) finding a map 
between the two resulting finite-dimensional latent spaces.
Our approach takes advantage 
of the approximation power of neural networks while allowing for the use of 
well-understood, classical dimension reduction (and reconstruction) techniques. 
Our goal is to reduce the complexity of the input-to-output map by 
replacing it with a data-driven emulator. In achieving this goal 
we design an emulator which enjoys mesh-independent 
approximation properties, a fact which we establish through a combination
of theory and numerical experiments; to the best of our knowledge, these
are the first such results in the area of neural networks for PDE problems.

To be concrete, and to guide the literature review which follows,
consider the following prototypical parametric PDE
\[(\mathcal{P}_x y)(\s) = 0, \qquad \forall \s \in \OOmega,\]
where \(\OOmega \subset \R^d\) is a bounded open set, \(\mathcal{P}_x\) is a
differential operator depending on a parameter \(x \in \X\) and \(y \in \Y\)
is the solution to the PDE (given appropriate boundary conditions). The
Banach spaces
\(\X\) and \(\Y\) are assumed to be spaces of real-valued
functions on $\OOmega.$
Here, and in the rest of this paper, we  consistently
use $\s$ to denote the independent variable in spatially dependent PDEs,
and  reserve $x$ and $y$ for the input and output of the PDE model of interest.
We adopt this idiosyncratic notation (from the PDE perspective) to keep
our exposition in line
with standard machine learning notation for
input and output variables.

\begin{example}
Consider second order elliptic PDEs of the form
\begin{align}
\begin{split}
  \label{eq:darcy}
- \nabla \cdot (a(\s) \nabla u(\s)) &= f(\s), \quad \s \in \OOmega \\
u(\s) &= 0, \qquad \:\: \s \in \partial \OOmega
\end{split}
\end{align}
which are prototypical of many scientific applications.
As a concrete example of a mapping defined by this equation, we 
restrict ourselves to the setting where 
the forcing term \(f\) is fixed, and 
consider the diffusion coefficient \(a\) as the
input parameter \(x\) and the PDE solution \(u\) as output \(y\). 
In this setting, we have \(\X = L^\infty(D;\R_+)\), \(\Y = H_0^1(D;\R)\), 
and \(\mathcal{P}_x = - \nabla_s\cdot  ( a \nabla_s \cdot ) - f\), 
equipped with homogeneous Dirichlet boundary
conditions. This is the Darcy flow
problem which we consider numerically in Section \ref{sec:numdarcy}.
\end{example}

\subsection{Literature Review}
\label{subsec:litreview}

The recent success of neural networks on a variety of high-dimensional 
machine learning problems \cite{lecunnature} has led to a rapidly growing body of  
research pertaining to applications in scientific problems \cite{Adler2017,bhatnagar2019prediction,
  Cheng2019,weinandeepritz,gilmer2017neural,holland2019field,raissi2019physics,surrogatemodeling,smith2020eikonet}.
In particular, there is a substantial number
of articles which investigate the use of neural networks 
as surrogate models, and more specifically for obtaining the solution of 
(possibly parametric) PDEs.

We summarize the two most prevalent existing neural network
based strategies in the approximation of PDEs in general, 
and parametric PDEs specifically. The first approach can be 
thought of as image-to-image regression. The goal is to approximate 
the parametric solution operator mapping elements of $\X$  to  $\Y$.
This is achieved by discretizing both spaces to obtain finite-dimensional input and output spaces
of dimension $K$. We assume to have
access to  data in the form of observations of input \(x \) and output \(y\) discretized
on  
 \(K\)-points within the domain \(\OOmega\). The methodology then proceeds 
 by defining a neural network \(F: \R^K \to \R^K\) and regresses the input-to-output map
 by minimizing a misfit functional defined using the point values of $x$ and $y$ on
 the discretization grid.
The articles \cite{Adler2017,bhatnagar2019prediction,holland2019field,surrogatemodeling,geist2020numerical}
    apply this methodology 
for various   forward  and inverse problems in physics and engineering, 
utilizing a variety of  neural network architectures in the regression step;
the related paper \cite{khoo2017solving} applies a similar approach, but
the output space is $\R.$
This innovative set of papers demonstrate some success. However, from the
perspective of the goals of our work, their approaches are not robust to  
mesh-refinement: the neural network is defined as a mapping between 
two Euclidean spaces of values on mesh points.
The rates of approximation depend on the 
underlying discretization and an overhaul of the architecture would be
required to produce results consistent across different discretizations. 
The papers \cite{lu2019deeponet,lu2020deeponet} 
make a conceptual step in the direction
of interest to us in this paper, as they introduce an architecture based
on a neural network approximation theorem for operators 
from \cite{chen1995universal}; but as implemented
the method still results in parameters which depend on the mesh used.
Applications of this methodology may be found in
\cite{cai2020deepm,mao2020deepm,lin2020operator}.

The second approach does
not directly seek to find the parametric map from $\X$ to $\Y$
 but rather is  thought of, for fixed $x \in \X$, as being
a parametrization of the solution $y \in \Y$ by means of a
deep neural network  
\cite{Dockhorn19,weinandeepritz,hsieh2018learning,lagaris1998artificial,raissi2019physics,shin2020convergence}. 
This methodology parallels collocation methods for the numerical 
solution of PDEs
by searching over approximation spaces defined by neural networks.
The solution of the PDE is written as a neural network approximation
in which the spatial (or, in the time-dependent case, spatio-temporal) 
variables in $\OOmega$ are inputs and the solution is the output. 
This parametric function is then substituted 
into the PDE and the residual is made small
by optimization. The resulting neural network
may be thought of as a novel structure which composes the action of the
operator $\mathcal{P}_x$, for fixed $x$, with a neural network taking
inputs in $\OOmega$ \cite{raissi2019physics}.
While this method leads to an approximate solution map defined on the
input domain $\OOmega$ (and not on
a $K-$point discretization of the domain), 
the parametric dependence of the approximate solution map is fixed.
Indeed for a new input parameter \(x\), 
one needs to re-train the neural network by solving
the associated optimization 
problem in order to produce a new map \(y : \OOmega \to \R\); this may
be prohibitively expensive when parametric dependence of the solution is the target of analysis.
Furthermore 
the approach cannot be made fully data-driven as it needs knowledge of the underlying PDE,
and furthermore the operations required to apply the differential operator
may interact poorly with the neural network approximator during the back-propagation
(adjoint calculation) phase of the optimization.

The work \cite{ruthotto2019DeepNN} examines the
forward propagation of neural networks as the flow of a time-dependent PDE,
combining the continuous time formulation of ResNet 
\cite{haber2017stable,weinan2017proposal} with the idea
of neural networks acting on spaces of functions:
by considering the initial condition as a function, 
this flow map may be thought of as a neural network acting 
between infinite-dimensional spaces. The idea of learning
PDEs from data using neural networks, again generating
a flow map between infinite dimensional spaces,
was studied in the 1990s in the papers
\cite{krischer1993model,Kev98} with the former using
a PCA methodology, and the latter using the method of lines.
More recently the works \cite{HESTHAVEN201855,WANG2019289}
also employ a PCA methodology for the output space but only consider 
very low dimensional input spaces. Furthermore the works \cite{lee2020model,fresca2020comprehensive,gonzalez2018deep,fresca2020comprehensive} proposed a 
model reduction approach for dynamical systems by use of 
dimension reducing neural networks (autoencoders). However only a
fixed discretization of space is considered, yielding a method 
which does not produce a map between two infinite-dimensional spaces.

The development of numerical methods for parametric problems is not, of course,
restricted to the use of neural networks. Earlier works in the engineering literature
started in the 1970s focused on computational methods which represent PDE solutions
in terms of known basis functions that contain information
about the solution structure \cite{almroth1978automatic,nagy1979modal}.
This work led to the development of the  reduced basis method (RBM) which is widely adopted
in engineering; see 
\cite{barrault2004empirical,hesthaven2016certified,quarteroni2015reduced} 
and the references therein. 
The methodology was also used for stochastic
problems, in which the input space $\X$ is endowed with a probabilistic
structure, in \cite{boyaval2010reduced}. The study of RBMs led
 to broader interest in the approximation theory community focusing on  rates
of convergence for the RBM approximation of maps between Banach spaces,
and in particular maps defined through parametric dependence of
PDEs; see \cite{DeVoreReducedBasis} for an overview of this work.

Ideas from model reduction have been combined with data-driven
learning in the sequence of papers \cite{PEHERSTORFER2016196,mcquarrie2020datadriven,Benner_2020,peherstorfer2019sampling,qian2020lift}.
The setting is the learning of data-driven approximations to
time-dependent PDEs. Model reduction is used to find a low-dimensional
approximation space and then a system of ordinary differential 
equations (ODEs) is learned in this low-dimensional latent space.  
These ODEs are assumed to have vector fields from a known class
with unknown linear coefficients; learning is thus reduced to
a least squares problem. The known vector fields mimic properties
of the original PDE (for example are restricted to linear and
quadratic terms for the equations of geophysical fluid dynamics);
additionally transformations may be used to render the original PDE 
in a desirable form form (such as having only quadratic nonlinearities.)

The development of theoretical analyses to understand the use
of neural networks to approximate PDEs is currently in its infancy, but
interesting results are starting to emerge
\cite{herrmann2020deep,kutyniok2019theoretical,schwab2019deep,laakmann2020efficient}. 
A recurrent theme in the analysis of neural networks, and
in these papers in particular, is that the work typically
asserts the \emph{existence} of a choice of neural network parameters
which achieve a certain approximation property; because of the
non-convex optimization techniques used to determine the
network parameters, the issue of \emph{finding} these parameters
in practice is rarely addressed. 
Recent works take a different perspective on data-driven
approximation of PDEs, motivated by small-data scenarios;
see the paper \cite{cohen2020state} which relates, in part, to
earlier work focused on the small-data setting 
\cite{binev2017data,maday2015parameterized}.
These approaches are more akin to data assimilation
\cite{reich2015probabilistic,law2015data} where the data 
is incorporated into a model.

\subsection{Our Contribution}
\label{subsec:contribution}

The primary contributions of this paper are as follows:

\begin{enumerate}

\item we propose a novel data-driven methodology capable of  
learning mappings between Hilbert spaces;
  
\item the proposed method combines model reduction with neural networks to
obtain algorithms with controllable approximation errors as maps
between Hilbert spaces;

\item as a result of this approximation property of 
maps between Hilbert spaces, the learned maps exhibit 
desirable mesh-independence properties;

\item we prove that our architecture is
sufficiently rich to contain approximations of arbitrary
accuracy, as a mapping between function spaces; 

\item we present numerical experiments that demonstrate the efficacy
of the proposed methodology, demonstrate desirable mesh-indepence
properties, elucidate its properties beyond the confines 
of the theory, and compare with other methods for parametric PDEs.

\end{enumerate}

Section \ref{sec:method} outlines the approximation methodology, which
is based on use of {\it principal component analysis (PCA)} in a Hilbert space to finite-dimensionalize the
input and output spaces, and a neural network between the
resulting finite-dimensional spaces. 
Section \ref{sec:analysis} contains statement and proof of
our main approximation result, which invokes a global Lipschitz
assumption on the map to be approximated.  
In Section \ref{sec:numerics} we present our numerical experiments, some
of which relax the global Lipschitz assumption, and others which involve
comparisons with other approaches from the literature.
Section \ref{sec:conclusion} contains concluding remarks,
including directions for further study.
We also include auxiliary results in the appendix that complement and 
extend the main theoretical developments of the article.
Appendix~\ref{app:approxanalysis-local-Lipschitz} extends the analysis of
Section~\ref{sec:analysis} from globally Lipschitz maps to locally 
Lipschitz maps with controlled growth rates. Appendix~\ref{app:supportlemmas}
contains supporting lemmas that are used throughout the paper while
Appendix~\ref{app:b} proves an analyticity result pertaining to the
 solution map of the Poisson equation that is used in one of the 
numerical experiments in Section~\ref{sec:numerics}.

\section{Proposed Method}
\label{sec:method}

Our method combines PCA-based dimension reduction on the
input and output spaces $\X, \Y$ with a neural network that maps the dimension-reduced
spaces.  After a pre-amble in
Subsection \ref{sec:framework}, giving an overview of our approach, 
we continue in Subsection 
\ref{sec:functionalpca} with a description of PCA in the Hilbert space 
setting, including intuition about its approximation quality. 
Subsection \ref{sec:neuralnets} gives the background on neural networks
needed for this paper, and Subsection \ref{sec:representation} 
compares our methodology to existing methods.

\subsection{Overview}
\label{sec:framework}

Let \(\X\), \(\Y\) be separable Hilbert spaces
and \(\Psi: \X \to \Y\) be some, possibly nonlinear,
map. Our goal is to approximate \(\Psi\) from a finite collection of evaluations \(\{x_j,y_j\}_{j=1}^N\)
where \(y_j = \Psi(x_j)\). We assume that the $x_j$ are i.i.d. with respect to (w.r.t.) a probability measure  \( \mu\) 
supported on \(\X\). Note that with this notation 
the output samples $y_j$ are i.i.d. w.r.t. the
push-forward measure \(\Psi_\sharp\mu\). The approximation
of $\Psi$ from the data $\{x_j, y_j\}_{j=1}^N$ that we now develop
should be understood as being designed to be accurate with
respect to norms defined by integration
with respect to the measures $\mu$ and \(\Psi_\sharp\mu\) 
on the spaces \(\X\) and \(\Y\) respectively. 

Instead of attempting to directly approximate \(\Psi\), we first try to exploit 
possible finite-dimensional structure within the measures $\mu$ and $\Psi_\sharp \mu$. We accomplish this 
by approximating the identity mappings \(I_\X: \X \to \X\) and \(I_\Y: \Y \to \Y\) by a composition of two maps, 
known as the \textit{encoder} and the \textit{decoder} in the machine learning literature \cite{dimreduction,deeplearningbook},
which have finite-dimensional range and domain, respectively. We will then interpolate between the 
finite-dimensional outputs of the encoders, usually referred to as the \textit{latent codes}. Our approach is summarized in Figure \ref{fig:approach}.

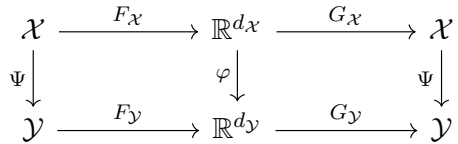
\begin{figure}
\centering
\begin{tikzpicture}
\begin{tikzcd}
\X \arrow[rr,"\FX"] \arrow[d,swap,"\Psi"] && \mathbb{R}^{\dX} \arrow[rr,"\GX"] \arrow[d,swap,"\varphi"] && \X \arrow[d,swap,"\Psi"] \\
\Y \arrow[rr,"\FY"] && \mathbb{R}^{\dY} \arrow[rr,"\GY"] && \Y
\end{tikzcd}
\end{tikzpicture}
\hspace*{6.2cm}
\caption{A diagram summarizing various maps of interest in our proposed approach for the approximation of input-output maps between infinite-dimensional spaces.} \label{fig:approach}
\end{figure}

Here, \(\FX\) and \(\FY\) are the encoders for the spaces $\X, \Y$ respectively, whilst \(\GX\) and \(\GY\) 
are the decoders, and \(\varphi\) is the map interpolating the latent codes. 
The intuition behind Figure \ref{fig:approach}, and, to some extent, the main focus of our analysis,
concerns the quality of the the approximations 
\begin{subequations}
\begin{align}
\GX \circ \FX  &\approx I_\X,   \label{eq:autoencodex} \\ 
\GY \circ \FY  &\approx I_\Y, \label{eq:autoencodey} \\  
\GY \circ \varphi \circ \FX   &\approx \Psi. \label{eq:approxpsi}
\end{align}
\end{subequations}
In order to achieve   \eqref{eq:approxpsi}
 it is  natural to choose $\varphi$ as  
\begin{equation}
\varphi:=\FY \circ \Psi \circ \GX; \label{eq:approxphi}
\end{equation}
then the approximation \eqref{eq:approxpsi} is limited only
by the approximations \eqref{eq:autoencodex}, \eqref{eq:autoencodey}
of the identity maps on $I_\X$ and $I_\Y$. We further label the approximation in \eqref{eq:approxpsi} by 
\begin{equation}
\Psip:=\GY \circ \varphi \circ \FX, \label{eq:apsipca}
\end{equation}
since we later choose PCA as our dimension reduction method. We note that \(\Psip\) is not used in practical computations  since $\varphi$ is generally 
unknown.  To make it practical 
we replace $\varphi$ with a data-driven 
approximation $\chi \approx \varphi$ obtaining,
\begin{equation}
\Psin:=\GY \circ \chi \circ \FX. \label{eq:apsipcn}
\end{equation}
Later we choose $\chi$ to be a neural network, hence the choice of the 
subscript $\scriptstyle{NN}$. 
The combination of PCA for the encoding/decoding 
along with the neural network approximation $\chi$ for $\varphi$, 
forms the basis of our computational methodology.

The compositions \(\GX \circ \FX\) and \(\GY \circ \FY\) are 
commonly referred to as \textit{autoencoders}.  There is a 
large literature on dimension-reduction methods 
\cite{originalpca,kernelpca,diffusionmaps,graphlaplaciandimreduce,dimreduction}
both classical and rooted in neural networks.
In this work, we will focus on PCA which is perhaps one of the simplest such methods known 
 \cite{originalpca}. We make this choice
due to its simplicity of implementation, excellent numerical performance on the
problems we  study in Section~\ref{sec:numerics}, and its amenability to analysis.
The dimension reduction in the input and output spaces is essential, as it allows
for function space algorithms that make use of powerful finite-dimensional
approximation methods, such as the neural networks we use here.

Many classical dimension reduction methods may be seen as encoders.
But not all are as easily inverted as PCA -- often there is 
no unambiguous, or no efficient, way to obtain the decoder.
Whilst neural network based methods such as deep 
autoencoders \cite{dimreduction} have shown empirical success
 in finite dimensional applications they currently lack 
theory and practical implementation in the setting of function spaces, and are 
therefore not currently suitable in the context of the goals of this paper.

Nonetheless methods other than PCA are likely to be useful within the
general goals of high or infinite-dimensional function approximation. Indeed, with PCA, we approximate the solution manifold (image space)
of the operator \(\Psi\) by the \textit{linear} space defined in equation \eqref{eq:pcasubspace}.
We emphasize however that, usually, \(\Psi\) is a nonlinear operator and our approximation 
succeeds by capturing the induced nonlinear input-output relationship within the latent codes by using a neural network.
We will show in Section \ref{sec:approxanalysis} that the approximation error of the linear space to the solution manifold goes to zero 
as the dimension increases, however, this decay may be very slow \cite{cohendevore2015,devore1998}. 
Therefore, it may be beneficial to construct nonlinear dimension reducing maps such as deep autoencoders on function spaces.
We leave this as an interesting direction for future work.

Regarding the approximation of $\varphi$ by neural
networks, we acknowledge  that there is considerable scope for the construction of the neural network, within different families
and types of networks, and potentially by using
other approximators. For our theory and numerics however we will focus on
 relatively constrained families of such networks, described in the following Subsection~\ref{sec:neuralnets}.

\subsection{PCA On Function Space}
\label{sec:functionalpca}

Since we will perform PCA on both $\X$ and $\Y$, and since PCA requires a Hilbert
space setting, the development here is in a generic real, separable
Hilbert space  \(\mathcal{H}\)  
with inner-product and norm denoted by \(\langle \cdot, \cdot \rangle\) 
and \(\|\cdot\|\) respectively.
We let \(\nu\) denote a probability measure
supported on \(\mathcal{H}\), and make the assumption of 
a finite fourth moment: \(\E_{u \sim \nu} \|u\|^4 < \infty\).  We denote by 
\(\{u_j\}_{j=1}^{N}\) a finite collection of $N$ i.i.d. draws from \(\nu\) that
will be used as the training data on which PCA is based. 
Later we apply the PCA methodology 
 in two distinct  settings where the space $\mathcal H$ is taken to be the input space $\X$ and the data 
 \(\{u_j\}\) are the input samples \(\{x_j\}\) drawn from the input measure $\mu$, or $\mathcal{H}$ is taken to be  the output space $\Y$ and the data $\{u_j\}$
 are the corresponding outputs
\(\{y_j = \Psi(x_j)\}\) drawn from the push-forward measure $\Psi_\sharp \mu$.
The following exposition, and the subsequent analysis 
in Section \ref{sec:pcaanalysis}, largely follows the works 
\cite{Blanchard2007,ShaweTaylor2,ShaweTaylor1}. We will consider the standard
version of non-centered PCA, although more sophisticated versions such as 
kernel PCA have been widely used and analyzed \cite{kernelpca} and could be
of potential interest within the overall goals of this work.  We choose to work 
in the non-kernelized setting as there is an unequivocal way of producing the decoder.

For any subspace \(V \subseteq \mathcal{H}\), denote by
\(\Pi_V : \mathcal{H} \to V\) the orthogonal projection operator and define the \textit{empirical projection error},
\begin{equation}
\label{eq:empiricalerror}
R_N(V) \coloneqq \frac{1}{N}\sum_{j=1}^N \|u_j - \Pi_V u_j\|^2.
\end{equation}
PCA consists of projecting the data onto a finite-dimensional subspace of \(\mathcal{H}\) for which this
error is minimal. To that end, consider the \textit{empirical, non-centered covariance} operator
\begin{equation}
\label{eq:empiricalcovariance}
C_N \coloneqq \frac{1}{N} \sum_{j=1}^N u_j \otimes u_j
\end{equation}
where \(\otimes\) denotes the outer product. It may be shown that 
\(C_N\) is a non-negative, self-adjoint, trace-class operator on 
$\mathcal{H}$, of rank at most \(N\) \cite{zeidler2012appliedfunc}. Let 
\(\phi_{1,N}, \dots \phi_{N,N}\) denote the eigenvectors of 
\(C_N\) and \(\lambda_{1,N} \geq \lambda_{2,N} \geq \dots \geq \lambda_{N,N} \geq 0\) 
its corresponding eigenvalues in decreasing order. Then for any $d \ge 1$ we define the \textit{PCA subspaces}
\begin{equation} 
\label{eq:pcasubspace}
V_{d,N} = \text{span} \{\phi_{1,N},\phi_{2,N},\dots,\phi_{d,N}\} \subset \mathcal{H}.
\end{equation}
It is well known \cite[Thm. 12.2.1]{murphybook} that \(V_{d,N}\) solves the minimization problem
\[\min_{V \in \mathcal{V}_d} R_N(V),\]
where \(\mathcal{V}_d\) denotes the set of all \(d\)-dimensional subspaces of \(\mathcal{H}\). Furthermore 
\begin{equation}
\label{eq:nt}
R_N(V_{d,N}) = \sum_{j=d+1}^N \lambda_{j,N},
\end{equation}
hence the approximation is controlled by the rate of decay of the spectrum of \(C_N\).

With this in mind, we define the \textit{PCA encoder}
\(\FH: \HH \to \R^d\) as the mapping from $\HH$ 
to the coefficients of the orthogonal projection onto \(V_{d,N}\) namely,
\begin{equation}
\label{eq:encoder}
\FH(u) = (\langle u, \phi_{1,N} \rangle, \dots, \langle u, \phi_{d,N} \rangle)^T \in \mathbb R^d.
\end{equation}
Correspondingly, the PCA decoder \(\GH: \R^d \to \HH\) constructs an 
element of \(\HH\) by taking as its input the coefficients constructed by \(\FH\) and forming an expansion
in the empirical basis by zero-padding the PCA basis coefficients, that is 
\begin{equation}
\label{eq:decoder}
\GH(\s) = \sum_{j=1}^d \s_j \phi_{j,N} \qquad \forall \s \in \R^d.
\end{equation}
In particular,
\begin{equation*}
(\GH \circ \FH)(u)  = \sum_{j=1}^d \langle u, \phi_{j,N} \rangle \phi_{j,N},\quad
\text{equivalently}\quad
\GH \circ \FH  = \sum_{j=1}^d \phi_{j,N} \otimes \phi_{j,N}.
\end{equation*}
Hence \(\GH \circ \FH = \Pi_{V_{d,N}}\), a \(d\)-dimensional approximation to the
identity \(I_{\HH}\).

We will now give a qualitative explanation of this 
approximation to be made quantitative in  Subsection \ref{sec:pcaanalysis}.
It is natural to consider minimizing the 
infinite data analog of \eqref{eq:empiricalerror}, namely the \textit{projection error}
\begin{equation}
\label{eq:projectionerror}
R(V) \coloneqq \E_{u \sim \nu}\|u - \Pi_V u\|^2,
\end{equation}
over $\mathcal{V}_d$ for $d \ge 1$. 
Assuming \(\nu\) has a finite second moment, there exists a unique, self-adjoint,
non-negative, trace-class operator \(C : \HH \to \HH\) termed the \textit{non-centered covariance} such that
$\langle v, Cz \rangle = \E_{u \sim \nu}[\langle v, u \rangle \langle z, u \rangle]$, $\forall v,z \in \HH$
(see \cite{Baxendale}).
From this, one readily finds the form of \(C\) by noting that
\begin{equation}
\label{eq:covaraince}
\langle v, \E_{u \sim \nu}[u \otimes u] z \rangle = \E_{u \sim \nu} [\langle v, (u \otimes u)z \rangle]
= \E_{u \sim \nu}[\langle v, u \rangle \langle z, u \rangle], 
\end{equation}
implying that
$C = \E_{u \sim \nu}[u \otimes u].$
Moreover, it follows that
\[\tr C = \E_{u \sim \nu} [\tr u \otimes u] = \E_{u \sim \nu} \|u\|^2 < \infty.\]

Let \(\phi_1,\phi_2,\dots\) denote the eigenvectors of \(C\) and \(\lambda_1 \geq \lambda_2 \geq \dots\) the
corresponding eigenvalues. In the infinite data setting $(N = \infty)$ it is natural to think of $C$
and its first $d$ eigenpairs as known.
We then define the \textit{optimal projection space}
\begin{equation}
\label{eq:optprojectionspace}
V_d = \text{span} \{\phi_1, \phi_2, \dots, \phi_d\}.
\end{equation} 
It may be verified that \(V_d\) solves the minimization problem
$\min_{V \in \mathcal{V}_d} R(V)$
and that 
$R(V_d) = \sum_{j=d+1}^\infty \lambda_j.$

With this infinite data perspective in mind observe that PCA makes the approximation 
\(V_{d,N} \approx V_d\) from a finite dataset. The approximation 
quality of \(V_{d,N}\) w.r.t. \(V_d\) is related to the approximation quality
of \(\phi_j\) by \(\phi_{j,N}\) for \(j=1,\dots,N\) and therefore to the approximation 
quality of \(C\) by \(C_N\). Another perspective is via the Karhunen-Loeve 
Theorem (KL) \cite{Lord}. For simplicity, assume that \(\nu\) is
mean zero, then \(u \sim \nu\) admits an  expansion of the form
$u = \sum_{j=1}^\infty \sqrt{\lambda_j} \xi_j \phi_j$
where \(\{\xi_j\}_{j=1}^\infty\) is a sequence of scalar-valued, mean zero, pairwise uncorrelated random variables.
We can then truncate this 
expansion and make the approximations
\[
u \approx \sum_{j=1}^d \sqrt{\lambda_j} \xi_j \phi_j \approx \sum_{j=1}^d \sqrt{\lambda_{j,N}} \xi_j \phi_{j,N},
\]
where the first approximation  corresponds to using the optimal projection subspace \(V_d\)
while the second approximation replaces 
\(V_d\) with \(V_{d,N}\).
Since it holds that \(\E C_N = C\), we expect \(\lambda_j \approx \lambda_{j,N}\) and \(\phi_j \approx \phi_{j,N}\). These discussions suggest that the quality of the PCA approximation
is controlled, on average, by the rate of decay of the eigenvalues of \(C\), and 
the approximation of the eigenstructure of $C$ by that of $C_N.$

\subsection{Neural Networks}
\label{sec:neuralnets}

A neural network is a nonlinear function  \(\chi : \R^n \to \R\) defined by a sequence of compositions 
of affine maps with point-wise nonlinearities. In particular, 
\begin{equation}\label{nn-general-form}
\tchi(s) = W_\Lt \sigma ( \dots \sigma ( W_2 \sigma (W_1 s + b_1) + b_2)) + b_\Lt, \qquad s \in \R^n,
\end{equation}
where \(W_1,\dots,W_\Lt\) are {\it weight matrices} (that are not necessarily square)
  and \(b_1,\dots,b_\Lt\) are vectors,
  referred to as {\it biases}. We refer to $\Lt \ge 1$ as the {\it depth of the neural network}.
The  function \(\sigma : \R^d \to \R^d\) is a monotone, nonlinear 
{\it activation function} that is defined from a monoton function \(\sigma : \R \to \R\) applied entrywise to any vector in $\R^d$ with $d \ge 1$.
  Note that in \eqref{nn-general-form} the input dimension of $\sigma$ may vary between layers
  but regardless of the input dimension the function $\sigma$ applies the same operations to
  all entries of the input vector. 
  We primarily consider the Rectified Linear Unit (ReLU) activation functions, i.e.,
  \begin{equation}\label{ReLU-def}
    \sigma(s) := (\max\{ 0, s_1\}, \max\{0, s_2\}, \dots, \max\{0, s_d\})^T \in
    \mathbb R^d\qquad \forall s \in \mathbb R^d.
  \end{equation}

The weights and biases constitute the parameters of the 
network. In this paper we learn these parameters in the following
standard way \cite{lecunnature}: given a set of data 
$\{x_j,y_j\}_{j=1}^N$ we choose the parameters of $\tchi$ to 
solve an appropriate regression problem by minimizing a data-dependent 
cost functional, using stochastic gradient methods.
Neural networks have been  demonstrated to constitute an efficient 
class of regressors and interpolators for high-dimensional problems empirically, 
but a complete theory of their efficacy is elusive. For an overview of various neural network 
architectures and their applications, see \cite{deeplearningbook}. 
For theories concerning their approximation capabilities 
see \cite{pinkus,yarotsky,schwab2019deep,devoredeep,kutyniok2019theoretical}.

For the approximation results given in Section \ref{sec:analysis}, we will 
work with a specific class of neural networks, following \cite{yarotsky}; we
note that other approximation schemes could be used, however, and that
we have chosen a proof setting that aligns with, but is not identical
to, what we implement in the computations described in 
Section \ref{sec:numerics}.
We will fix \(\sigma \in C(\R;\R)\) to be the ReLU   
function \eqref{ReLU-def} and consider the set of  neural networks mapping $\R^n$ to $\R$
\begin{equation*}
  \mathcal{M}(n; \Lt, \Jr) := \left \{ 
  \begin{aligned}
    & \tchi(s) = W_\Lt \sigma ( \dots \sigma ( W_2 \sigma (W_1 s + b_1) + b_2)) + b_\Lt \in \R,
    \\
 &  \text{for all } s\in \R^n \text{ and such that }  \sum_{k =1}^\Lt | W_k|_0 + | b_k|_0 \le  \Jr.
\end{aligned}
 \right \}
\end{equation*}
Here $| \cdot |_0$ gives the number of non-zero entries in a matrix so that
$\Jr \ge 0$ denotes the number of active  weights and biases
in the network while  $\Lt \ge 1$ is the total  number of layers.
Moreover, we define the class of stacked neural networks mapping $\R^n$ to $\R^m$:
\begin{equation*}
  \mathcal{M}(n, m; \Lt, \Jr) := \left \{ 
  \begin{aligned}
    & \chi(s) = (\chi^{(1)}(s), \dots, \chi^{(m)}(s) )^T  \in \R^m,
    \\
    &   \text{where }\chi^{(j)} \in \mathcal{M}(n; \Lt^{(j)}, \Jr^{(j)}),
    \text{ with } \Lt^{(j)} \le t, \Jr^{(j)} \le r.
\end{aligned}
 \right \}
\end{equation*}
From this, we build the set of \textit{zero-extended} neural networks
\begin{equation*}
  \mathcal{M}(n, m; \Lt, \Jr,  M) := \left \{ 
    \chi =
    \left\{
      \begin{aligned}
\tilde{\chi}(s), \quad & s \in [-M,M]^{n} \\
0, \quad & s \not \in [-M,M]^{n} 
\end{aligned}
\right\}, \text{  for some  } \tilde{\chi} \in \mathcal{M}( n, m, \Lt, \Jr)
 \right \},
\end{equation*}
where the new parameter
\(M > 0\) is the side length of the hypercube in $\R^n$ within which $\chi$ can be non-zero.
This construction is essential to our approximation as it allows us to handle non-compactness of the
latent spaces after 
PCA dimension reduction.

\subsection{Comparison to Existing Methods}
\label{sec:representation}

In the general setting of arbitrary encoders, the formula \eqref{eq:approxpsi} for
the approximation of $\Psi$ yields a complicated map, the representation of 
which depends on the dimension reduction methods being employed. However,
in the setting where PCA is used, a clear representation emerges which we now 
elucidate in order to highlight similarities and differences between our
methodology and existing methods appearing in the literature.

Let \(\FX :\X \to \R^{\dX}\) be the PCA encoder w.r.t. the data \(\{x_j\}_{j=1}^N\)
given by \eqref{eq:encoder} and,  in particular, let \(\phi^\X_{1,N},\dots,\phi^\X_{\dX,N}\) 
be the eigenvectors of the resulting empirical covariance. Similarly
let \(\phi^\Y_{1,N},\dots,\phi^\Y_{\dY,N}\) be the eigenvectors of the empirical covariance 
w.r.t. the data \(\{y\}_{j=1}^N\). For the function $\varphi$ defined
in \eqref{eq:approxphi}, or similarly for approximations $\chi$ thereof found through the use 
of neural networks,
we denote the components by
\(\varphi(\s) = (\varphi_1(\s), \dots, \varphi_{\dY}(\s))\)
for any \(s \in \R^{\dX}\). Then \eqref{eq:approxpsi} becomes
$\Psi(x) \approx \sum_{j=1}^{\dY} \alpha_j(x) \phi^\Y_{j,N}$
with coefficients
\[\alpha_j(x) = \varphi_j \big ( \FX(x) \big ) = \varphi_j
  \big ( \langle x, \phi^\X_{1,N} \rangle_\X,\dots, \langle x, \phi^\X_{\dX,N} \rangle_\X \big ), \qquad
\forall x \in \X.\]
 The solution data \(\{y\}_{j=1}^N\) fixes a basis for the
output space, and the dependence of \(\Psi(x)\) on \(x\) is captured solely 
via the scalar-valued coefficients \(\alpha_j\). This parallels the formulation
of the classical reduced basis method \cite{DeVoreReducedBasis}
where the approximation is written as 
\begin{equation}
\label{eq:thisa}
\Psi(x) \approx \sum_{j=1}^{m} \alpha_j(x) \phi_j.
\end{equation}
Many versions of the method exist, but two particularly popular ones 
are: (i) when \(m = N\) and \(\phi_j = y_j\); and (ii) when, as is 
done here, \(m=\dY\) and \(\phi_j = \phi^\Y_{j,N}\). The latter choice 
is also referred to as the reduced basis with a proper orthogonal decomposition.

The crucial difference between our method and the RBM
is in the formation of the coefficients \(\alpha_j\). In RBM these functions
 are obtained in an intrusive manner by approximating the PDE within the finite-dimensional
reduced basis and as a consequence the method cannot be used in a setting where 
a PDE relating inputs and outputs is not known, or may not exist. 
 In contrast, our proposed 
methodology approximates \(\varphi\) by regressing or interpolating 
the latent representations
$\{\FX (x_j), \FY(y_j)\}_{j=1}^N$.
Thus our proposed method makes use of the entire available dataset and does not require
explicit knowledge of the underlying PDE mapping, making it a non-intrusive method applicable
to black-box models.

The form \eqref{eq:thisa} of the approximate solution operator can also be related 
to the Taylor approximations developed in \cite{cohenalgo,cohenconv} where
a particular form of the input $x$ is considered, namely 
$x = \bar{x} + \sum_{j \geq 1} a_j \tx_j$
where \(\bar{x} \in \X\) is fixed, \(\{a_j\}_{j \geq 1} \in \ell^\infty(\mathbb{N};\R)\) 
are uniformly bounded, and \(\{\tx_j\}_{j \geq 1} \in \X\)
have some appropriate norm decay. Then, assuming that
the solution operator \(\Psi : \X \to \Y\) is 
analytic \cite{cohenanalytic}, it is possible to make use
of the Taylor expansion
\[\Psi(x) = \sum_{h \in \mathcal{F}} \alpha_h(x) \psi_h, \]
where \(\mathcal{F} = \{h \in \mathbb{N}^\infty : |h|_0 < \infty\}\) 
is the set of multi-indices and
\[\alpha_h(x) = \prod_{j \geq 1} a_j^{h_j} \in \R,
  \qquad \psi_h = \frac{1}{h!} \partial^h \Psi(0) \in \Y;\]
here the differentiation \(\partial^h\) is with respect to the sequence of coefficients \(\{a_j\}_{j \geq 1}\).  
Then $\Psi$ is  approximated  by truncating the Taylor expansion to a finite subset of \(\mathcal{F}\). 
For example this may be done recursively, by starting with \(h = 0\)
and building up the index set in a greedy manner.
The method is not data-driven, and requires knowledge of the PDE to
define equations to be solved for the $\psi_h$.


\section{Approximation Theory}
\label{sec:analysis}

In this section, we prove our main approximation result:
given any $\eps>0$, we can find an $\eps-$approximation $\Psin$ of $\Psi$.
We achieve this by making the appropriate choice of PCA truncation parameters, by choosing
sufficient amounts of data, and by choosing a sufficiently rich neural
network architecture to approximate $\varphi$ by $\chi.$  

In what follows we define
\(\FX\) to be a PCA encoder given by \eqref{eq:encoder}, using
the input data $\{ x_j\}_{j=1}^N$ drawn i.i.d. from $\mu$,
and \(\GY\) to be a PCA decoder given by
\eqref{eq:decoder}, using the data  $\{ y_j = \Psi(x_j) \}_{j=1}^N$.
We also define
\begin{align*}
\en(x) &= \|(\GY \circ \chi \circ \FX)(x) - \Psi(x)\|_Y\\
&=\|\Psin(x) - \Psi(x)\|_\Y.
\end{align*}
We prove the following theorem:

\begin{theorem}
\label{thm:limit}
Let \(\X\), \(\Y\) be real, separable Hilbert spaces and let \(\mu\) be a probability measure 
supported on \(\X\) such that \(\E_{x \sim \mu} \|x\|_\X^4 < \infty\). Suppose \(\Psi : \X \to \Y\) is a
$\mu$-measurable, globally Lipschitz map.
For any \(\epsilon > 0\), there are dimensions \(\dX = \dX(\epsilon) \in \mathbb{N}\),
\(\dY = \dY(\epsilon) \in \mathbb{N}\), a requisite amount of data
\(N = N(\dX, \dY) \in \mathbb{N}\), parameters $\Lt, \Jr, M$
depending on $\dX,\dY$ and $\epsilon$, and a zero-extended  
stacked neural network $\chi \in \mathcal{M}(\dX, \dY; \Lt, \Jr, M)$ such that
\[\E_{\{x_j\} \sim \mu} \E_{x \sim \mu}\bigl(\en(x)^2\bigr) < \epsilon.\]
\end{theorem}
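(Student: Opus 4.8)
The plan is to insert the intermediate map $\Psip=\GY\circ\varphi\circ\FX$ with $\varphi=\FY\circ\Psi\circ\GX$ and to split $\en$ by the triangle inequality into a ``neural network approximation'' contribution and a ``PCA truncation'' contribution,
\[
\en(x)\;\le\;\bigl\|\GY\bigl(\chi(\FX(x))\bigr)-\GY\bigl(\varphi(\FX(x))\bigr)\bigr\|_\Y\;+\;\bigl\|\Psip(x)-\Psi(x)\bigr\|_\Y .
\]
Since the empirical eigenvectors $\{\phi^\Y_{j,N}\}$ are orthonormal, $\GY$ is a linear isometry from $\R^{\dY}$ (Euclidean norm) into $\Y$, so the first term equals $|\chi(\FX(x))-\varphi(\FX(x))|$. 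For the second, write $P_\X:=\GX\circ\FX=\Pi_{V_{\dX,N}^\X}$ and $P_\Y:=\GY\circ\FY=\Pi_{V_{\dY,N}^\Y}$, so that $\Psip(x)=P_\Y\Psi(P_\X x)$; using that orthogonal projections are $1$-Lipschitz together with the global Lipschitz constant $L$ of $\Psi$,
\[
\bigl\|\Psip(x)-\Psi(x)\bigr\|_\Y\;\le\;L\,\bigl\|x-P_\X x\bigr\|_\X\;+\;\bigl\|\Psi(x)-P_\Y\Psi(x)\bigr\|_\Y .
\]
Squaring, using $(a+b+c)^2\le 3(a^2+b^2+c^2)$, and taking $\E_{\{x_j\}}\E_x$ reduces the theorem to making small three quantities: (i) $\E_{\{x_j\}}\E_x\|x-P_\X x\|_\X^2$; (ii) $\E_{\{x_j\}}\E_x\|\Psi(x)-P_\Y\Psi(x)\|_\Y^2$; (iii) $\E_{\{x_j\}}\E_x|\chi(\FX(x))-\varphi(\FX(x))|^2$.

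\textbf{PCA terms (i)--(ii).} These are precisely the expected empirical projection errors of $V_{\dX,N}^\X$ on $\X$ and of $V_{\dY,N}^\Y$ on $\Y$ analysed in Section~\ref{sec:pcaanalysis}. Because $\Psi$ is globally Lipschitz, $\|\Psi(x)\|_\Y\le\|\Psi(0)\|_\Y+L\|x\|_\X$, so the fourth-moment hypothesis on $\mu$ propagates to $\Psi_\sharp\mu$ and the PCA consistency results apply on both spaces. Given $\epsilon$, I first choose $\dX,\dY$ so large that the infinite-data projection errors $\sum_{j>\dX}\lambda^\X_j$ and $\sum_{j>\dY}\lambda^\Y_j$ are below a tolerance $\delta$ (possible since $\tr C<\infty$ on each space), and then choose $N=N(\dX,\dY)$ large enough --- invoking the Monte-Carlo convergence of $C_N$ to $C$ and the associated eigenspace perturbation bounds from Section~\ref{sec:pcaanalysis}, which is where the fourth moment is used --- that the two expected empirical errors are below $2\delta$.

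\textbf{Neural network term (iii).} Condition on the data $\{x_j\}$, so that $\FX,\GX,\FY,\GY$, hence $\varphi$, are fixed. Since $\GX$ is an isometry, $\FY$ is $1$-Lipschitz, and $\Psi$ is $L$-Lipschitz, the map $\varphi:\R^{\dX}\to\R^{\dY}$ is $L$-Lipschitz with $|\varphi(0)|\le\|\Psi(0)\|_\Y$; crucially, \emph{both bounds are independent of the realisation} $\{x_j\}$. Split the integrand according to whether $\FX(x)\in[-M,M]^{\dX}$. On the complement the zero-extended network gives $\chi(\FX(x))=0$, so the integrand is $|\varphi(\FX(x))|^2\le C(1+\|x\|_\X^2)$ (using $|\FX(x)|\le\|x\|_\X$); by Cauchy--Schwarz and Markov this contributes at most $C(1+\E_x\|x\|_\X^4)^{1/2}\,\Prob(\|x\|_\X>M)^{1/2}\le C'/M$, uniformly over datasets, which is $<\delta$ once $M$ is large. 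On the cube each component $\varphi_j$ is $L$-Lipschitz, so the ReLU approximation theorem of \cite{yarotsky} (after rescaling $[-M,M]^{\dX}$ to the unit cube) produces, for \emph{every} realisation, a stacked network $\tilde\chi$ with $\sup_{[-M,M]^{\dX}}|\tilde\chi-\varphi|<\sqrt{\delta}$ whose depth $\Lt$ and number of active weights $\Jr$ depend only on $\dX,\dY,M,L,\delta$, not on $\{x_j\}$; its zero extension $\chi$ lies in $\mathcal{M}(\dX,\dY;\Lt,\Jr,M)$ and bounds this part by $\delta\,\dY$. Summing the three contributions and choosing $\delta$ a suitable multiple of $\epsilon$ finishes the proof.

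\textbf{Main obstacle.} The technical heart lies in the PCA consistency behind (i)--(ii): controlling the \emph{expected} empirical projection error requires quantitative concentration of $C_N$ around $C$ in Hilbert--Schmidt norm (whence the fourth-moment assumption) together with a Davis--Kahan/$\sin\Theta$-type transfer from covariance closeness to subspace closeness, robust enough to survive the outer expectation $\E_{\{x_j\}}$; this is the content of Section~\ref{sec:pcaanalysis}. The remaining subtlety in the present argument is purely bookkeeping: the architecture parameters $\Lt,\Jr,M$ must be fixed independently of $\{x_j\}$ even though the fitted network $\chi$ and its target $\varphi$ are data-dependent, which is exactly what the dataset-independent Lipschitz and magnitude bounds on $\varphi$, and the pointwise bound $|\FX(x)|\le\|x\|_\X$, provide.
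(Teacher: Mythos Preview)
Your proposal is correct and follows essentially the same route as the paper: the same triangle-inequality split into a PCA truncation part (controlled via Theorem~\ref{thm:pca_generalization_bound}) and a neural-network part handled by Yarotsky's theorem on a cube $[-M,M]^{\dX}$ with zero extension outside, together with a Markov-type tail bound. Two cosmetic remarks: your tail argument via the deterministic inclusion $\{\FX(x)\notin[-M,M]^{\dX}\}\subset\{\|x\|_\X>M\}$ is in fact slightly cleaner than the paper's Lemma~\ref{lemma:bounded_projection}; and the PCA bound in Section~\ref{sec:pcaanalysis} is obtained via Fan's theorem rather than a Davis--Kahan/$\sin\Theta$ argument, though the result you invoke is the same.
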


\begin{remark}
This theorem is a consequence of Theorem \ref{thm:approximation}
which we state and prove below.
For clarity and ease of exposition we state and prove
Theorem \ref{thm:approximation} in a setting where \(\Psi\) is
globally Lipschitz. With a more stringent moment condition
on \(\mu\), the result can also be proven when $\Psi$
is locally Lipschitz; we state and prove
this result in Theorem \ref{thm:approximation-local-lip}.
\end{remark}

\begin{remark}
The neural network
\(\chi \in \mathcal{M}(\dX, \dY; \Lt, \Jr, M)\)
has maximum number of layers   $t \le c [ \log (M^2 \dY/ \epsilon) +1 ]$,
with the number of active weights and biases in each component of the network 
$r \le c (\epsilon/4M^2)^{-\dX/2}[ \log (  M^2 \dY/\epsilon) + 1]$, 
with an appropriate constant $c = c( \dX, \dY) \ge 0$ 
and support side-length \(M = M(\dX,\dY) > 0\).
These bounds on $t$ and $r$ follow from Theorem \ref{thm:approximation} 
with $\tau=\epsilon^{\frac12}.$
Note, however, that in order to achieve error $\epsilon$, the
dimensions $\dX, \dY$ must be chosen to grow as $\epsilon \to 0$; 
thus the preceding statements
do not explicitly quantify the needed number of parameters, and depth,
for error $\epsilon$; to do so would require quantifying the dependence
of $c,M$ on $\dX,\dY$ (a property of neural networks) and the dependence
of  $\dX,\dY$ on $\epsilon$ (a property of the measure $\mu$ and spaces
$\X,\Y$ -- see Theorem \ref{thm:pca_generalization_bound}).
The theory in \cite{yarotsky}, which we 
employ for the existence result for the neural network
produces the constant \(c\) which depends on the dimensions 
\(\dX\) and \(\dY\) in an unspecified way. 
\end{remark}


The double expectation reflects averaging over all possible 
new inputs $x$ drawn from $\mu$ (inner expectation) and over 
all possible realizations of the i.i.d. dataset $\{ x_j, y_j = \Psi(x_j) \}_{j=1}^N$ (outer expectation).
The theorem as stated above is a consequence of Theorem \ref{thm:approximation} 
in which the error is broken into multiple components that are then bounded separately.
Note that the theorem does not address the question of whether the
optimization technique used to fit the neural network actually finds the
choice which realizes the theorem; this gap between theory and
practice is difficult to overcome, because of the non-convex nature of
the training problem, and is a standard feature of theorems in this
area \cite{kutyniok2019theoretical,schwab2019deep}.

The idea of the proof is to quantify the approximations \(\GX \circ \FX \approx I_\X\) and 
\(\GY \circ \FY \approx I_\Y\) and $\chi \approx \varphi$ so that $\Psin$
given by \eqref{eq:apsipcn} is close to $\Psi.$ 
The first two approximations, which show that $\Psip$ given by
\eqref{eq:apsipca} is close to $\Psi,$ are studied in Subsection \ref{sec:pcaanalysis} 
(see Theorem \ref{thm:pca_generalization_bound}). 
Then, in Subsection \ref{sec:approxanalysis}, we find a neural network 
\(\chi\) able to approximate \(\varphi\) to the desired level of accuracy;
this fact is part of the proof of Theorem \ref{thm:approximation}. 
The zero-extension of the neural network arises from the fact that 
we employ a density theorem for a class of neural networks within 
continuous functions defined on compact sets.  Since we cannot 
guarantee that \(\FX\) is bounded, we simply set the neural network output to zero 
on the set outside a hypercube with side-length \(2M\). We then use the fact that 
this set has small $\mu$-measure, for  sufficiently large $M.$

\subsection{PCA And Approximation}
\label{sec:pcaanalysis}
We work in the general notation and setting of Subsection \ref{sec:functionalpca}
so as to obtain approximation results  that are applicable to both using PCA on the 
inputs and on the outputs. 
In addition, denote by \((\HS, \langle \cdot, \cdot \rangle_{HS}, \|\cdot\|_{HS})\) 
the space of Hilbert-Schmidt operators over \(\mathcal{H}\).  We are now ready
to state the main result of this subsection.  Our goal is to control 
the projection error \(R(V_{d,N})\) when using the finite-data PCA subspace in place of the 
optimal projection space since the PCA subspace is 
what is available in practice. Theorem \ref{thm:pca_generalization_bound} accomplishes this by bounding the error \(R(V_{d,N})\) by the optimal error \(R(V_d)\) plus a 
term related to the approximation \(V_{d,N} \approx V_d\).
While previous results such as \cite{Blanchard2007,ShaweTaylor2,ShaweTaylor1} 
focused on bounds for the excess error in probability w.r.t. the data, 
we present bounds in expectation, averaging over the data. 
Such bounds are weaker, but allow us to remove strict conditions on the data 
distribution to obtain more general results; for example, our theory 
allows for \(\nu\) to be a Gaussian measure.

\begin{theorem}
\label{thm:pca_generalization_bound}
Let \(R\) be given by \eqref{eq:projectionerror} and \(V_{d,N}\), \(V_d\) by \eqref{eq:pcasubspace}, \eqref{eq:optprojectionspace} respectively. Then there 
exists a constant \(Q \geq 0\), depending only on the data generating measure \(\nu\), such that
\[\E_{\{u_j\} \sim \nu} [R(V_{d,N})] \leq \sqrt{\frac{Qd}{N}} + R(V_d),\]
where the expectation is over the dataset $\{ u_j \}_{j=1}^N \stackrel{iid}{\sim} \nu$.
\end{theorem}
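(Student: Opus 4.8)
The plan is to run the standard empirical-risk-minimization argument, exploiting that $V_{d,N}$ minimizes the empirical projection error $R_N$ over \emph{all} $d$-dimensional subspaces while $V_d$ minimizes the population error $R$, and that both errors have clean expressions in terms of the covariance operators $C_N$ and $C$. This is in the spirit of \cite{Blanchard2007,ShaweTaylor1,ShaweTaylor2}, but the goal here is a bound in expectation over the data rather than in probability.

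First I would record the basic operator identities. Since $\Pi_V$ is an orthogonal projection, $\|u - \Pi_V u\|^2 = \|u\|^2 - \|\Pi_V u\|^2$, and $\E_{u\sim\nu}\|\Pi_V u\|^2 = \E_{u\sim\nu}\langle \Pi_V u, u\rangle = \tr(\Pi_V C)$, with the analogous identity for $C_N$, so that
\[
R(V) = \tr\bigl((I - \Pi_V) C\bigr), \qquad R_N(V) = \tr\bigl((I - \Pi_V) C_N\bigr);
\]
these traces are well defined because $C$ and $C_N$ are trace-class (hence Hilbert--Schmidt), as recalled in Subsection~\ref{sec:functionalpca}. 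Subtracting gives $R(V) - R_N(V) = \tr(C - C_N) - \tr\bigl(\Pi_V(C - C_N)\bigr)$. Then I decompose the excess error and use optimality of $V_{d,N}$ for $R_N$:
\[
R(V_{d,N}) - R(V_d) = \bigl[R(V_{d,N}) - R_N(V_{d,N})\bigr] + \underbrace{\bigl[R_N(V_{d,N}) - R_N(V_d)\bigr]}_{\le\, 0} + \bigl[R_N(V_d) - R(V_d)\bigr].
\]
Substituting the identity above into the first and third brackets, the scalar $\tr(C - C_N)$ appears twice with opposite signs and cancels, leaving
\[
R(V_{d,N}) - R(V_d) \;\le\; \tr\bigl((\Pi_{V_d} - \Pi_{V_{d,N}})(C - C_N)\bigr) \;=\; \langle \Pi_{V_d} - \Pi_{V_{d,N}},\, C - C_N \rangle_{HS}.
\]
This cancellation is the crux: it produces a bound scaling like $\sqrt d$ rather than like $d$, and it means we never need to control fluctuations of $\tr C_N$ about $\tr C$.

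To finish, apply Cauchy--Schwarz in the Hilbert--Schmidt inner product together with $\|\Pi_{V_d}\|_{HS} = \|\Pi_{V_{d,N}}\|_{HS} = \sqrt d$ (both are rank-$d$ orthogonal projections) to get the pathwise estimate $R(V_{d,N}) - R(V_d) \le 2\sqrt d\,\|C - C_N\|_{HS}$. Taking expectation over the sample and using Jensen reduces matters to bounding $\E\|C_N - C\|_{HS}^2$; writing $C_N - C = \frac1N\sum_{j=1}^N(u_j\otimes u_j - C)$ as an average of i.i.d. mean-zero Hilbert--Schmidt random variables yields $\E\|C_N - C\|_{HS}^2 = \frac1N \E\|u\otimes u - C\|_{HS}^2 \le \frac1N\E\|u\otimes u\|_{HS}^2 = \frac1N\E_{u\sim\nu}\|u\|^4$, which is finite by the fourth-moment hypothesis. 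The theorem then follows with $Q := 4\,\E_{u\sim\nu}\|u\|^4$, which depends only on $\nu$.

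The main obstacle is not any single hard estimate but the infinite-dimensional operator bookkeeping: one must confirm that $(I - \Pi_V)C$ is trace-class even though $I - \Pi_V$ is not, that the Hilbert--Schmidt inner products and Cauchy--Schwarz step are legitimate for the operators involved, and that $\Pi_{V_{d,N}}$ depends measurably on the sample so that the expectations are meaningful. One should also note that the minimizer $V_{d,N}$ need not be unique when eigenvalues of $C_N$ coincide, but this is harmless since the argument only uses that $V_{d,N}$ is \emph{some} minimizer of $R_N$ over $\mathcal{V}_d$. With these points settled, the computation is as above.
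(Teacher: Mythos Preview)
Your argument is correct and complete, but it takes a somewhat different route from the paper's. You use the standard three-term ERM decomposition
\[
R(V_{d,N}) - R(V_d) = [R(V_{d,N}) - R_N(V_{d,N})] + [R_N(V_{d,N}) - R_N(V_d)] + [R_N(V_d) - R(V_d)],
\]
drop the middle term by optimality, and observe that the $\tr(C - C_N)$ contributions in the outer two terms cancel \emph{pathwise}, leaving $\langle \Pi_{V_d} - \Pi_{V_{d,N}}, C - C_N\rangle_{HS}$. The paper instead splits $\E R(V_{d,N})$ into $\E[R(V_{d,N}) - R_N(V_{d,N})]$ plus $\E R_N(V_{d,N})$; for the first it uses $\E C_N = C$ so that the trace term vanishes only \emph{in expectation}, giving $\E\langle \Pi_{V_{d,N}}, C_N - C\rangle_{HS} \le \sqrt d\,\sqrt{\E\|C_N - C\|_{HS}^2}$, and for the second it invokes Fan's theorem (Lemma~\ref{thm:fan}) to show $\E R_N(V_{d,N}) = \E\sum_{j>d}\lambda_{j,N} \le \sum_{j>d}\lambda_j = R(V_d)$. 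Your approach is more elementary in that it never needs Fan's theorem explicitly (only the optimality of $V_{d,N}$ for $R_N$, already recorded in Subsection~\ref{sec:functionalpca}), and it yields a pathwise inequality $R(V_{d,N}) - R(V_d) \le 2\sqrt d\,\|C_N - C\|_{HS}$ that could be upgraded to a high-probability bound via concentration. The price is a worse constant: you get $Q = 4\,\E\|u\|^4$, whereas the paper's route gives $Q = \E\|u\otimes u - C\|_{HS}^2 = \E\|u\|^4 - \|C\|_{HS}^2$, better by more than a factor of four.
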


The proof generalizes that employed in \cite[Thm. 3.1]{Blanchard2007}.
We first find a bound on the average excess error 
\(\E [R(V_{d,N}) - R_N (V_{d,N})]\) using Lemma \ref{lemma:convariancemontecarlo}.
Then using Fan's Theorem \cite{Fan} (Lemma \ref{thm:fan}), 
we bound the average sum of the tail eigenvalues of \(C_N\) by the sum of the 
tail eigenvalues of \(C\), in particular, \(\E [R_N (V_{d,N})] \leq R(V_d) \).

\begin{proof}
  For brevity we simply write $\E$ instead of $\E_{\{u_j\} \sim \nu}$ throughout the proof.
For any subspace \(V \subseteq \mathcal{H}\), we have
\begin{align*}
R(V) &= \E_{u \sim \nu} [\|u\|^2 -  2 \langle u, \Pi_V u \rangle + \langle \Pi_V u, \Pi_V u \rangle]= \E_{u \sim \nu} [\tr (u \otimes u) -  \langle \Pi_V u , \Pi_V u \rangle ] \\
&= \E_{u \sim \nu} [\tr (u \otimes u)- \langle \Pi_V, u \otimes u \rangle_{HS}] = \tr C - \langle \Pi_V, C \rangle_{HS}
\end{align*}
where we used two properties of the fact that \(\Pi_V\) is an 
orthogonal projection operator, namely \(\Pi_V^2 = \Pi_V = \Pi_V^*\) and
\[\langle \Pi_V, v \otimes z \rangle_{HS} = \langle v, \Pi_V z \rangle = \langle \Pi_V v, \Pi_V z \rangle
  \quad \forall v,z \in \mathcal{H}.\]
Repeating the above arguments for $R_N(V)$ in place of $R(V)$, with the expectation replaced
by the empirical average, yields
$R_N(V) = \tr C_N - \langle \Pi_V, C_N \rangle_{HS}.$
By noting that \(\E[C_N] = C\) we then write
\begin{align*}
  \E [R(V_{d,N}) - R_N(V_{d,N})]
 &  = \E \langle \Pi_{V_{d,N}}, C_N - C \rangle_{HS}
  \leq \sqrt{d} \: \E \|C_N - C\|_{HS}  \\
  & \leq \sqrt{d} \sqrt{\E \|C_N - C\|^2_{HS}}
\end{align*}
where we used
 Cauchy-Schwarz twice along with the fact that 
\(\|\Pi_{V_{d,N}}\|_{HS} = \sqrt{d}\) since
\(V_{d,N}\) is \(d\)-dimensional.
Now by Lemma \ref{lemma:convariancemontecarlo}, which quantifies the Monte Carlo
error between $C$ and $C_N$ in the Hilbert-Schmidt norm, we have that
\[\E [R(V_{d,N}) - R_N(V_{d,N})] \leq   \sqrt{\frac{Qd}{N}},\]
for   
a constant \(Q \geq 0\).
Hence by\eqref{eq:nt}, 
\begin{align*}
\E [R(V_{d,N})]&\leq  \sqrt{\frac{Qd}{N}} + \E \sum_{j=d+1}^N \lambda_{j,N}.
\end{align*}
It remains to estimate the second term  above.
Letting \(S_d\) denote the set of subspaces of \(d\) orthonormal elements in \(\mathcal{H}\),
 Fan's Theorem (Proposition~\ref{thm:fan}) gives
\begin{align*}
  \sum_{j=1}^d \lambda_j
  &= \max_{\{v_1,\dots,v_d\} \in S_d} \sum_{j=1}^d \langle Cv_j, v_j \rangle
   = \max_{\{v_1,\dots,v_d\} \in S_d} \E_{u \sim \nu} \sum_{j=1}^d |\langle u, v_j \rangle|^2 \\
&= \max_{\{v_1,\dots,v_d\} \in S_d} \E_{u \sim \nu} \sum_{j=1}^d \|\Pi_{\text{span}\{v_j\}} u\|^2 
= \max_{V \in \mathcal{V}_d} \E_{u \sim \nu} \|\Pi_V u\|^2 \\
&= \E_{u \sim \nu} \|u\|^2 - \min_{V \in \mathcal{V}_d} E_{u \sim \nu} \|\Pi_{V^\perp} u\|^2.
\end{align*}
Observe that $\sum_{j=1}^\infty \lambda_j = \tr C  = \E_{u \sim \nu} \|u\|^2$
and so 
\[\sum_{j=d+1}^\infty \lambda_j = \E_{u \sim \nu} \|u\|^2  - \sum_{j=1}^d \lambda_j
  = \min_{V \in \mathcal{V}_d} \E_{u \sim \nu} \|\Pi_{V^\perp} u\|^2.\]
We now repeat the above calculations  for $\lambda_{j,N}$, the eigenvalues of $C_N$, by replacing
the expectation with the empirical average to obtain
\[\sum_{j=d+1}^N \lambda_{j,N} = \min_{V \in \mathcal{V}_d} \frac{1}{N} \sum_{k=1}^N \|\Pi_{V^\perp} u_k\|^2,\]
and so  
\begin{align*}
\E \sum_{j=d+1}^N \lambda_{j,N} &\leq \min_{V \in \mathcal{V}_d} \E \frac{1}{N} \sum_{k=1}^N \|\Pi_{V^\perp} u_k\|^2 
= \min_{V \in \mathcal{V}_d} \E_{u \sim \mu} \|\Pi_{V^\perp} u\|^2 
= \sum_{j=d+1}^\infty \lambda_j.
\end{align*}
Finally, we conclude that
\begin{align*}
\E [R(V_{d,N})] \leq  \sqrt{\frac{Qd}{N}} + \sum_{j=d+1}^\infty \lambda_j
= \sqrt{\frac{Qd}{N}} + R(V_d).
\end{align*}

\end{proof}

\subsection{Neural Networks And Approximation}
\label{sec:approxanalysis}
In this subsection we study the approximation of $\varphi$ given in
\eqref{eq:approxphi} by neural networks,
combining the analysis with results from the preceding subsection to prove
our main approximation result, Theorem \ref{thm:approximation}.
We will work in the notation of Section \ref{sec:method}. 
We assume that \((\X, \langle \cdot, \cdot \rangle_\X, \|\cdot\|_\X) \) and \((\Y, \langle \cdot, \cdot \rangle_\Y, \|\cdot\|_\Y) \) 
are real, separable Hilbert spaces;
\(\mu\) is a probability measure supported on \(\X\) with a finite fourth moment 
\(\E_{x \sim \mu} \|x\|_\X^4 < \infty\), and \(\Psi: \X \to \Y\) is measurable and globally \(L\)-Lipschitz:
there exists a constant $L>0$ such that 
\[\forall x,z \in \X\quad \|\Psi(x) - \Psi(z)\|_\Y \leq L \|x-z\|_\X.\]
Note that this implies that \(\Psi\) is linearly bounded: for any \(x \in \X\)
\[\|\Psi(x)\|_\Y \le  \|\Psi(0)\|_\Y + \|\Psi(x) - \Psi(0)\|_\Y \leq \|\Psi(0)\|_\Y + L\|x\|_\X.\]
Hence we deduce existence of the fourth moment of the pushforward \(\Psi_\sharp \mu\):
\[
\E_{y \sim \Psi_\sharp \mu} \|y\|^4_\Y = \int_\X \|\Psi(x)\|_\Y^4 d\mu(x) \leq \int_\X (\|\Psi(0)\|_\Y + L \|x\|_\X)^4 d\mu(x) < \infty
\]
since we assumed \(\E_{x \sim \mu} \|x\|^4_\X < \infty\).

Let us recall some of the notation from Subsections \ref{sec:functionalpca} and \ref{sec:representation}.
Let \(V^\X_{\dX}\) be the \(\dX\)-dimensional optimal projection space
given by \eqref{eq:optprojectionspace} for the measure \(\mu\) and \(V^\X_{\dX,N}\) be the \(\dX\)-dimensional
PCA subspace given by \eqref{eq:pcasubspace} with respect to the input dataset \(\{x_j\}_{j=1}^N\). Similarly 
let \(V^\Y_{\dY}\) be the \(\dY\)-dimensional optimal projection space
for the pushforward measure \(\Psi_\sharp \mu\) and \(V^\Y_{\dY,N}\) be the \(\dY\)-dimensional
PCA subspace with respect to the output dataset \(\{y_j = \Psi(x_j)\}_{j=1}^N\). We then define the input
PCA encoder
\(\FX : \X \to \R^{\dX}\) by \eqref{eq:encoder} and the input PCA decoder \(\GX: \R^{\dX} \to \X\)
by \eqref{eq:decoder} both with respect to the orthonormal basis used to construct \(V^\X_{\dX,N}\).
Similarly we define the output PCA encoder \(\FY : \Y \to \R^{\dY}\) and decoder \(\GY : \R^{\dY} \to \Y\)
with respect to the orthonormal basis used to construct \(V^\Y_{\dY,N}\). Finally
we recall \(\varphi : \R^{\dX} \to \R^{\dY}\) the map connecting the two latent 
spaces  defined in \eqref{eq:approxphi}. 
The approximation \(\Psip\) to \(\Psi\) based only on the PCA encoding and decoding
is given by \eqref{eq:apsipca}. 
In the following theorem, we prove the existence of a neural network giving an 
\(\epsilon\)-close approximation to \(\varphi\) for fixed latent code dimensions $\dX, \dY$
and quantify the error of 
the full approximation \(\Psin\), given in \eqref{eq:apsipcn}, to \(\Psi\).
We will be explicit  about which measure the projection error is defined with 
respect to. In particular, we will write \eqref{eq:projectionerror} as 
\[R^\mu(V) = \E_{x \sim \mu} \|x - \Pi_V x\|^2_\X\]
for any subspace \(V \subseteq \X\) and similarly
\[R^{\Psi_\sharp \mu}(V) = \E_{y \sim \Psi_\sharp \mu} \|y - \Pi_V y\|^2_\Y\]
for any subspace \(V \subseteq \Y\).

\begin{theorem}
\label{thm:approximation}
Let \(\X\), \(\Y\) be real, separable Hilbert spaces and let \(\mu\) be a probability measure
supported on \(\X\) such that \(\E_{x \sim \mu} \|x\|_\X^4 < \infty\). Suppose \(\Psi : \X \to \Y\) is a
$\mu$-measurable, globally Lipschitz map.
Fix $\dX, \dY$, $N \ge \max\{\dX, \dY\},$ $\delta \in (0,1)$ and $\tau>0$.
Define \(M = \sqrt{\E_{x \sim \mu} \|x\|_\X^2 / \delta}\). Then 
there exists a constant $c = c(\dX, \dY) \ge 0$ and a zero-extended stacked neural network
\(\chi \in \mathcal{M}(\dX, \dY; t, r,M)\)
with $t \le c(\dX,\dY) [  \log ( M \sqrt{\dY}/ \tau) +1 ]$ and
$r \le c(\dX,\dY)  (\tau/2M)^{-\dX}[ \log (M \sqrt{\dY}/\tau) + 1]$, 
so that 
\begin{equation}
\label{eq:approximationbound}
\begin{aligned}
\E_{\{x_j\} \sim \mu} \E_{x \sim \mu}\bigl(\en(x)\bigr)^2 \le C \Bigg(  \tau^2 + \sqrt{\delta}
  + \sqrt{\frac{\dX}{N}} + R^{\mu}(V_{\dX}^{\X}) +  \sqrt{\frac{\dY}{N}} + R^{\Psi_\sharp \mu}(V_{\dY}^{\Y}) \Bigg),
\end{aligned}
\end{equation}
where 
$C > 0$ is independent of $\dX, \dY, N, \delta$ and $\tau$.
\end{theorem}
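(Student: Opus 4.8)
The plan is to decompose the error $\en(x) = \|\Psin(x) - \Psi(x)\|_\Y$ via a telescoping insertion of intermediate objects, bound each piece separately, and then assemble. The natural intermediate is $\Psip = \GY \circ \varphi \circ \FX$ from \eqref{eq:apsipca}, together with the ``unprojected'' reference $\Psi$ itself. Thus I would write
\[
\en(x) \le \underbrace{\|\Psin(x) - \Psip(x)\|_\Y}_{\text{neural-net error}} + \underbrace{\|\Psip(x) - \Psi(x)\|_\Y}_{\text{PCA error}},
\]
square, use $(a+b)^2 \le 2a^2 + 2b^2$, and take the double expectation $\E_{\{x_j\}}\E_x$. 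The second term is exactly what Theorem~\ref{thm:pca_generalization_bound} is designed to control: since $\Psip = \GY \circ \FY \circ \Psi \circ \GX \circ \FX$ when $\varphi$ is given by \eqref{eq:approxphi}, and $\GH \circ \FH = \Pi_{V_{d,N}}$, one has $\Psip(x) = \Pi_{V^\Y_{\dY,N}}\Psi(\Pi_{V^\X_{\dX,N}} x)$; I would bound $\|\Psip(x)-\Psi(x)\|_\Y$ by a triangle inequality through $\Psi(\Pi_{V^\X_{\dX,N}}x)$, using global Lipschitzness to trade the input projection error $\|x - \Pi_{V^\X_{\dX,N}} x\|_\X$ for an output error via the constant $L$, and then applying Theorem~\ref{thm:pca_generalization_bound} to both the input covariance (measure $\mu$) and the output covariance (measure $\Psi_\sharp \mu$, whose fourth moment was verified just above the theorem). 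This produces the $\sqrt{\dX/N} + R^\mu(V^\X_{\dX}) + \sqrt{\dY/N} + R^{\Psi_\sharp\mu}(V^\Y_{\dY})$ contributions.

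For the neural-network term, note $\Psin(x) - \Psip(x) = \GY(\chi(\FX(x)) - \varphi(\FX(x)))$, and since $\GY$ maps $\R^{\dY}$ into $\Y$ by taking an orthonormal combination of the $\phi^\Y_{j,N}$, it is an isometry from $(\R^{\dY},|\cdot|)$ into $\Y$; hence $\|\Psin(x)-\Psip(x)\|_\Y = |\chi(\FX(x)) - \varphi(\FX(x))|$. So it suffices to approximate $\varphi:\R^{\dX}\to\R^{\dY}$ by a stacked, zero-extended ReLU network on the cube $[-M,M]^{\dX}$, with error $\tau$ in the region where $\FX(x)$ lands, plus a contribution from the event $\{\FX(x)\notin[-M,M]^{\dX}\}$ where $\chi$ is set to zero. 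The choice $M = \sqrt{\E_{x\sim\mu}\|x\|_\X^2/\delta}$ is made precisely so that, by Markov/Chebyshev applied to $|\FX(x)| \le \|x\|_\X$ (PCA coefficients are dominated by the norm), this bad event has probability $\le \delta$; on it, $\|\Psin(x)-\Psip(x)\|_\Y = \|\Psip(x)\|_\Y$, which one controls in $L^2$ using the linear boundedness of $\Psi$ and the finite second moment of $\mu$, yielding a term of order $\sqrt{\delta}$ after taking square roots (or $\delta$ before — I would track constants to land on $\sqrt{\delta}$ as in \eqref{eq:approximationbound}). For the approximation of $\varphi$ on the compact cube I would invoke the quantitative ReLU density result of Yarotsky \cite{yarotsky}: $\varphi$ is continuous (indeed Lipschitz, being a composition of the Lipschitz $\Psi$ with the bounded linear maps $\FY, \GX$ and the encoder $\FX$), so for any $\tau>0$ there is a network in $\mathcal{M}(\dX,\dY;t,r)$ with the stated depth $t \lesssim \log(M\sqrt{\dY}/\tau)$ and sparsity $r \lesssim (\tau/2M)^{-\dX}\log(M\sqrt{\dY}/\tau)$ achieving sup-norm error $\tau$ on $[-M,M]^{\dX}$ componentwise; the $\sqrt{\dY}$ enters from passing from componentwise to Euclidean norm over the $\dY$ stacked outputs. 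Zero-extending this network outside the cube gives the required $\chi \in \mathcal{M}(\dX,\dY;t,r,M)$.

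Assembling, the double expectation of $\en(x)^2$ is bounded by $2\,\E\E(\text{neural-net})^2 + 2\,\E\E(\text{PCA})^2 \le C(\tau^2 + \sqrt{\delta} + \sqrt{\dX/N} + R^\mu(V^\X_{\dX}) + \sqrt{\dY/N} + R^{\Psi_\sharp\mu}(V^\Y_{\dY}))$ with $C$ absorbing $L$, $\|\Psi(0)\|_\Y$, the second and fourth moments of $\mu$, and the constant $Q$ from Theorem~\ref{thm:pca_generalization_bound}; crucially $C$ does not depend on $\dX,\dY,N,\delta,\tau$ (the $\dX,\dY$-dependence is quarantined inside the network constant $c(\dX,\dY)$ governing $t,r$, not inside the error bound). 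I expect the main obstacle to be the bookkeeping around non-compactness: making rigorous that $\FX(x)$ is controlled by $\|x\|_\X$ (so the cube-exit probability is genuinely $\le\delta$), and simultaneously controlling $\|\Psip(x)\|_\Y$ on that exit event in $L^2(\mu)$ while keeping all constants independent of the dimensions — this is where the fourth-moment hypothesis and the interplay between the inner ($x\sim\mu$) and outer ($\{x_j\}\sim\mu$) expectations must be handled with care, since the PCA subspaces $V^\X_{\dX,N}, V^\Y_{\dY,N}$ (hence $\FX$, $\GY$, and $\varphi$ itself) are random, depending on the training data. One must ensure the neural network $\chi$ can be taken to depend on the data (it approximates the data-dependent $\varphi$) while the final bound, after the outer expectation, still collapses to the clean form \eqref{eq:approximationbound}.
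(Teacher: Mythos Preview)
Your proposal is correct and matches the paper's proof: telescope through $\Psip$, bound the PCA piece via Theorem~\ref{thm:pca_generalization_bound} on both $\mu$ and $\Psi_\sharp\mu$, and bound the neural-network piece by splitting over $\{\FX(x)\in[-M,M]^{\dX}\}$, invoking Yarotsky on the cube and controlling $|\varphi(\FX(x))|^2$ on the complement via Cauchy--Schwarz and the fourth moment. One minor point: the paper inserts the intermediate $\Pi_{V^\Y_{\dY,N}}\Psi(x)$ rather than your $\Psi(\Pi_{V^\X_{\dX,N}}x)$, which directly yields $R^{\Psi_\sharp\mu}(V^\Y_{\dY,N})$ for the output-side term; your intermediate produces $\|(I-\Pi_{V^\Y_{\dY,N}})\Psi(\Pi_{V^\X_{\dX,N}}x)\|_\Y$, which is not literally that quantity (since $\Psi(\Pi_{V^\X_{\dX,N}}x)$ is not $\Psi_\sharp\mu$-distributed) but reduces to it after one further triangle inequality using $\|I-\Pi\|\le 1$.
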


The first two terms on the r.h.s. arise from the neural network approximation of \(\varphi\) while the 
last two pairs of terms are from the finite-dimensional approximation of \(\X\) and \(\Y\) respectively 
as prescribed by Theorem \ref{thm:pca_generalization_bound}.
The way to interpret the result is as follows: first choose $\dX, \dY$ so that
$R^{\mu}(V_{\dX}^{\X})$ and $R^{\Psi_\sharp \mu}(V_{\dY}^{\Y})$ are 
small -- these are intrinsic properties of the measures $\mu$ and $\Psi_\sharp \mu$; 
secondly, choose the amount of data $N$ large enough to make 
$\max\{\dX, \dY\}/N$ small, essentially controlling how well we
approximate the intrinsic covariance structure of $\mu$ and $\Psi_\sharp \mu$ using samples; 
thirdly choose $\delta$ small enough to control the error arising from restricting the domain of 
$\varphi$; and finally choose $\tau$ sufficiently small to control the 
approximation of $\varphi$ by a neural network restricted to a compact set.
Note that the size and values of the parameters of the neural network \(\chi\) will depend on the choice of
 \(\delta\) as well as $\dX, \dY$ and $N$ in a manner which we do not specify.
In particular, the dependence of  $c$ on $\dX, \dY$ is not
explicit in the theorem of \cite{yarotsky} which furnishes the
existence of the requisite neural network $\chi.$
The parameter \(\tau\) specifies the
error tolerance between \(\chi\) and \(\varphi\) on \([-M,M]^{\dX}\). 
Intuitively, as \((\delta,\tau) \to 0\), we expect  the number of 
parameters in the network to also grow \cite{pinkus}. Quantifying this growth
would be needed to fully understand the computational complexity of our method.

\begin{proof}
Recall the constant $Q$ from Theorem \ref{thm:pca_generalization_bound}. In
what follows we take $Q$ to be the maximum of the two such constants
when arising from application of the theorem on the two different probability
spaces $(\X,\mu)$ and $(\Y, \Psi_\sharp \mu).$ Through the proof we use $\E$
to denote $\E_{\{x_j \} \sim \mu}$ the expectation with respect to the dataset $\{ x_j\}_{j=1}^N$.

We begin by approximating the error incurred by using \(\Psip\) given by
\eqref{eq:apsipca}:
\begin{align}
\label{eq:truephiapprox}
\begin{split}
  \E  & \E_{x \sim \mu} \|\Psip(x) - \Psi(x)\|^2_\Y \\
  &= \E \E_{x \sim \mu} \|(\GY \circ \FY \circ \Psi \circ \GX \circ \FX)(x) - \Psi(x)\|^2_\Y \\
&= \E \E_{x \sim \mu} \left\|\Pi_{V_{\dY,N}^\Y} \Psi (\Pi_{V_{\dX,N}^\X} x) - \Psi(x) \right\|^2_\Y \\
&\leq 2 \E \E_{x \sim \mu} \left\|\Pi_{V_{\dY,N}^\Y} \Psi(\Pi_{V_{\dX,N}^\X} x) - \Pi_{V_{\dY,N}^\Y} \Psi(x) \right\|^2_\Y \\ 
&\:\:\:\:+ 2 \E \E_{x \sim \mu}\left \|\Pi_{V_{\dY,N}^\Y} \Psi(x) - \Psi(x) \right\|^2_\Y \\
& \leq 2 L^2 \E \E_{x \sim \mu} \left\|\Pi_{V_{\dX,N}^\X} x - x \right\|^2_\X
+ 2 \E \E_{y \sim \Psi_\sharp \mu} \left\|\Pi_{V_{\dY,N}^\Y} y - y \right\|^2_\Y \\
&= 2 L^2 \E [R^{\mu}(V_{\dX,N}^\X)] + 2 \E [R^{\Psi_\sharp \mu}(V_{\dY,N}^\Y)]
\end{split}
\end{align}
noting that the operator norm of an orthogonal projection is \(1\). 
Theorem \ref{thm:pca_generalization_bound}
allows us to control this error, and leads to 
\begin{align}
\label{eq:truephiapprox2}
\begin{split}
\E \E_{x \sim \mu} \bigl(\en(x)^2\bigr) &=\E \E_{x \sim \mu} \|\Psin(x) - \Psi(x)\|^2_\Y\\
& \le 2\E \E_{x \sim \mu}\|\Psin(x) - \Psip(x)\|^2_\Y+2\E \E_{x \sim \mu} \|\Psip(x) - \Psi(x)\|^2_\Y\\
& \le 2 \E_{x \sim \mu}\|\Psin(x) - \Psip(x)\|^2_\Y\\
&\quad\quad+4L^2\left( \sqrt{\frac{Q \dX}{N}} + R^{\mu}(V_{\dX}^\X) \right)+
4\left(\sqrt{\frac{Q \dY}{N}} + R^{\Psi_\sharp \mu}(V_{\dY}^\Y)\right).
\end{split}
\end{align}

We now approximate \(\varphi\) by a neural network \(\chi\) as a step
towards estimating $\|\Psin(x) - \Psip(x)\|_\Y.$ 
To that end we first note from Lemma \ref{l:add} that 
\(\varphi\) is Lipschitz, and hence continuous, as a mapping from $\R^{\dX}$
into $\R^{\dY}.$ 
Identify the components \(\varphi(s) = (\varphi^{(1)}(s), \dots, \varphi^{(\dY)}(s))\)
where each function \(\varphi^{(j)} \in C(\R^{\dX};\R)\).
We consider the restriction of each component function to the set  \([-M,M]^{\dX}\).
Let us now change variables by defining \(\tilde{\varphi}^{(j)} : [0,1]^{\dX} \to \R\) by
\(\tilde{\varphi}^{(j)}(s) = (1/2M) \varphi^{(j)}(2Ms - M)\) for any \(s \in [0,1]^{\dX}\).
Note that   equivalently  we have \(\varphi^{(j)}(s) = 2M \tilde{\varphi}^{(j)}((s+M)/2M)\) for any \(s \in [-M,M]^{\dX}\) 
and further  \(\varphi^{(j)}\) and \(\tilde{\varphi}^{(j)}\) have the same
Lipschitz constants on their respective domains.
Applying \cite[Thm.~1]{yarotsky}  to the $\tilde{\varphi}^{(j)}$(s)  then yields  existence of neural networks
  \(\tilde{\chi}^{(1)}, \dots, \tilde{\chi}^{(\dY)} : [0, 1]^{\dX} \to \R\)
such that
\[|\tilde{\chi}^{(j)}(s) - \tilde{\varphi}^{(j)}(s)| < \frac{\tau}{2 M \sqrt{\dY}}
\quad \forall \s \in [0,1]^{\dX},\]
for any \(j \in \{1,\dots, \dY\}\). 
In fact, each neural network
$\tilde{\chi}^{(j)} \in \mathcal{M}(\dX; t^{(j)}, r^{(j)})$ with parameters $t^{(j)}$ and $r^{(j)}$ satisfying
\begin{equation*}
  t^{(j)} \le c^{(j)} \left[ \log( M \sqrt{\dY} / \tau) + 1 \right], \qquad
  r^{(j)} \le c^{(j)} \left(\frac{\tau}{2M}\right)^{- \dX} \left[ \log( M \sqrt{\dY} /\tau ) +1  \right], 
\end{equation*}
with constants $c^{(j)}(\dX) >0$.
Hence defining $\chi^{(j)}: \R^{\dX} \to \R$ by \(\chi^{(j)}(s) := 2M \tilde{\chi}^{(j)}((s+M)/2M)$
for any \(s \in [-M,M]^{\dX}\), we have that 
\begin{equation*}
  \big| \big( \chi^{(1)}(s), \dots, \chi^{(\dY)}(s) \big)  - \varphi(s) \big|_2 < \tau
  \quad \forall s \in [-M,M]^{\dX}.
\end{equation*}

We can now simply 
define \(\chi: \R^{\dX} \to \R^{\dY}\) as the  stacked network
\((\chi^{(1)},\dots,\chi^{\dY})\) extended by zero outside of $[-M, M]^{\dX}$
to  immediately obtain
\begin{equation}
\label{eq:nnepsilonclose}
\sup_{ s \in [-M,M]^{\dX}}
\big| \chi(s)  - \varphi(s) \big|_2 < \tau. 
\end{equation}
Thus, by construction 
$\chi \in \mathcal{M}(\dX, \dY, t, r, M)$ with at most
$t \le \max_j t^{(j)}$ many layers and
$r \le r^{(j)}$ 
many active weights and biases in each of its components.

Let us now define the set 
$A = \{x \in \X : \FX(x) \in [-M,M]^{\dX}\}.$
By Lemma \ref{lemma:bounded_projection}, \(\mu(A) \geq 1 - \delta\) and
\(\mu(A^c) \leq \delta\).  Define the approximation error 
\[\ep(x) = \|\Psin(x) - \Psip(x)\|_\Y\]
and decompose its expectation as 
\[\E_{x \sim \mu}\bigl(\ep(x)^2\bigr) = \underbrace{\int_A \ep(x)^2 d\mu(x)}_{\coloneqq I_A} + \underbrace{\int_{A^c} \ep(x)^2 d\mu(x)}_{\coloneqq I_{A^c}}.\]
For the first term,
\begin{align}
\label{eq:erroronA}
\begin{split}
  I_A &\leq  \int_A  \|(\GY \circ \chi \circ \FX)(x)
  - (\GY \circ \varphi \circ \FX)(x)\|^2_\Y d\mu(x)\leq \tau^2, 
\end{split}
\end{align}
by using the fact, established in Lemma \ref{l:add}, that
\(\GY\) is Lipschitz with Lipschitz constant $1$, the
\(\tau\)-closeness of \(\chi\) 
to \(\varphi\) from \eqref{eq:nnepsilonclose}, and \(\mu(A) \leq 1\). 
For the second term we have, using that
$\GY$ has Lipschitz constant $1$ and that $\chi$ vanishes on $A^c$, 
\begin{align}
\label{eq:erroroutsideA}
\begin{split}
I_{A^c} &\leq  \int_{A^c}  \|(\GY \circ \chi \circ \FX)(x) - (\GY \circ \varphi \circ \FX)(x)\|^2_\Y d\mu(x) \\ 
&\leq  \int_{A^c} | \chi(\FX(x)) - \varphi(\FX(x))|^2_2 d\mu(x)
=\int_{A^c} | \varphi(\FX(x))|^2_2 d\mu(x).
\end{split}
\end{align}
Once more from Lemma \ref{l:add}, we have that

\[|\FX(x)|_2 \leq \|x\|_{\X}; \quad |\varphi(x)|_2 \leq |\varphi(0)|_2 + L|x|_2,\]
so that 
\begin{align}
\label{eq:erroroutsideA2}
\begin{split}
I_{A^c} & \leq 2\bigl(\mu(A^c)|\varphi(0)|_2^2+\mu(A_c)^{\frac12}L^2  (\E_{x \sim \mu } \|x\|_\X^4)^{\frac12}\bigr),\\
& \leq 2 \bigl(\delta |\varphi(0)|_2^2+\delta^{\frac12}L^2  (\E_{x \sim \mu } \|x\|_\X^4)^{\frac12}\bigr). 
\end{split}
\end{align}
Combining \eqref{eq:truephiapprox2}, \eqref{eq:erroronA} 
and \eqref{eq:erroroutsideA2}, we obtain the desired result.
\end{proof}

\section{Numerical Results}
\label{sec:numerics}

\begin{figure}[t]
    \centering
    \begin{subfigure}[b]{0.24\textwidth}
        \includegraphics[width=\textwidth]{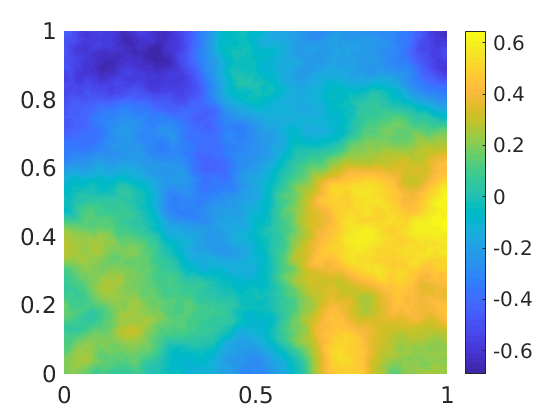}
        \caption{\(\mug\)}
    \end{subfigure}
    \begin{subfigure}[b]{0.24\textwidth}
        \includegraphics[width=\textwidth]{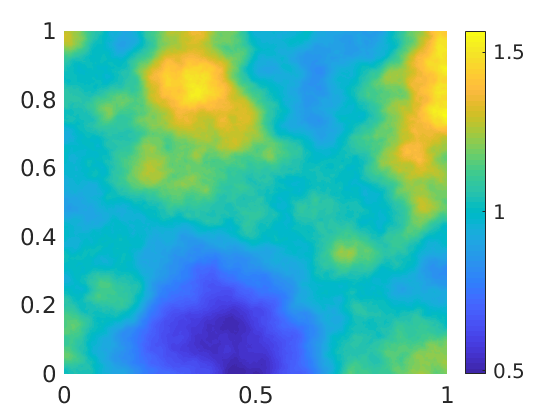}
        \caption{\(\mul\)}
    \end{subfigure}
    \begin{subfigure}[b]{0.24\textwidth}
        \includegraphics[width=\textwidth]{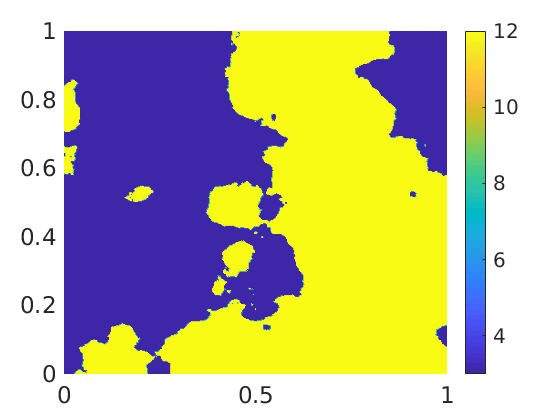}
        \caption{\(\mup\)}
    \end{subfigure}
    \begin{subfigure}[b]{0.24\textwidth}
        \includegraphics[width=\textwidth]{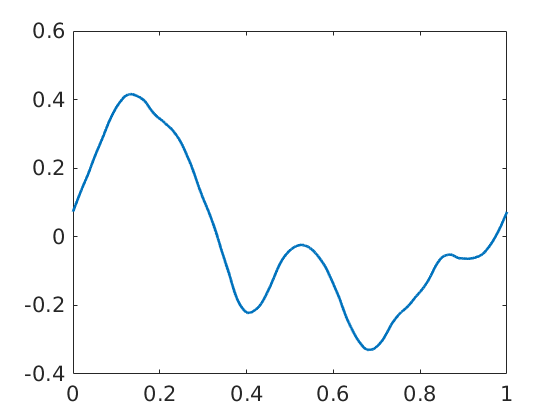}
        \caption{\(\mu_{\text{B}}\)}
    \end{subfigure}

    \caption{Representative samples for each of the probability measures $\mug, \mul, \mup, \mu_{\text{B}}$
      defined in Subsection \ref{ssec:PDES}. \(\mug\) and \(\mup\) are used in Subsection~\ref{sec:numlip}
      to model the inputs, \(\mul\) and \(\mup\) are used in Subsection~\ref{sec:numdarcy}, and $\mu_{\text{B}}$
      is used in Subsection~\ref{sec:burgers}.}
    \label{fig:samples} 
\end{figure}

\begin{figure}[h!]
    \centering
    \begin{subfigure}[b]{0.24\textwidth}
        \includegraphics[width=\textwidth]{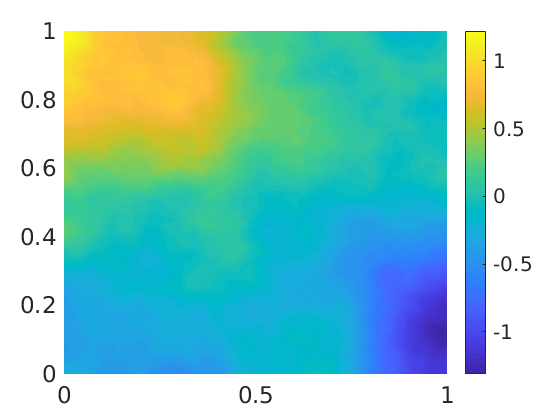}
        \caption{Input}
    \end{subfigure}
    \begin{subfigure}[b]{0.24\textwidth}
        \includegraphics[width=\textwidth]{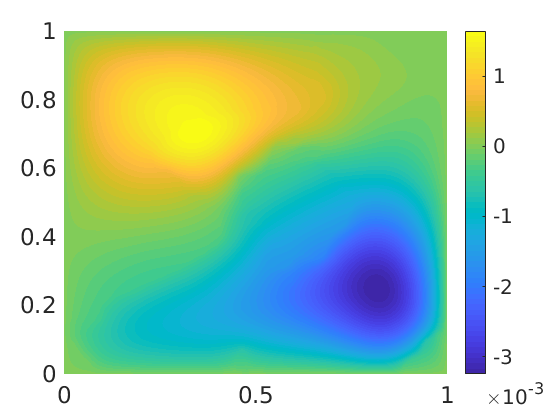}
        \caption{Ground Truth}
    \end{subfigure}
    \begin{subfigure}[b]{0.24\textwidth}
        \includegraphics[width=\textwidth]{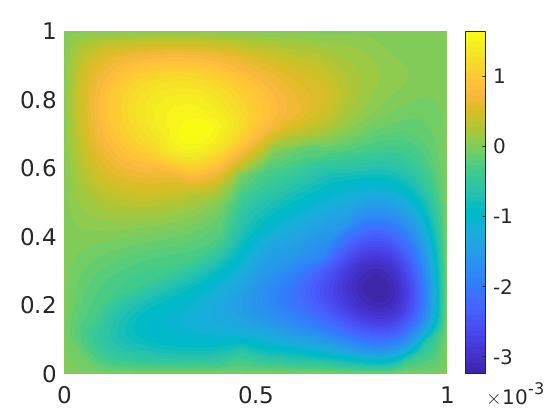}
        \caption{Approximation}
    \end{subfigure}
    \begin{subfigure}[b]{0.24\textwidth}
        \includegraphics[width=\textwidth]{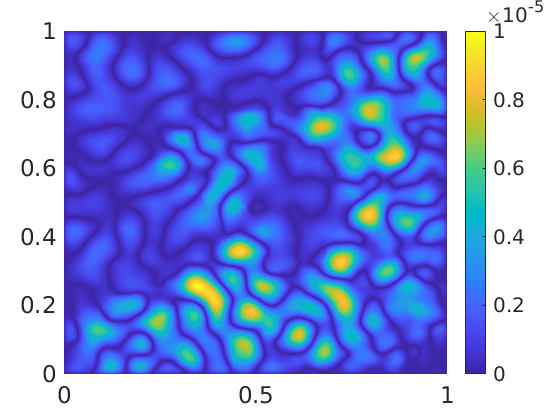}
        \caption{Error}
    \end{subfigure}

    \begin{subfigure}[b]{0.24\textwidth}
    \includegraphics[width=\textwidth]{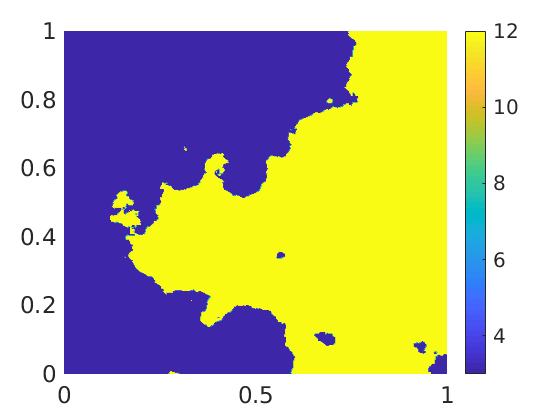}
    \caption{Input}
    \end{subfigure}
    \begin{subfigure}[b]{0.24\textwidth}
        \includegraphics[width=\textwidth]{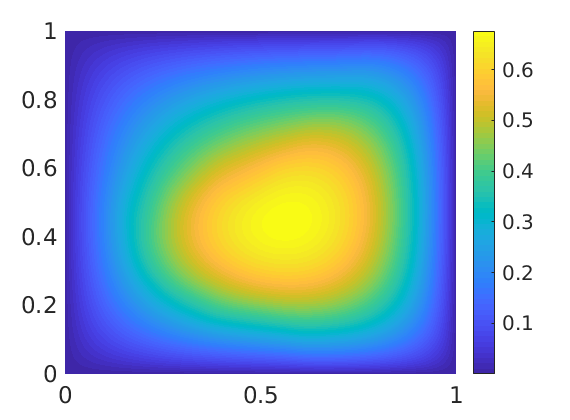}
        \caption{Ground Truth}
    \end{subfigure}
    \begin{subfigure}[b]{0.24\textwidth}
        \includegraphics[width=\textwidth]{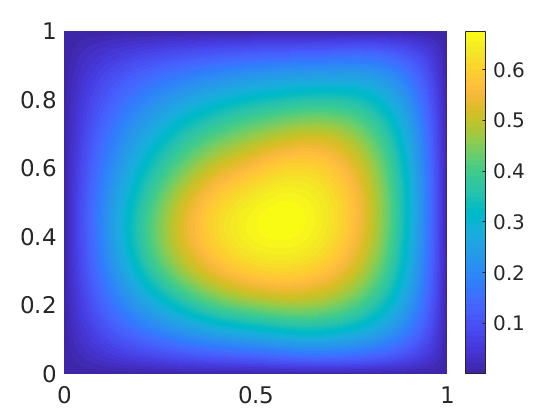}
        \caption{Approximation}
    \end{subfigure}
    \begin{subfigure}[b]{0.24\textwidth}
        \includegraphics[width=\textwidth]{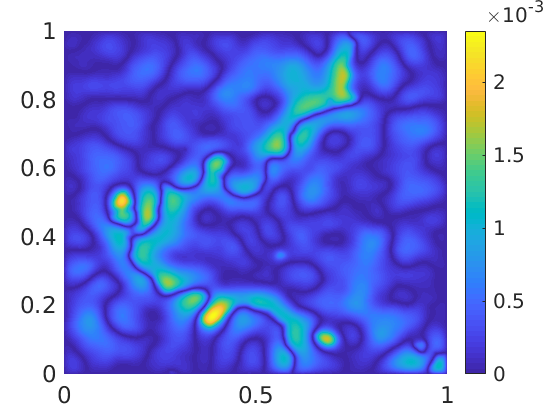}
        \caption{Error}
    \end{subfigure}

    \begin{subfigure}[b]{0.24\textwidth}
    \includegraphics[width=\textwidth]{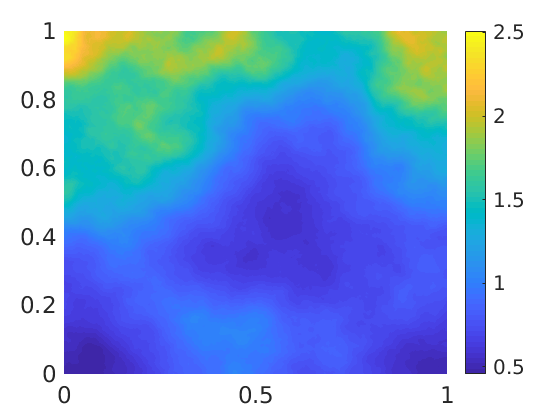}
    \caption{Input}
    \end{subfigure}
    \begin{subfigure}[b]{0.24\textwidth}
        \includegraphics[width=\textwidth]{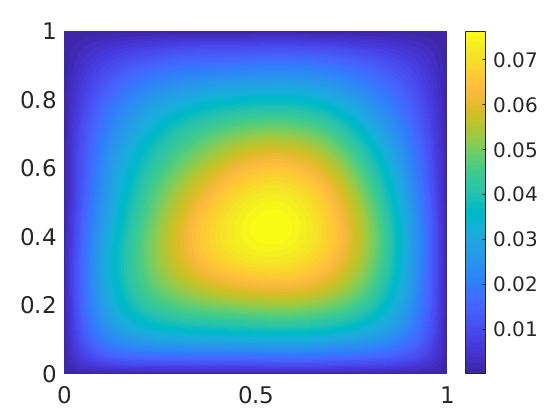}
        \caption{Ground Truth}
    \end{subfigure}
    \begin{subfigure}[b]{0.24\textwidth}
        \includegraphics[width=\textwidth]{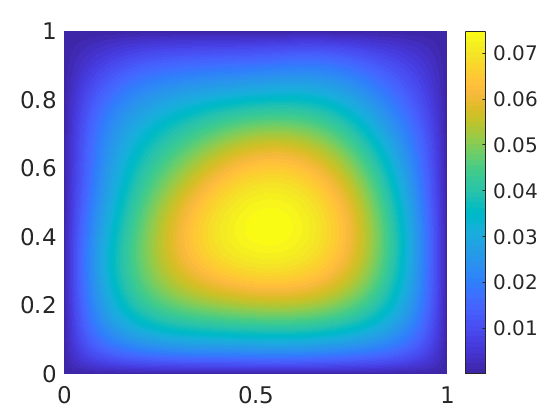}
        \caption{Approximation}
    \end{subfigure}
    \begin{subfigure}[b]{0.24\textwidth}
        \includegraphics[width=\textwidth]{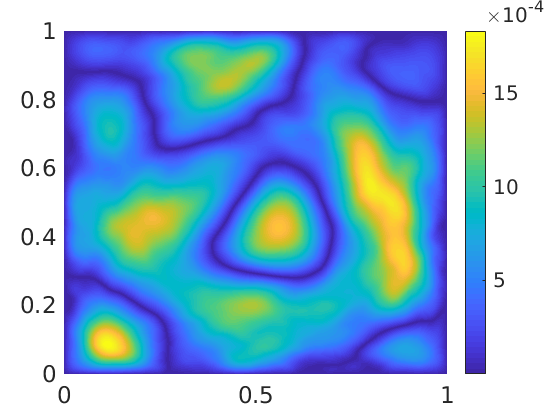}
        \caption{Error}
    \end{subfigure}

    \begin{subfigure}[b]{0.24\textwidth}
    \includegraphics[width=\textwidth]{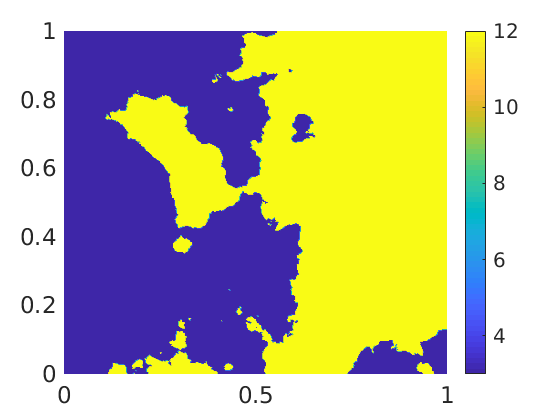}
    \caption{Input}
    \end{subfigure}
    \begin{subfigure}[b]{0.24\textwidth}
        \includegraphics[width=\textwidth]{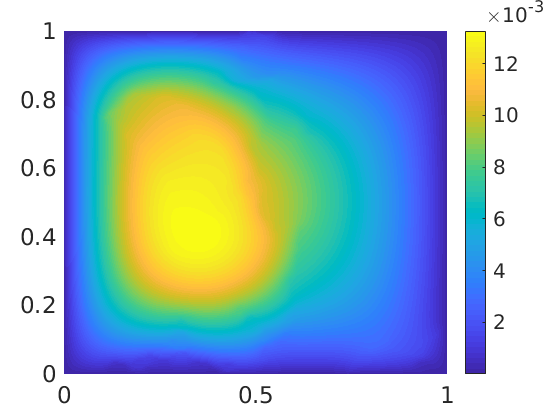}
        \caption{Ground Truth}
    \end{subfigure}
    \begin{subfigure}[b]{0.24\textwidth}
        \includegraphics[width=\textwidth]{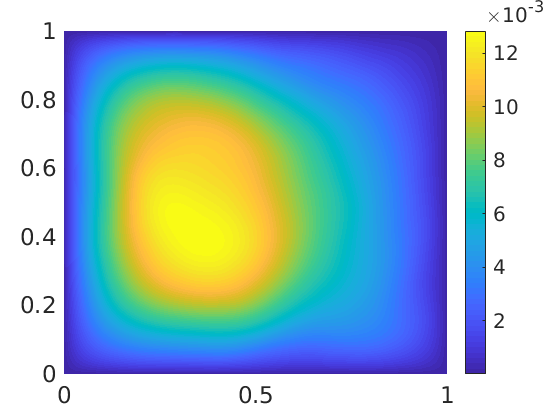}
        \caption{Approximation}
    \end{subfigure}
    \begin{subfigure}[b]{0.24\textwidth}
        \includegraphics[width=\textwidth]{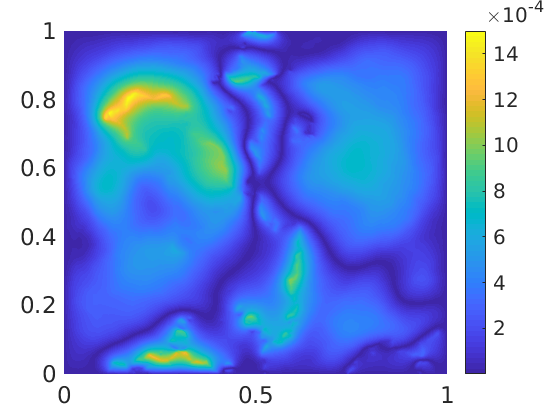}
        \caption{Error}
    \end{subfigure}

    \begin{subfigure}[b]{0.24\textwidth}
    \includegraphics[width=\textwidth]{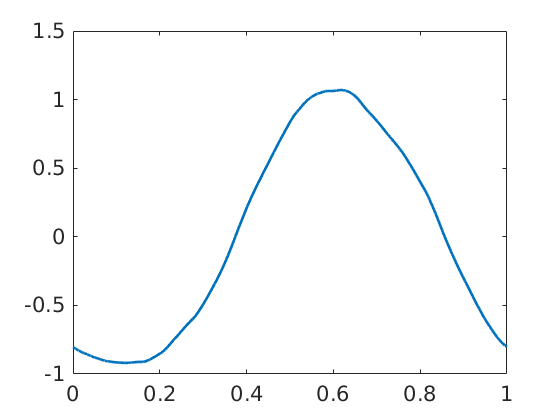}
    \caption{Input}
    \end{subfigure}
    \begin{subfigure}[b]{0.24\textwidth}
        \includegraphics[width=\textwidth]{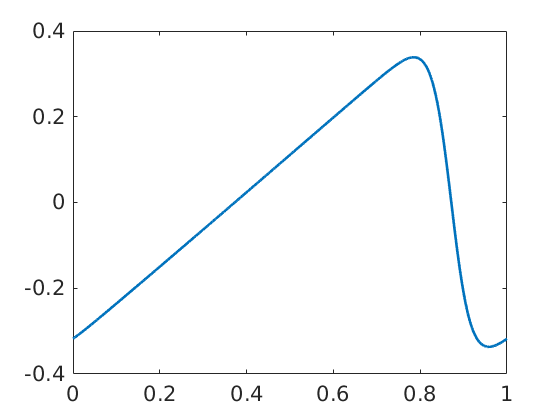}
        \caption{Ground Truth}
    \end{subfigure}
    \begin{subfigure}[b]{0.24\textwidth}
        \includegraphics[width=\textwidth]{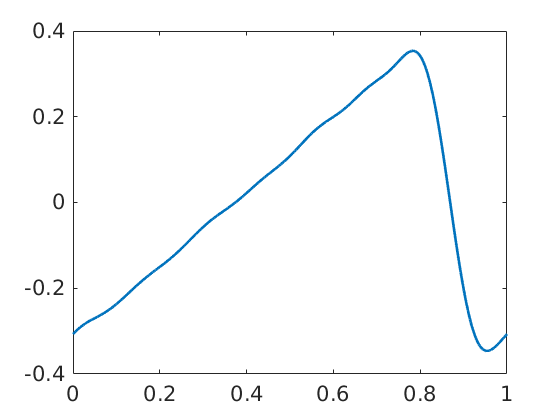}
        \caption{Approximation}
    \end{subfigure}
    \begin{subfigure}[b]{0.24\textwidth}
        \includegraphics[width=\textwidth]{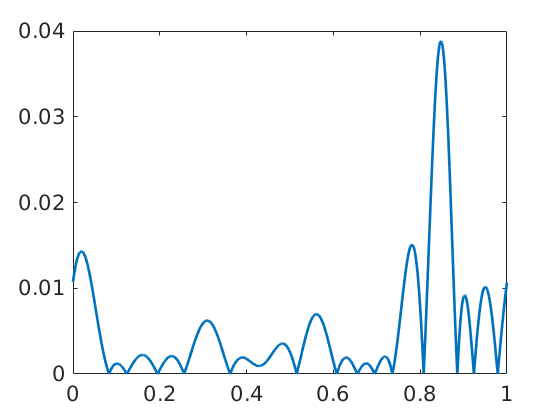}
        \caption{Error}
    \end{subfigure}

    \caption{Randomly chosen examples from the test set for each of the five considered problems.
    Each row is a different problem: linear elliptic, Poisson, Darcy flow with log-normal coefficients, 
    Darcy flow with piecewise constant coefficients, and Burgers' equation respectively from top to bottom. The approximations 
    are constructed with our best performing method (for \(N=1024\)): Linear \(d=150\), Linear \(d=150\), NN \(d=70\),
    NN \(d=70\), NN \(d=15\) respectively from top to bottom.}
    \label{fig:examples}
\end{figure}

We now present a series of numerical experiments that demonstrate
the effectiveness of our proposed methodology in the context
of the approximation of parametric PDEs. We work in settings
which both verify our theoretical results and show that the ideas
work outside the confines of the theory. 
The key idea underlying our work is to construct the neural network
architecture so that it is defined \emph{as a map between Hilbert spaces}
and only then to discretize and obtain a method that is implementable
in practice; prevailing methodologies first discretize and then
apply a standard neural network.
Our approach leads, when discretized, to methods that have properties which
are uniform with respect to the mesh size used.  We demonstrate this through
our numerical experiments.
In practice, we obtain an approximation $\Psinum$ to
$\Psin$, reflecting the numerical discretization used, and the fact that $\mu$
and its pushforward under $\Psi$ are only known to us through samples
and, in particular, samples of the pushforward of $\mu$ under the numerical 
approximation of the input-output map. However since, as we will show,
our method is robust to the discretization used, we will not explicitly
reflect the dependence of the numerical method in the notation that 
appears in the remainder of this section.

In Subsection \ref{ssec:PDES} we introduce a class of parametric 
elliptic PDEs arising from the Darcy model of flow in porous media, as well as the time-dependent, parabolic, Burgers' equation,
that define a variety of input-output maps for
our numerical experiments; we also introduce the probability measures
that we use on the input spaces.
Subsection \ref{sec:numlip} presents numerical results for a Lipschitz map. Subsections \ref{sec:numdarcy}, \ref{sec:burgers}
present numerical results for the Darcy flow problem and the flow map for the Burgers'
equation; this leads to
non-Lipschitz input-output maps, beyond our theoretical developments.
We emphasize that while our method is designed for approximating nonlinear 
operators \(\Psi\), we include some numerical examples where \(\Psi\) is linear. 
Doing so is helpful for confirming some of our theory and comparing against other 
methods in the literature. Note that
when \(\Psi\) is linear, each piece in the approximate decomposition \eqref{eq:apsipca} 
is also linear, in particular, \(\varphi\) is linear. Therefore it is sufficient to parameterize 
\(\varphi\) as a linear map (matrix of unknown coefficients) instead of a neural network. 
We include such experiments in Section \ref{sec:numlip} revealing that,
while a neural network approximating 
\(\varphi\) arbitrarily well exists, the optimization methods used for training the neural network
fail to find it. It may therefore be beneficial to directly build into the parametrization 
known properties of \(\varphi\), such as linearity, when they are known.
We emphasize that, for general nonlinear maps, linear methods
significantly underperform in comparison with our neural network
approximation and we will demonstrate this for the Darcy flow problem,
and for Burgers' equation.

We use standard implementations of PCA, with dimensions specified
for each computational example below.  All
computational examples use an identical neural network
architecture: a $5$-layer dense network with layer widths
$500, 1000, 2000, 1000, 500$, ordered from first to last layer, 
and the SELU nonlinearity \cite{selu}. 
We note that Theorem \ref{thm:approximation} 
requires greater depth for greater
accuracy but that we have found our $5$-layer network to
suffice for all of the examples described here. Thus we have not 
attempted to optimize the architecture of the neural network.
We use stochastic gradient descent with Nesterov momentum (\(0.99\)) to 
train the network parameters \cite{deeplearningbook}, 
each time picking the largest learning rate
that does not lead to blow-up in the error. While the network 
must be re-trained for each new choice of reduced dimensions \(d_\mathcal{X}, d_\mathcal{Y}\),
initializing the the hidden layers with a pre-trained network can 
help speed up convergence.

\subsection{PDE Setting}
\label{ssec:PDES}

We will consider a variety of solution maps defined
by second order elliptic PDEs of the form \eqref{eq:darcy}.
which are prototypical of many scientific applications. 
We take \(\OOmega = (0,1)^2\) to be the unit box, 
\(a \in L^\infty(\OOmega;\R_+), f \in L^2(\OOmega;\R)\), 
and let \(u \in H^1_0(\OOmega;\R)\) be the unique weak solution of \eqref{eq:darcy}. 
Note that, since $D$ is bounded, \(L^\infty(\OOmega;\R_+)\) is continuously embedded within the Hilbert space \(L^2(\OOmega;\R_+).\)
We will consider two variations of the input-output map generated
by the solution operator for \eqref{eq:darcy}; in one, it is Lipschitz and lends
itself to  the theory of Subsection \ref{sec:approxanalysis}  and, in the other,
it is not Lipschitz. We obtain numerical results which demonstrate our
theory as well as demonstrating
the effectiveness of our proposed methodology in the non-Lipschitz
setting.

Furthermore, we consider the one-dimensional viscous Burgers' equation 
on the torus given as
\begin{align}
\begin{split}
  \label{eq:burgers}
  \frac{\partial}{\partial t} u(\s,t) + \frac{1}{2}\frac{\partial}{\partial s} (u(\s,t))^2 &= \beta \frac{\partial^2}{\partial s^2} u(\s,t), \qquad (\s,t) \in \mathbb{T}^1 \times (0,\infty)   \\
  u(\s,0) &= u_0(\s), \qquad \qquad \:\:\:\: \s \in \mathbb{T}^1 
\end{split}
\end{align}
where \(\beta > 0\) is the viscosity coefficient and \(\mathbb{T}^1\) is the one dimensional unit torus obtained by equipping
the interval \([0,1]\) with periodic boundary conditions. We take \(u_0 \in L^2(\mathbb{T}^1;\R)\) and have that, for any \(t>0\), 
\(u(\cdot,t) \in H^r(\mathbb{T}^1;\R)\) for any \(r > 0\) is the unique weak 
solution to \eqref{eq:burgers} \cite{temam2012infinite}. In Subsection
\ref{sec:burgers}, we consider the input-output map generated by the flow map of \eqref{eq:burgers} evaluated at a fixed time
which is a locally Lipschitz operator.

We make use of four probability measures which we now describe.
The first, which will serve as a base measure in two dimensions, is the Gaussian 
\(\mug = \mathcal{N}(0, (-\Delta + 9I)^{-2})\)
with a zero Neumann boundary condition on the operator \(\Delta\).
Then we define \(\mul\) to be the log-normal measure defined as the push-forward 
of \(\mug\) under the exponential map i.e. $\mul = \exp_\sharp \mug$.
Furthermore, we define \(\mup = T_\sharp \mug \) to be the 
push-forward of \(\mug\) under the piecewise constant map
\begin{equation*}
  T(s) = \left\{
    \begin{aligned}
      &12 && s \ge 0,\\
      & 3 && s < 0.
    \end{aligned}
    \right.
  \end{equation*}
Lastly, we consider the Gaussian \(\mu_{\text{B}} = \mathcal{N}(0, 7^4(-\frac{d^2}{ds^2} + 7^2I)^{-2.5})\) defined on \(\mathbb{T}^1\). Figure \ref{fig:samples} shows an example draw from each of the above  measures.
We will use as $\mu$ one of these four measures in each experiment we conduct.
Such probability measures are commonly used in the stochastic modeling of 
physical phenomenon \cite{Lord}. For example, \(\mup\) may be thought of as modeling
the permeability of a porous medium containing two different constituent
parts \cite{darcyref}. Note that it is to be expected that a good
choice of architecture will depend on the probability measure used to
generate the inputs. Indeed good choices of the reduced dimensions 
$\dX$ and $\dY$ are determined
by the input measure and its pushforward under $\Psi$, respectively.

For each subsequently described problem we use, unless stated otherwise, \(N=1024\) training examples from $\mu$
and its pushforward under $\Psi$, from which
we construct $\Psin$, and then 
\(5000\) unseen testing examples from $\mu$ 
in order to obtain a Monte Carlo estimate of the relative test error:
\[\E_{x \sim \mu} \frac{\|(G_2 \circ \chi \circ F_1)(x) - \Psi(x)\|_\Y}{\|\Psi(x)\|_\Y}.\]
For problems arising from \eqref{eq:darcy}, all data is collected on a uniform \(421 \times 421\) mesh and the PDE is solved with a 
second order finite-difference scheme. For problems arising from \eqref{eq:burgers}, all data is collected on a uniform \(4096\) point mesh
and the PDE is solved using a pseudo-spectral method.  
Data for all other mesh sizes is sub-sampled from the original. 
We refer to the size of the discretization in one direction e.g. 421, as the \textit{resolution}.
We fix \(d_{\mathcal{X}} = d_{\mathcal{Y}}\) (the dimensions after PCA in the input and
output spaces) and refer to this as \textit{the reduced dimension}.
We experiment with using a linear map as well as a dense neural network 
for approximating \(\varphi\); in all figures we distinguish between these
by referring to Linear or NN approximations respectively. When parameterizing 
with a neural network, we use the aforementioned stochastic gradient based method for training,
while, when parameterizing with a linear map, we simply solve the linear least squares problem 
by the standard normal equations.

We also compare all of our results to the work of \cite{surrogatemodeling}
which utilizes a $19$-layer fully-connected convolutional neural network, 
referencing this approach as Zhu within the text. 
This is done to show that the image-to-image regression approach that many such works utilize 
yields approximations that are not consistent in the continuum, 
and hence across different discretizations; in contrast, our methodology
is designed as a mapping between Hilbert spaces and as a consequence
is robust across different discretizations.
For some problems in Subsection \ref{sec:numlip}, we compare to the method 
developed in \cite{cohenalgo}, which we refer to as Chkifa.
For the problems in Subsection \ref{sec:numdarcy},
we also compare to the reduced basis method 
\cite{DeVoreReducedBasis,quarteroni2015reduced} when instantiated with PCA.
We note that both Chkifa and the reduced basis method are intrusive, 
i.e., they need knowledge of the governing PDE. Furthermore the method of Chkifa needs
full knowledge of the generating process of the inputs. 
We re-emphasize that our proposed method is fully data-driven.

\subsection{Globally Lipschitz Solution Map}
\label{sec:numlip}

\begin{figure}[t]
    \centering
    \begin{subfigure}[b]{0.32\textwidth}
        \includegraphics[width=\textwidth]{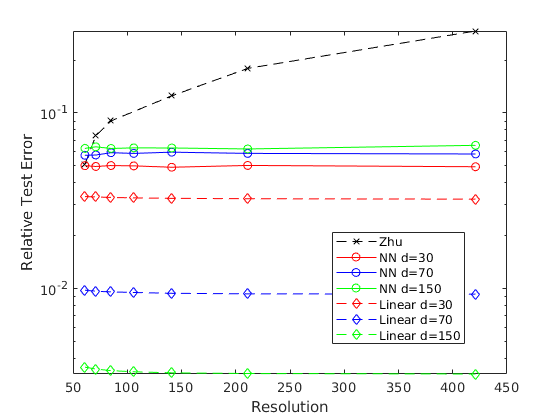}
        \caption{}
    \end{subfigure}
    \begin{subfigure}[b]{0.32\textwidth}
        \includegraphics[width=\textwidth]{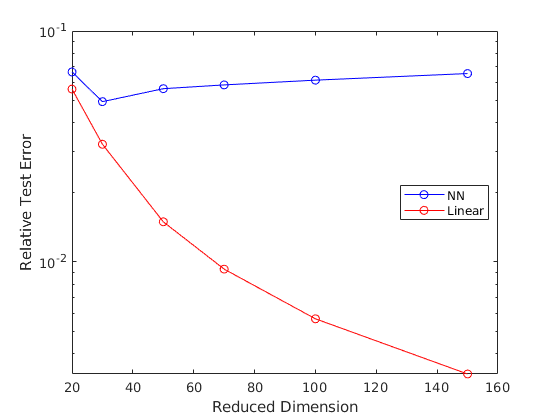}
        \caption{}
    \end{subfigure}
    \begin{subfigure}[b]{0.32\textwidth}
        \includegraphics[width=\textwidth]{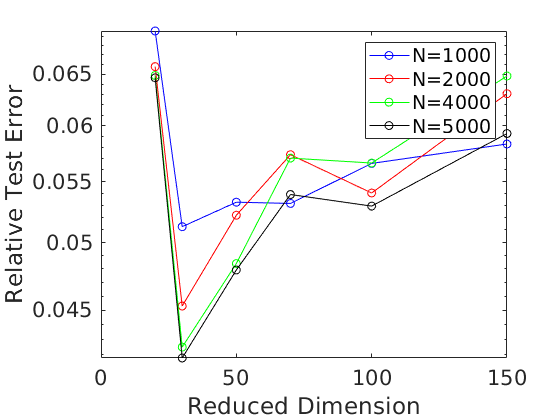}
        \caption{}
    \end{subfigure}

    \caption{Relative test errors on the linear elliptic problem. Using \(N=1024\) training examples, panel (a)
    shows the errors as a function of the resolution while panel (b) fixes a \(421 \times 421\) mesh and shows
    the error as a function of the reduced dimension. Panel (c) only shows results for our method using a neural network, 
    fixing a \(421 \times 421\) mesh and showing the error as a function of the reduced dimension for 
    different amounts of training data. }
    \label{fig:ellipticproblem}
\end{figure}

\begin{figure}[t]
    \centering
    \begin{subfigure}[b]{0.32\textwidth}
        \includegraphics[width=\textwidth]{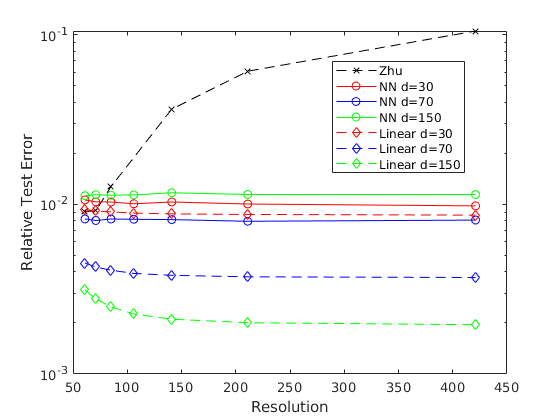}
        \caption{}
    \end{subfigure}
    \begin{subfigure}[b]{0.32\textwidth}
        \includegraphics[width=\textwidth]{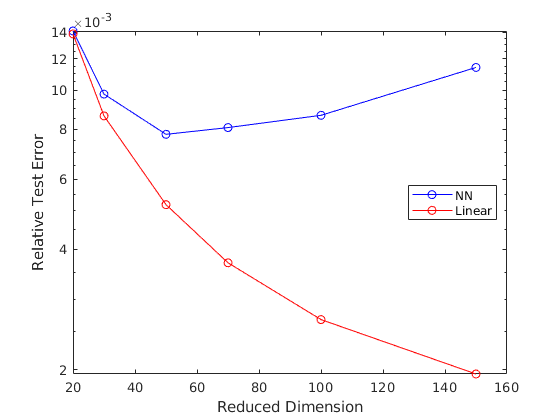}
        \caption{}
    \end{subfigure}
    \begin{subfigure}[b]{0.32\textwidth}
        \includegraphics[width=\textwidth]{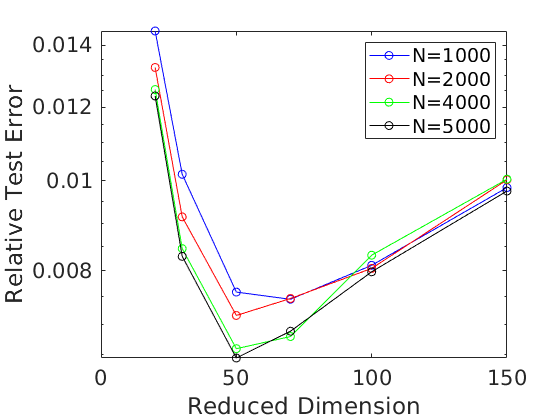}
        \caption{}
    \end{subfigure}

    \caption{Relative test errors on the Poisson problem. Using \(N=1024\) training examples, panel (a)
    shows the errors as a function of the resolution while panel (b) fixes a \(421 \times 421\) mesh and shows
    the error as a function of the reduced dimension. Panel (c) only shows results for our method using a neural network, 
    fixing a \(421 \times 421\) mesh and showing the error as a function of the reduced dimension for 
    different amounts of training data.}
    \label{fig:poissonproblem}
\end{figure}

\begin{figure}[t]
    \centering
    \begin{subfigure}[b]{0.32\textwidth}
        \includegraphics[width=\textwidth]{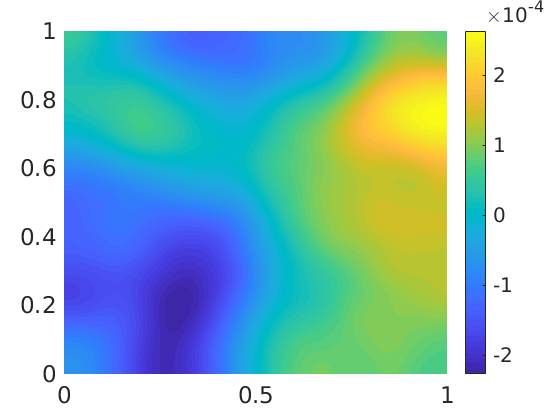}
        \caption{}
    \end{subfigure}
    \begin{subfigure}[b]{0.32\textwidth}
        \includegraphics[width=\textwidth]{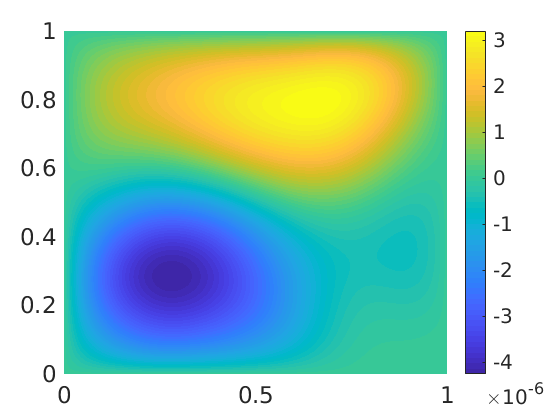}
        \caption{}
    \end{subfigure}
    \begin{subfigure}[b]{0.32\textwidth}
        \includegraphics[width=\textwidth]{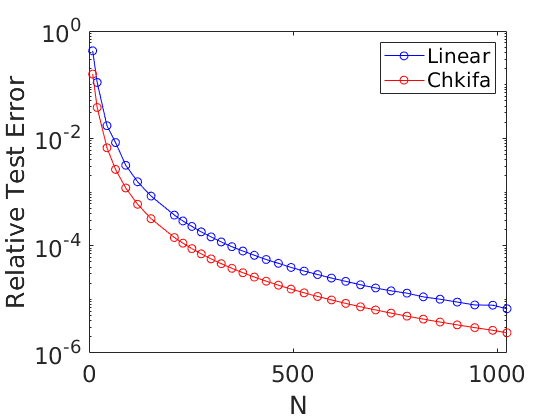}
        \caption{}
    \end{subfigure}

    \caption{Panel (a) shows a sample drawn from the model \eqref{eq:cohenmodel} while panel (b) shows the solution of the Poisson 
    equation with the sample from (a) as the r.h.s. Panel (c) shows the relative test error as a function of the amount of PDE solves/ training data 
    for the method of Chkifa and our method respectively. We use the reduced dimension \(d=N\).} 
    \label{fig:cohen}
\end{figure}

We consider the input-output map 
\(\Psi: L^2(\OOmega;\R) \to H^1_0(\OOmega;\R)\)
mapping \(f \mapsto u\) in \eqref{eq:darcy}
with the coefficient \(a\) fixed. Since \eqref{eq:darcy} is a linear PDE, \(\Psi\) is linear and therefore 
Lipschitz. We study two instantiations of this problem. In the first, we draw a single \(a \sim \mup\)
and fix it. We then solve \eqref{eq:darcy} with data \(f \sim \mug\).
We refer to this as 
the \textit{linear elliptic} problem. See row 1 of Figure \ref{fig:examples} 
for an example. In the second, we set \(a(w) = 1\) \(\forall w \in \OOmega\), in which case 
\eqref{eq:darcy} becomes the Poisson equation which we solve with data \(f \sim \mu = \mug\). We refer 
to this as the \textit{Poisson} problem. See row 2 of Figure \ref{fig:examples} for an example. 

Figure \ref{fig:ellipticproblem} (a) shows the relative test errors
as a function of the resolution on the linear elliptic problem, while Figure \ref{fig:poissonproblem} (a) shows them on the Poisson problem. The primary
observation to make about panel (a) in these two figures is that it shows that
the error in our proposed method does not change as the resolution changes. In contrast, it
also shows that the image-to-image regression approach of Zhu \cite{surrogatemodeling}, whilst accurate at low mesh resolution, fails to be invariant 
to the size of the discretization and errors increase in an uncontrolled fashion
as greater resolution is used. The fact that our dimension reduction approach 
achieves constant error as we refine the mesh, reflects its design as a 
method on Hilbert space which
may be approximated consistently on different meshes.
Since the operator \(\Psi\) here is linear, the true 
map of interest \(\varphi\) given by \eqref{eq:approxphi}
is linear since \(F_{\mathcal{Y}}\) and \(G_{\mathcal{X}}\) are, by the definition of PCA, linear.
It is therefore
unsurprising that the linear approximation consistently outperforms 
the neural network, a fact also demonstrated in panel (a) of the two 
figures.
While it is theoretically possible to find a neural network 
that can, at least, match the performance of the linear map,
in practice, the non-convexity of the associated optimization problem can cause non-optimal 
behavior. Panels (b) of Figures \ref{fig:ellipticproblem} and \ref{fig:poissonproblem} show the 
relative error as a function of the reduced dimension for a fixed mesh size. We see that while 
the linear maps consistently improve with the reduced dimension, the neural networks struggle 
as the complexity of the optimization problem is increased. This problem can usually be alleviated 
with the addition of more data as shown in panels (c), but there are still no guarantees that the optimal neural network is found. 
Since we use a highly-nonlinear 5-layer network to represent the linear \(\varphi\), this issue is exacerbated for
this problem and the addition of more data only slightly improves the accuracy as seen in panels (c).
In Appendix~\ref{app:error}, we show the relative test error during the training process and observe that 
some overfitting occurs, indicating that the optimization problem is stuck in a local minima away 
from the optimal linear solution.
This is an issue that is 
inherent to most deep neural network based methods. Our results suggest 
that building in \textit{a priori} information about the solution map, such as linearity, 
can be very beneficial for the approximation scheme as it can help reduce the 
complexity of the optimization.

To compare to the method of Chkifa \cite{cohenalgo}, we will assume the following model for the inputs,
\begin{equation}
\label{eq:cohenmodel}
f = \sum_{j=1}^\infty \xi_j \phi_j\
\end{equation}
where \(\xi_j \sim U(-1,1)\) is an i.i.d. sequence, and \(\phi_j = \sqrt{\lambda_j} \psi_j \) where \(\lambda_j, \psi_j\)
are the eigenvalues and eigenfunctions of the operator  \((-\Delta + 100I)^{-4.1}\) with a zero Neumann boundary.
This construction ensures that there exists \(p \in (0,1)\) such that \((\|\phi_j\|_{L^\infty})_{j \geq 1} \in \ell^p(\mathbb{N};\R)\)
which is required for the theory in \cite{cohenanalytic}.
We assume this model for \(f\), the r.h.s. of the Poisson equation, and consider the solution 
operator \(\Psi : \ell^\infty(\mathbb{N};\R) \to H_0^1(\OOmega; \R)\) mapping \((\xi_j)_{j \geq 1} \mapsto u\).
Figure \ref{fig:cohen} panels (a)-(b) show an example input from \eqref{eq:cohenmodel} and its 
corresponding solution \(u\).
Since this operator is linear, its Taylor series representation simply amounts to
\begin{equation}
\label{eq:taylorsum}
\Psi((\xi_j)_{j \geq 1}) = \sum_{j=1}^\infty \xi_j \eta_j
\end{equation}
where \(\eta_j \in H_0^1(\OOmega;\R)\) satisfy
\[-\Delta \eta_j = \phi_j.\]
This is easily seen by plugging in our model \eqref{eq:cohenmodel}
for \(f\) into the Poisson equation and formally inverting the Laplacian.
We further observe that the \(\ell^1(\mathbb{N};\R)\) summability of the sequence \((\|\eta_j\|_{H_0^1})_{j \geq 1}\) 
(inherited from \((\|\phi_j\|_{L^\infty})_{j \geq 1} \in \ell^p(\mathbb{N};\R)\)) implies that our power series \eqref{eq:taylorsum} is 
summable in \(H_0^1(\OOmega;\R)\). Combining the two observations yields analyticity of \(\Psi\) with the same rates as in \cite{cohenanalytic}
obtained via Stechkin's inequality. For a proof, see Theorem \ref{thm:poissonanalytic}.

We employ the method of Chkifa simply by truncation of \eqref{eq:taylorsum} to \(d\) elements, noting that in this simple linear 
setting there is no longer a need for greedy selection of the index set. We note that this truncation requires \(d\)
PDE solves of the Poisson equation hence we compare to our method when 
using \(N=d\) data points, since this also counts the number
of PDE solves. Since the problem is linear,
we use a linear map to interpolate the PCA latent spaces and furthermore set the reduced dimension of our PCA(s) to \(N\).
Panel (c) of Figure \ref{fig:cohen} shows the results. We see that the method 
of Chkifa outperforms our method for any fixed number of PDE solves, 
although the empirical rate of convergence appears very similar for both 
methods.  Furthermore we highlight that while our
  method appears to have a larger error constant than that of Chkifa, 
it has the advantage that it requires no knowledge of the model 
\ref{eq:cohenmodel} or of the Poisson equation; it is driven entirely 
by the training data.

\subsection{Darcy Flow}
\label{sec:numdarcy}

\begin{figure}[t]
    \centering
    \begin{subfigure}[b]{0.32\textwidth}
        \includegraphics[width=\textwidth]{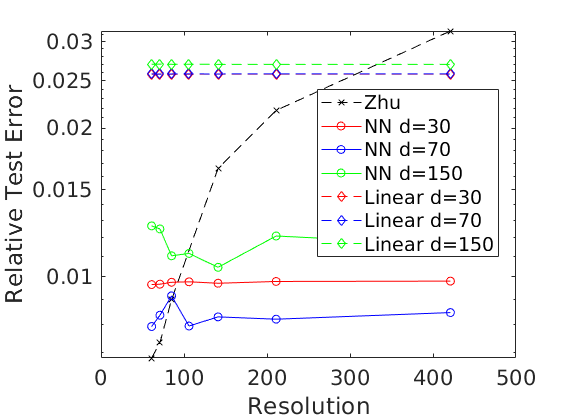}
        \caption{}
    \end{subfigure}
    \begin{subfigure}[b]{0.32\textwidth}
        \includegraphics[width=\textwidth]{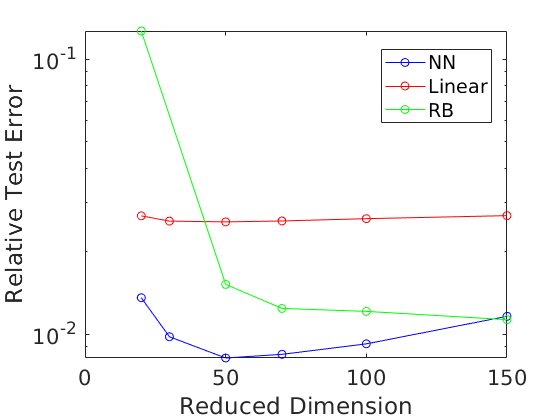}
        \caption{}
    \end{subfigure}
    \begin{subfigure}[b]{0.32\textwidth}
        \includegraphics[width=\textwidth]{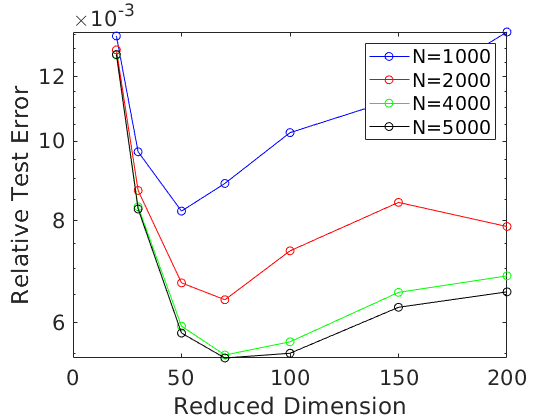}
        \caption{}
    \end{subfigure}

    \caption{Relative test errors on the Darcy flow problem with log-normal coefficients. Using \(N=1024\) training examples, panel (a)
    shows the errors as a function of the resolution while panel (b) fixes a \(421 \times 421\) mesh and shows
    the error as a function of the reduced dimension. Panel (c) only shows results for our method using a neural network, 
    fixing a \(421 \times 421\) mesh and showing the error as a function of the reduced dimension for 
    different amounts of training data.}
    \label{fig:lognormaldarcy}
\end{figure}

\begin{figure}[t]
    \centering
    \begin{subfigure}[b]{0.32\textwidth}
        \includegraphics[width=\textwidth]{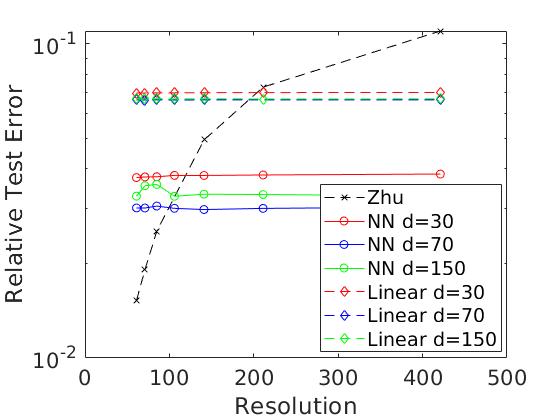}
        \caption{}
    \end{subfigure}
    \begin{subfigure}[b]{0.32\textwidth}
        \includegraphics[width=\textwidth]{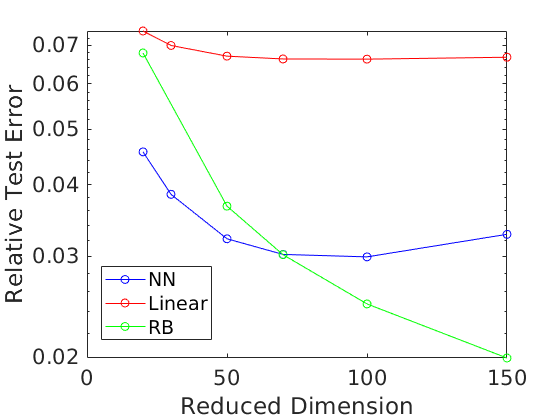}
        \caption{}
    \end{subfigure}
    \begin{subfigure}[b]{0.32\textwidth}
        \includegraphics[width=\textwidth]{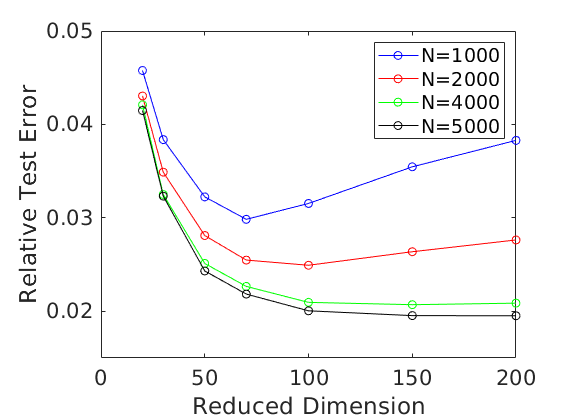}
        \caption{}
    \end{subfigure}

    \caption{Relative test errors on the Darcy flow problem with piecewise constant coefficients. Using \(N=1024\) training examples, panel (a)
    shows the errors as a function of the resolution while panel (b) fixes a \(421 \times 421\) mesh and shows
    the error as a function of the reduced dimension. Panel (c) only shows results for our method using a neural network, 
    fixing a \(421 \times 421\) mesh and showing the error as a function of the reduced dimension for 
    different amounts of training data.}
    \label{fig:piececonstdarcy} 
\end{figure}

\begin{figure}[t] 
    \centering
    \begin{subfigure}[b]{0.49\textwidth}
        \includegraphics[width=\textwidth]{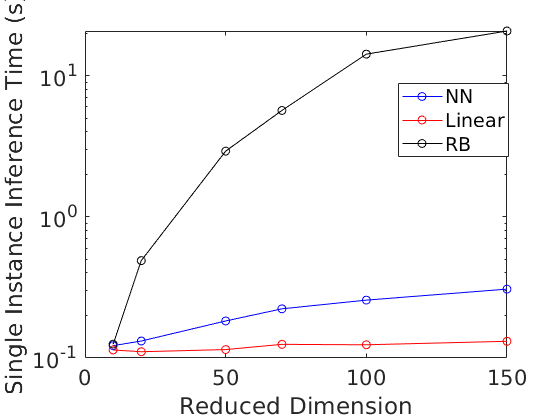}
        \caption{Online}
    \end{subfigure}
    \begin{subfigure}[b]{0.49\textwidth}
        \includegraphics[width=\textwidth]{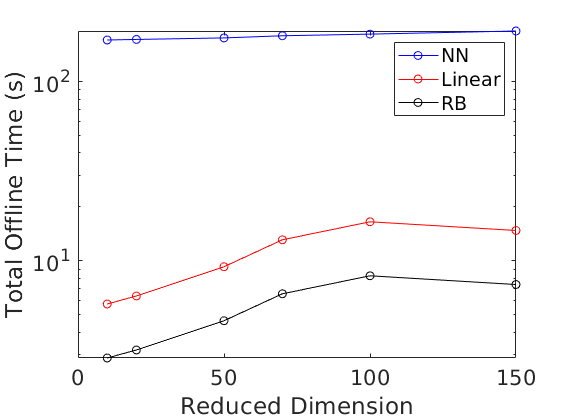}
        \caption{Offline}
    \end{subfigure}

    \caption{The online and offline computation times for the Darcy flow problem with piecewise constant coefficients.
    The number of training examples \(N=1024\) and grid resolution \(421 \times 421\) are fixed. The results are 
    reported in seconds and all computations are done on a single GTX 1080 Ti GPU. }
    \label{fig:timing}
\end{figure}

\begin{figure}[t]
    \centering
    \begin{subfigure}[b]{0.49\textwidth}
        \includegraphics[width=\textwidth]{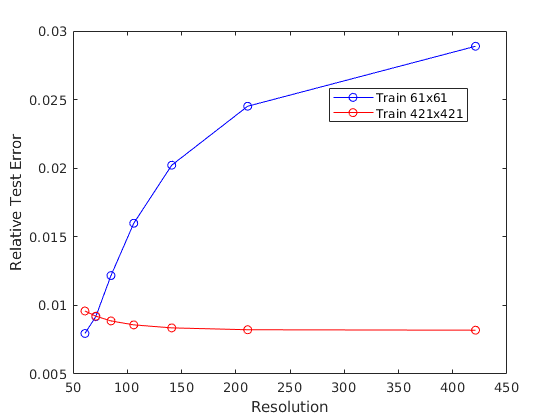}
        \caption{\(\mu_{\text{L}}\)}
    \end{subfigure}
    \begin{subfigure}[b]{0.49\textwidth}
        \includegraphics[width=\textwidth]{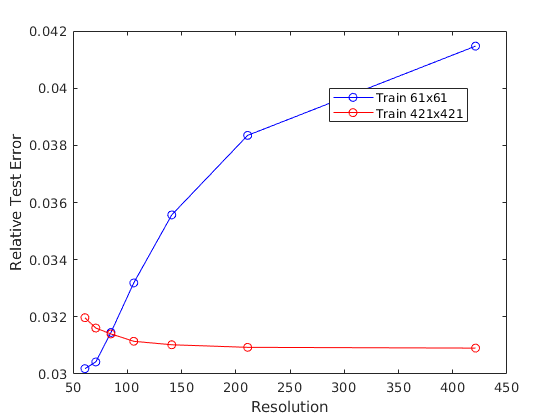}
        \caption{\(\mu_{\text{P}}\)}
    \end{subfigure}

    \caption{Relative test errors on both Darcy flow problems with reduced dimension \(d=70\), training on a single mesh and transferring the solution 
    to other meshes. When the training mesh is smaller than the desired output mesh, the PCA basis are interpolated using 
    cubic splines. When the training mesh is larger than the desired output mesh, the PCA basis are sub-sampled.}
    \label{fig:meshinterp}
\end{figure}

We now consider the input-output map
\(\Psi: L^\infty(\OOmega;\R_+) \to H^1_0(\OOmega;\R)\) mapping \(a \mapsto u\) in \eqref{eq:darcy}
with \(f(\s) = 1\) \(\forall \s \in \OOmega\) fixed. In this setting, 
the solution operator is nonlinear and is locally Lipschitz 
as a mapping from \(L^\infty(\OOmega;\R_+)\) 
to \(H^1_0(\OOmega;\R)\) \cite{DHS12}. 
However our results require a Hilbert space structure,
and we view the solution operator as a mapping from 
\(L^2(\OOmega;\R_+) \supset L^\infty(\OOmega;\R_+)\) into \(H^1_0(\OOmega;\R_+)\), noting that we will choose the probability measure $\mu$ on \(L^2(\OOmega;\R_+)\)
to satisfy $\mu(L^\infty(\OOmega;\R_+))=1.$
In this setting, $\Psi$ is not locally Lipschitz and 
hence Theorem \ref{thm:approximation} is
not directly applicable. Nevertheless, our methodology exhibits competitive numerical performance.
See rows 3 and 4 of Figure \ref{fig:examples} for an example.

Figure \ref{fig:lognormaldarcy} (a)
shows the relative test errors as a function of the resolution when \(a \sim \mu = \mul\) is log-normal while 
Figure \ref{fig:piececonstdarcy} (a) shows them when \(a \sim \mu = \mup\) is piecewise constant. In both settings,
we see that the error in our method is invariant to mesh-refinement. Since the problem is nonlinear,
the neural network outperforms the linear map. However we see the same issue 
as in Figure \ref{fig:ellipticproblem} where 
increasing the reduced dimension does not necessarily improve the error due to the increased complexity of the optimization problem. Panels (b) of Figures \ref{fig:lognormaldarcy} and  
\ref{fig:piececonstdarcy} confirm this observation. This issue can be alleviated with additional training data. Indeed,
 panels (c) of Figures \ref{fig:lognormaldarcy} and \ref{fig:piececonstdarcy} show that the error curve is flattened with 
 more data. We highlight that these results are consistent with our interpretation of
Theorem~\ref{thm:limit}: the reduced dimensions $\dX, \dY$ are 
determined first by the properties of the measure $\mu$ and its
pushforward,  and then the amount of data necessary is obtained to
ensure that the finite data approximation error is of the same
order of magnitude as the finite-dimensional approximation error.
In summary, the size of the training dataset $N$ should increase with the
number of reduced dimensions.

For this problem, we also compare to the reduced basis method (RB) when instantiated with 
PCA. We implement this by a standard Galerkin projection, expanding the solution in the PCA basis and 
using the weak form of \eqref{eq:darcy} to find the coefficients. We note that the errors of both methods are very close, but we find that the online runtime of our method is significantly better.
Letting \(K\) denote the mesh-size and \(d\) the reduced dimension, the reduced basis method has a runtime of \(\mathcal{O}(d^2K + d^3)\)
while our method has the runtime \(\mathcal{O}(dK)\) plus the runtime of the neural network which, in practice, we have found to be 
negligible. We show the online inference time as well as the offline training time of the methods in 
Figure \ref{fig:timing}. While the neural network has the highest offline cost, its small online cost 
makes it a more practical method. Indeed, without parallelization when \(d=150\), the total time (online and offline) 
to compute all 5000 test solutions is around 28 hours for the RBM. 
On the other hand, for the
neural network, it is 28 minutes. The difference is pronounced
when needing to compute many solutions in parallel. Since most modern architectures 
are able to internally parallelize matrix-matrix multiplication, the total time to train and compute the 5000 examples 
for the neural network is only 4 minutes. This issue can however be slightly alleviated 
for the reduced basis method with more stringent multi-core parallelization. We note that the linear map
has the lowest online cost and only a slightly worse offline cost 
than the RBM. This makes it the most 
suitable method for linear operators such as those presented in Section \ref{sec:numlip} or for applications 
where larger levels of approximation error can be tolerated.

We again note that the image-to-image regression approach of \cite{surrogatemodeling} does not scale 
with the mesh size. We do however acknowledge that for the small meshes for which the method was designed,
it does outperform all other approaches. This begs the question of whether one can design neural networks 
that match the performance of image-to-image regression but remain invariant with respect to the size 
of the mesh. The contemporaneous work \cite{neuralopour} takes a step in this direction.  

Lastly, we show that our method also has the ability to transfer a solution learned on one mesh to another.
This is done by interpolating or sub-sampling both of the input and output PCA basis from the 
training mesh to the desired mesh. Justifying this requires a smoothness assumption 
on the PCA basis; we are, however, not aware of any such results and believe this is an 
interesting future direction. The neural network is fixed and does not need to be re-trained on a new mesh.
We show this in Figure \ref{fig:meshinterp} for both Darcy flow problems. We note that when training on a 
small mesh, the error increases as we move to larger meshes, reflecting the interpolation error of the basis.
Nevertheless, this increase is rather small: as shown in Figure \ref{fig:meshinterp}, we obtain a \(3\%\) and 
a \(1\%\) relative error increasing when transferring solutions trained on a \(61 \times 61\) grid 
to a \(421 \times 421\) grid on each respective Darcy flow problem. On the other hand, when 
training on a large mesh, we see almost no error increase on the small meshes. This indicates that the 
neural network learns a property that is intrinsic to the solution operator and independent of the
discretization. 

\subsection{Burgers' Equation}
\label{sec:burgers}

\begin{figure}[t]
    \centering
    \begin{subfigure}[b]{0.32\textwidth}
        \includegraphics[width=\textwidth]{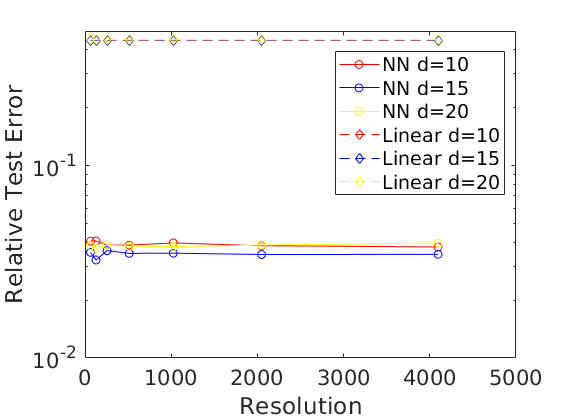}
        \caption{}
    \end{subfigure}
    \begin{subfigure}[b]{0.32\textwidth}
        \includegraphics[width=\textwidth]{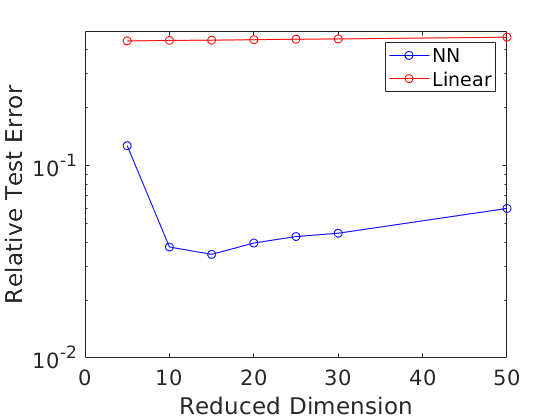}
        \caption{}
    \end{subfigure}
    \begin{subfigure}[b]{0.32\textwidth}
        \includegraphics[width=\textwidth]{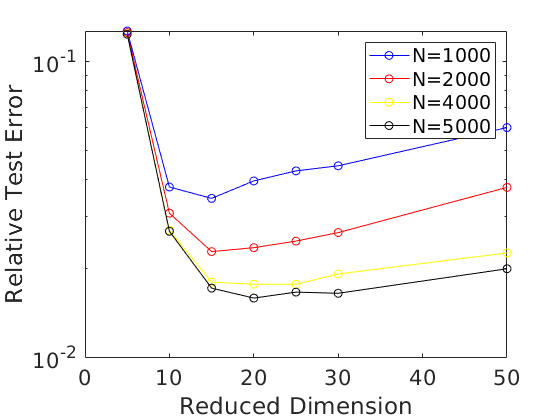}
        \caption{}
    \end{subfigure}

    \caption{Relative test errors on the Burgers' Equation problem. Using \(N=1024\) training examples, panel (a)
    shows the errors as a function of the resolution while panel (b) fixes a \(4096\) mesh and shows
    the error as a function of the reduced dimension. Panel (c) only shows results for our method using a neural network, 
    fixing a \(4096\) mesh and showing the error as a function of the reduced dimension for 
    different amounts of training data.}
    \label{fig:burgers}
\end{figure}

We now consider the input-output map \(\Psi :  L^2(\mathbb{T}^1;\R) \to H^r(\mathbb{T}^1;\R)\) 
mapping \(u_0 \mapsto u|_{t=1}\) in \eqref{eq:burgers} with \(\beta = 10^{-2}\) fixed. In this setting,
\(\Psi\) is nonlinear and locally Lipschitz but since we do not know the precise Lipschitz constant as defined in Appendix \ref{app:approxanalysis-local-Lipschitz},
we cannot verify that the assumptions of Theorem \ref{thm:approximation-local-lip} hold; nevertheless the numerical results
demonstrate the effectiveness of our methodology.  We take \(u_0 \sim \mu = \mu_{\text{B}}\); see rows 5 of Figure \ref{fig:examples} for an example.
Figure \ref{fig:burgers} (a) shows the relative test errors as a function of the resolution again demonstrating 
that our method is invariant to mesh-refinement. We note that, for this problem,
the linear map does significantly worse than the neural network in contrast to the Darcy flow
problem where the results were comparable. This is likely attributable
to the fact that the solution 
operator for Burgers' equation is more strongly nonlinear. As before, we observe from Figure \ref{fig:burgers} panel (b)
that increasing the reduced dimension does not necessarily improve the error due to the 
increased complexity of the optimization problem. This can again be mitigated by increasing the 
volume of training data, 
as indicated in Figure \ref{fig:burgers}(c); the curve of error
versus reduced dimension is flattened as \(N\) increases.

\section{Conclusion}
\label{sec:conclusion}
In this paper, we proposed a general data-driven methodology that can be used 
to learn mappings between separable Hilbert spaces. We proved 
consistency of the 
approach when instantiated with PCA in the setting of globally Lipschitz forward maps. 
We demonstrated the desired mesh-independent properties of our approach on 
parametric PDE problems, showing good numerical performance even on problems outside 
the scope of the theory. 

This work leaves many interesting directions open for future research. To understand 
the interplay between the reduced dimension and the amount of data needed requires a 
deeper understanding of neural networks and their interaction with the optimization algorithms
used to produce the approximation architecture. Even if the optimal neural
network is found by that optimization procedure, the question of the
number of parameters needed to achieve a given level of accuracy,
and how this interacts with the choice of reduced dimensions 
$\dX$ and $\dY$ (choice of which is determined by the input space
probability measure),
warrants analysis in order to reveal
the computational complexity of the proposed approach.  Furthermore, 
the use of PCA limits the scope of problems that can be addressed
to Hilbert, rather than general Banach spaces; even in Hilbert space,
PCA may not be the optimal choice of dimension reduction. The development of 
autoenconders on function space is a promising direction that has the potential
to address these issues; it also has many potential 
applications that are not limited to deployment within the methodology
proposed here. Finally we also
wish to study the use of our methodology in more challenging PDE problems, 
such as those arising in materials science, as well as for time-dependent 
problems such as multi-phase flow in porous media.
Broadly speaking  we view our contribution as a first step 
in the development of methods that generalize the ideas 
and applications of neural networks by operating on, and between,
spaces of functions.








\section*{Acknowledgments}
The authors are grateful to Anima  Anandkumar, Kamyar Azizzadenesheli, 
Zongyi Li and Nicholas H. Nelsen for helpful discussions in the general area 
of neural networks for PDE-defined maps between
Hilbert spaces. The authors thank Matthew M. Dunlop for sharing his code
for solving elliptic PDEs and generating Gaussian random fields. The work is supported by MEDE-ARL funding 
(W911NF-12-0022). AMS is also
partially supported by NSF (DMS 1818977) and AFOSR (FA9550-17-1-0185).
BH is partially supported by a 
Von K{\'a}rm{\'a}n instructorship at the California Institute of Technology.

\bibliographystyle{plain}
\bibliography{biblio}

\appendix

\section{Neural Networks And Approximation (Locally Lipschitz Case)}
\label{app:approxanalysis-local-Lipschitz}

This extends the approximation theory of Subsection~\ref{sec:approxanalysis} to the
case of solution maps 
\(\Psi: \X \to \Y\) that are \(\mu\)-measurable and locally Lipschitz
in the following sense 
\begin{equation}
\label{eq:Ldot}
\forall x,z \in \X\quad \|\Psi(x) - \Psi(z)\|_\Y \leq L(x,z) \|x-z\|_\X.
\end{equation}
where the function $L: \X \times \X \to \R_+$ is symmetric in its arguments, i.e., $L(x,z) = L(z,x)$,
and for any fixed $w \in \X$ the function $L(\cdot, w): \X \to \R_+$ is
 $\mu$-measurable and non-decreasing in the sense that $L(s, w) \le L(x, w)$ if $\| x\|_\X \ge \| w\|_\X$. 
Note that this implies that \(\Psi\) is locally bounded: for any \(x \in \X\)
\[\|\Psi(x)\|_\Y \le  \|\Psi(0)\|_\Y + \|\Psi(x) - \Psi(0)\|_\Y \leq \|\Psi(0)\|_\Y + L(x, 0) \|x\|_\X.\]
Hence we deduce that the pushforward \(\Psi_\sharp \mu\) has bounded fourth moments
provided that $\E_{x \sim \mu} (L(x, 0) \| x\|_\X)^4 < +\infty$:
\[
  \begin{aligned}
  \E_{y \sim \Psi_\sharp \mu} \|y\|^4_\Y &= \int_\X \|\Psi(x)\|_\Y^4 d\mu(x)
  \leq \int_\X (\|\Psi(0)\|_\Y + L(x, 0) \|x\|_\X)^4 d\mu(x) \\
  & \le 2^3 \left( \| \Psi(0)\|_\Y + \int_\X L(x, 0)^4 \| x \|^4_\X d\mu(x) \right)   < \infty,
\end{aligned}
\]
where we used a generalized triangle inequality proven in \cite[Cor. 3.1]{takahasi2010refined}.

\begin{theorem}
  \label{thm:approximation-local-lip}
Let \(\X\), \(\Y\) be real, separable Hilbert spaces, $\Psi$ a mapping
from $\X$ into $\Y$ and let \(\mu\) be a probability measure
supported on \(\X\) such that 
\begin{equation*}
  \E_{x \sim \mu}  L(x,x)^2 < +\infty, \quad \E_{x \sim \mu} L( x, 0)^2 \|x \|_\X^2 < \infty,
\end{equation*}
where $L(\cdot,\cdot)$ is given in \eqref{eq:Ldot}.
Fix $\dX, \dY$, $N \ge \max\{\dX, \dY\},$ $\delta \in (0,1)$, $\tau>0$
and define \(M = \sqrt{\E_{x \sim \mu} \|x\|_\X^2 / \delta}\). Then 
there exists a constant $c(\dX, \dY) \ge 0$ and  neural network
\(\chi \in \mathcal{M}(\dX, \dY; t, r,M)\)
with $t \le c ( \log (\sqrt{\dY}/ \tau) +1 )$ layers and
$r \le c (\epsilon^{-\dX} \log ( \sqrt{\dY}/\tau) + 1)$ 
active weights and biases in each component, so that 
\begin{equation}
\label{eq:approximationbound-loc-lip}
\begin{aligned}
\E_{\{x_j\} \sim \mu} \E_{x \sim \mu}\bigl(\en(x)\bigr) &\le C \Bigg( \tau + \sqrt{\delta} \\
& +  \left( \sqrt{\frac{\dX}{N}} + R^{\mu}(V_{\dX}^{\X})  \right)^{1/2}
+  \left( \sqrt{\frac{\dY}{N}} + R^{\Psi_\sharp \mu}(V_{\dY}^{\Y}) \right)^{1/2} \Bigg),
\end{aligned}
\end{equation}
where \(\en(x) := \|\Psin(x) - \Psi(x)\|_\Y\) and
$C > 0$ is independent of $\dX, \dY, N, \delta$ and $\epsilon$.
\end{theorem}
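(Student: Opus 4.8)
The plan is to follow the architecture of the proof of Theorem~\ref{thm:approximation}, but to estimate the \emph{first} power $\E_{\{x_j\}\sim\mu}\E_{x\sim\mu}\,\en(x)$ rather than its square. The reason is that the pointwise Lipschitz factor $L(x,z)$ must now be carried along: if one tried to bound $\E\E\,\en(x)^2$ one would be forced, after a single Cauchy--Schwarz, to control $\E_{x\sim\mu}L(x,x)^4$ together with the \emph{fourth} moment of the PCA projection error, neither of which is available; working with the first power, one Cauchy--Schwarz splits $L$ from the \emph{second}-moment projection error $\E[R^\mu(V_{\dX,N}^\X)]$, which is exactly what Theorem~\ref{thm:pca_generalization_bound} delivers. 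This is also what produces the square-root exponents on the last two terms of \eqref{eq:approximationbound-loc-lip}. After verifying (via the growth conditions on $L$ and the moment hypotheses, as in the preamble above) that Theorem~\ref{thm:pca_generalization_bound} applies both on $(\X,\mu)$ and on $(\Y,\Psi_\sharp\mu)$ with a common constant $Q$, I would start from $\en(x)\le\|\Psin(x)-\Psip(x)\|_\Y+\|\Psip(x)-\Psi(x)\|_\Y$ and bound the two terms separately, writing $\E$ for $\E_{\{x_j\}\sim\mu}$.

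For the $\Psip$ term I would mimic \eqref{eq:truephiapprox}: since $\Psip(x)=\Pi_{V_{\dY,N}^\Y}\Psi(\Pi_{V_{\dX,N}^\X}x)$, insert $\Pi_{V_{\dY,N}^\Y}\Psi(x)$, use that $\Pi_{V_{\dY,N}^\Y}$ is a contraction, and apply \eqref{eq:Ldot} to get
\[\|\Psip(x)-\Psi(x)\|_\Y\le L(\Pi_{V_{\dX,N}^\X}x,x)\,\|\Pi_{V_{\dX,N}^\X}x-x\|_\X+\|\Pi_{V_{\dY,N}^\Y}\Psi(x)-\Psi(x)\|_\Y.\]
Since $\|\Pi_{V_{\dX,N}^\X}x\|_\X\le\|x\|_\X$ and $L(\cdot,w)$ is non-decreasing in the norm of its argument, $L(\Pi_{V_{\dX,N}^\X}x,x)\le L(x,x)$; then Cauchy--Schwarz in $(x,\{x_j\})$ gives
\[\E\E\,L(x,x)\|\Pi_{V_{\dX,N}^\X}x-x\|_\X\le\bigl(\E_{x\sim\mu}L(x,x)^2\bigr)^{1/2}\bigl(\E[R^\mu(V_{\dX,N}^\X)]\bigr)^{1/2},\]
while $\E\E\|\Pi_{V_{\dY,N}^\Y}\Psi(x)-\Psi(x)\|_\Y\le(\E[R^{\Psi_\sharp\mu}(V_{\dY,N}^\Y)])^{1/2}$ by Jensen. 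Theorem~\ref{thm:pca_generalization_bound} then converts $\E[R^\mu(V_{\dX,N}^\X)]$ and $\E[R^{\Psi_\sharp\mu}(V_{\dY,N}^\Y)]$ into $\sqrt{Q\dX/N}+R^\mu(V_{\dX}^\X)$ and $\sqrt{Q\dY/N}+R^{\Psi_\sharp\mu}(V_{\dY}^\Y)$, yielding the last two terms of \eqref{eq:approximationbound-loc-lip}.

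For the $\Psin$-versus-$\Psip$ term I would first note, as in Lemma~\ref{l:add}, that $\varphi|_{[-M,M]^{\dX}}$ is Lipschitz: $\GX$ maps $[-M,M]^{\dX}$ into the ball of radius $M\sqrt{\dX}$ in $\X$, on which the local Lipschitz constant of $\Psi$ is bounded by $L(z_M,z_M)$ with $\|z_M\|_\X=M\sqrt{\dX}$ (using symmetry and monotonicity of $L$), so each component $\varphi^{(j)}$ is $L_\varphi$-Lipschitz there. Applying \cite[Thm.~1]{yarotsky} to the rescaled components $\tilde\varphi^{(j)}$ on $[0,1]^{\dX}$ with tolerance $\tau/(2M\sqrt{\dY})$, then stacking and extending by zero outside $[-M,M]^{\dX}$ exactly as in the proof of Theorem~\ref{thm:approximation}, produces $\chi\in\mathcal{M}(\dX,\dY;t,r,M)$ with $\sup_{s\in[-M,M]^{\dX}}|\chi(s)-\varphi(s)|_2<\tau$ and $t,r$ of the stated orders, the dependence of the constant on $M$ and $L_\varphi$ being absorbed into $c$ (which is thus only nominally a function of $\dX,\dY$). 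Splitting over $A=\{x:\FX(x)\in[-M,M]^{\dX}\}$, Lemma~\ref{lemma:bounded_projection} gives $\mu(A^c)\le\delta$ for $M=\sqrt{\E_{x\sim\mu}\|x\|_\X^2/\delta}$; on $A$, $\|\Psin(x)-\Psip(x)\|_\Y\le\tau$ by the unit Lipschitz constant of $\GY$ and the $\tau$-closeness of $\chi$; on $A^c$, $\chi\equiv0$, so $\|\Psin(x)-\Psip(x)\|_\Y\le|\varphi(\FX(x))|_2=\|\Pi_{V_{\dY,N}^\Y}\Psi(\Pi_{V_{\dX,N}^\X}x)\|_\Y\le\|\Psi(0)\|_\Y+L(x,0)\|x\|_\X$ (using $\|\Pi_{V_{\dX,N}^\X}x\|_\X\le\|x\|_\X$ and monotonicity of $L(\cdot,0)$). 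Integrating over $A^c$ and applying Cauchy--Schwarz, $\int_{A^c}(\|\Psi(0)\|_\Y+L(x,0)\|x\|_\X)\,d\mu\le\|\Psi(0)\|_\Y\delta+\delta^{1/2}(\E_{x\sim\mu}L(x,0)^2\|x\|_\X^2)^{1/2}$, which is $\lesssim\sqrt\delta$ since $\delta\in(0,1)$.

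Combining the three estimates and absorbing $\|\Psi(0)\|_\Y$, $(\E_{x\sim\mu}L(x,x)^2)^{1/2}$, $(\E_{x\sim\mu}L(x,0)^2\|x\|_\X^2)^{1/2}$ and $\max(Q,1)$ into a single constant $C$ independent of $\dX,\dY,N,\delta,\tau$ gives \eqref{eq:approximationbound-loc-lip}. The main obstacle I anticipate is exactly the bookkeeping around the variable Lipschitz factor: making rigorous the monotonicity step $L(\Pi_{V_{\dX,N}^\X}x,x)\le L(x,x)$ and the boundedness of $L$ on $\GX([-M,M]^{\dX})$ from the stated hypotheses on $L$, and recognizing that since only second moments of $L$ are assumed one must estimate the first power of the error — hence accept the square-root exponents on the PCA terms. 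Everything else is a faithful transcription of the globally Lipschitz argument.
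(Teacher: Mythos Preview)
Your proposal is correct and follows essentially the same approach as the paper's own proof: bounding the first power of $\en$ via the triangle inequality, handling the $\Psip$ term by inserting $\Pi_{V_{\dY,N}^\Y}\Psi(x)$, using the monotonicity of $L$ to replace $L(\Pi_{V_{\dX,N}^\X}x,x)$ by $L(x,x)$ before a single Cauchy--Schwarz, and then treating the $A/A^c$ split for the neural-network term exactly as in Theorem~\ref{thm:approximation}. Your bound of $|\varphi(\FX(x))|_2$ on $A^c$ via $\|\Psi(0)\|_\Y+L(x,0)\|x\|_\X$ is a slightly cleaner variant of the paper's use of $|\varphi(0)|_2+L(\GX(\FX(x)),0)|\FX(x)|_2$, but the two are equivalent up to constants.
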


\begin{proof}
  Our method of proof is similar to the proof of Theorem~\ref{thm:approximation} and
  for this reason we shorten some of the arguments. 
  Recall the constant $Q$ from Theorem \ref{thm:pca_generalization_bound}. In
what follows we take $Q$ to be the maximum of the two such constants
when arising from application of the theorem on the two different probability
spaces $(\X,\mu)$ and $(\Y, \Psi_\sharp \mu)$. As usual we employ the shorthand notation $\E$
to denote $\E_{\{x_j \} \sim \mu}$ the expectation with respect to the dataset $\{ x_j\}_{j=1}^N$.

We begin by approximating the error incurred by using \(\Psip\) given by
\eqref{eq:apsipca}:
\begin{align}
\label{eq:truephiapprox-local-lip}
\begin{split}
  \E & \E_{x \sim \mu} \|\Psip(x) - \Psi(x)\|_\Y
  = \E \E_{x \sim \mu} \|(\GY \circ \FY \circ \Psi \circ \GX \circ \FX)(x) - \Psi(x)\|_\Y \\
&= \E \E_{x \sim \mu} \left\|\Pi_{V_{\dY,N}^\Y} \Psi (\Pi_{V_{\dX,N}^\X} x) - \Psi(x) \right\|_\Y \\
&\leq \E \E_{x \sim \mu} \left\|\Pi_{V_{\dY,N}^\Y} \Psi(\Pi_{V_{\dX,N}^\X} x) - \Pi_{V_{\dY,N}^\Y} \Psi(x) \right\|_\Y + \E \E_{x \sim \mu}\left \|\Pi_{V_{\dY,N}^\Y} \Psi(x) - \Psi(x) \right\|_\Y \\
& \leq \E \E_{x \sim \mu} L(\Pi_{V_{\dX,N}^\X} x, x)  \left\|\Pi_{V_{\dX,N}^\X} x - x \right\|_\X + \E \E_{y \sim \Psi_\sharp \mu} \left\|\Pi_{V_{\dY,N}^\Y} y - y \right\|_\Y,
\end{split}
\end{align}
noting that the operator norm of an orthogonal projection is \(1\).
Now since $L(\cdot, x)$ is non-decreasing we infer that $L(\Pi_{V_{\dX,N}^\X} x, x) \le L(x,x)$
and then using Cauchy-Schwarz we obtain
\begin{align}
  \begin{split}
    \E \E_{x \sim \mu} \|\Psip(x) - \Psi(x)\|_\Y
    & \leq \left(\E_{x \sim \mu} |L(x,x)|^2 \right)^{1/2}
    \left( \E \E_{x \sim \mu} \left\|\Pi_{V_{\dX,N}^\X} x - x \right\|_\X^2 \right)^{1/2} \\
    &\:\:\:\:+ \E \E_{y \sim \Psi_\sharp \mu} \left\|\Pi_{V_{\dY,N}^\Y} y - y \right\|_\Y,\\
    &= L' \left( \E R^{\mu}(V_{\dX,N}^\X) \right)^{1/2} 
 + \left( \E [R^{\Psi_\sharp \mu}(V_{\dY,N}^\Y)] \right)^{1/2}
\end{split}
\end{align}
where we used H{\"o}lder's inequality in the last line and defined the new constant 
$
L' := \left(\E_{x \sim \mu} |L(x,x)|^2 \right)^{1/2}.
$
Theorem \ref{thm:pca_generalization_bound}
allows us to control this error, and leads to 
\begin{align}
\label{eq:truephiapprox2-local-lip}
\begin{split}
\E \E_{x \sim \mu} \en & \le \E \E_{x \sim \mu}\|\Psin(x) - \Psip(x)\|_\Y+ \E \E_{x \sim \mu} \|\Psip(x) - \Psi(x)\|_\Y\\
&\le  \E_{x \sim \mu}\|\Psin(x) - \Psip(x)\|_\Y\\
&\quad+ L' \left( \sqrt{\frac{Q \dX}{N}} + R^{\mu}(V_{\dX}^\X) \right)^{1/2}+
\left(\sqrt{\frac{Q \dY}{N}} + R^{\Psi_\sharp \mu}(V_{\dY}^\Y)\right)^{1/2}.
\end{split}
\end{align}

We now approximate \(\varphi\) by a neural network \(\chi\) as before. 
To that end we first note from Lemma \ref{l:add} that 
\(\varphi\) is locally Lipschitz, and hence continuous, as a mapping from $\R^{\dX}$
into $\R^{\dY}.$ 
Identify the components \(\varphi(s) = (\varphi^{(1)}(s), \dots, \varphi^{(\dY)}(s))\)
where each function \(\varphi^{(j)} \in C(\R^{\dX};\R)\).
We consider the restriction of each component function to the set  \([-M,M]^{\dX}\).

By \cite[Thm.~1]{yarotsky} and using the same arguments as in the proof of
Theorem~\ref{thm:approximation} there exist neural networks
\(\chi^{(1)}, \dots, \chi^{(\dY)} : \R^{\dX} \to \R\),
$\chi^{(j)} \in \mathcal{M}(\dX; t^{(j)}, r^{(j)})$, with layer and active weight parameters $t^{(j)}$ and $r^{(j)}$ satisfying 
$ t^{(j)} \le c^{(j)} [ \log( M\sqrt{\dY} / \tau) + 1 ],$ and 
 $ r^{(j)} \le c^{(j)}( \tau/2M)^{- \dX} [ \log( M\sqrt{\dY} /\tau ) +1  ]$
with constants $c^{(j)}(\dX) >0$, so that
\begin{equation*}
  \big| \big( \chi^{(1)}(s), \dots, \chi^{(\dY)}(s) \big)  - \varphi(s) \big|_2 < \tau
  \quad \forall s \in [-M,M]^{\dX}.
\end{equation*}

We  now simply 
define \(\chi: \R^{\dX} \to \R^{\dY}\) as the  stacked network
\((\chi^{(1)},\dots,\chi^{\dY})\) extended by zero outside of $[-M, M]^{\dX}$
to  immediately obtain
\begin{equation}
\label{eq:nnepsilonclose-local-lip}
\sup_{ s \in [-M,M]^{\dX}}
\big| \chi(s)  - \varphi(s) \big|_2 < \tau. 
\end{equation}
Thus, by construction 
$\chi \in \mathcal{M}(\dX, \dY, t, r, M)$ with at most
$t \le \max_j t^{(j)}$ many layers and
$r \le r^{(j)}$ 
many active weights and biases in each of its components.

Define the set 
$A = \{x \in \X : \FX(x) \in [-M,M]^{\dX}\}.$
By Lemma \ref{lemma:bounded_projection} below, \(\mu(A) \geq 1 - \delta\) and
\(\mu(A^c) \leq \delta\).  Define the approximation error 
$\ep(x) := \|\Psin(x) - \Psip(x)\|_\Y$
and decompose its expectation as 
\[\E_{x \sim \mu}[\ep(x)] = \int_A \ep(x) d\mu(x) + \int_{A^c} \ep(x) d\mu(x)=: I_A + I_{A^c}.\]
For the first term,
\begin{align}
\label{eq:erroronA-local-lip}
\begin{split}
  I_A &\leq  \int_A  \|(\GY \circ \chi \circ \FX)(x)
  - (\GY \circ \varphi \circ \FX)(x)\|_\Y d\mu(x) \leq  \tau, 
\end{split}
\end{align}
by using the fact, established in Lemma \ref{l:add}, that
\(\GY\) is Lipschitz with Lipschitz constant $1$, the
\(\tau\)-closeness of \(\chi\) 
to \(\varphi\) from \eqref{eq:nnepsilonclose-local-lip}, and \(\mu(A) \leq 1\). 
For the second term we have, using that
$\GY$ has Lipschitz constant $1$ and that $\chi$ takes value zero on $A^c$, 
\begin{align}
\label{eq:erroroutsideA-local-lip}
\begin{split}
I_{A^c} &\leq  \int_{A^c}  \|(\GY \circ \chi \circ \FX)(x) - (\GY \circ \varphi \circ \FX)(x)\|_\Y d\mu(x) \\ 
&\leq  \int_{A^c} | \chi(\FX(x)) - \varphi(\FX(x))|_2 d\mu(x)= \int_{A^c} | \varphi(\FX(x))|_2 d\mu(x).
\end{split}
\end{align}
Once more from Lemma \ref{l:add}  we have that
\[|\FX(x)|_2 \leq \|x\|_{\X}; \quad |\varphi(v)|_2 \leq |\varphi(0)|_2 + L(\GX(v), 0) |v|_2\] 
so that, using the hypothesis on $L$ and global Lipschitz property of $\FX$ and $\GX$ we can write
\begin{align}
\label{eq:erroroutsideA2-local-lip}
\begin{split}
  I_{A^c} & \leq \mu(A^c)|\varphi(0)|_2
  +   \int_{A^c}  L( x, 0) | \FX(x)|_2 d\mu(x) \\
  & \leq \mu(A^c) |\varphi(0)|_2
  + \int_{A^c}  L(x, 0) \| x \|_\X d\mu(x)\\
  & \le  \mu(A^c) |\varphi(0)|_2
  +  \mu(A^c)^{\frac12}
  \left( \E_{x \sim \mu}  L( x, 0)^2 \| x \|^2_\X\right)^{1/2} \\
 &\le  \delta |\varphi(0)|_2
 + \delta^{\frac12} L'' \\
 &\le \sqrt{\delta}(|\varphi(0)|_2 + L'')
\end{split}
\end{align}
where $L'' := \left( \E_{x \sim \mu}  L( x, 0)^2 \| x \|^2_\X\right)^{1/2}$. 
Combining \eqref{eq:truephiapprox2}, \eqref{eq:erroronA} 
and \eqref{eq:erroroutsideA2} we obtain the desired result. 
{}
\end{proof}




\section{Supporting Lemmas}
\label{app:supportlemmas}

In this Subsection we present and prove auxiliary lemmas that are used
throughout the proofs in the article.
The proof of Theorem \ref{thm:pca_generalization_bound}
made use of the following proposition, known as Fan's Theorem, 
proved originally in \cite{Fan}.  We state and prove it here in the 
infinite-dimensional setting as this generalization may be  of independent interest.
Our proof follows the steps
of Fan's original proof in the finite-dimensional setting. The work \cite{Fannew}, 
through which we first became aware of Fan's result, gives an elegant 
generalization; however it is unclear whether that approach is easily applicable
in  infinite dimensions due to issues of compactness.

\begin{lemma}[Fan \cite{Fan}]
\label{thm:fan}
Let \((\mathcal{H}, \langle \cdot, \cdot \rangle, \|\cdot\|)\) be a separable Hilbert space and
\(C : \mathcal{H} \to \mathcal{H}\) a non-negative, self-adjoint, compact operator. Denote by
\(\lambda_1 \geq \lambda_2 \geq \dots\) the eigenvalues of \(C\) and, for any \(d \in \mathbb{N} \setminus \{0\}\), let 
\(S_d\) denote the set of collections of \(d\) orthonormal elements of \(\mathcal{H}\). Then
\[\sum_{j=1}^d \lambda_j = \max_{\{u_1,\dots,u_d\} \in S_d} \sum_{j=1}^d \langle Cu_j, u_j \rangle.\]
\end{lemma}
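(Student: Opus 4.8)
The plan is to prove Fan's theorem by two inequalities, adapting the classical finite-dimensional argument to the Hilbert space setting using the spectral theorem for compact self-adjoint operators.

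\textbf{Setup via the spectral theorem.} First I would invoke the spectral theorem for the non-negative, self-adjoint, compact operator $C$: there is an orthonormal system $\{\phi_j\}_{j\geq 1}$ of eigenvectors with $C\phi_j = \lambda_j \phi_j$, $\lambda_1 \geq \lambda_2 \geq \dots \geq 0$, and $Cv = \sum_{j\geq 1} \lambda_j \langle v, \phi_j\rangle \phi_j$ for all $v \in \mathcal{H}$ (the $\phi_j$ span $\overline{\mathrm{Ran}(C)}$, and $\ker C$ contributes nothing to the sums below). The ``$\geq$'' direction is immediate: taking $u_j = \phi_j$ for $j=1,\dots,d$ gives an admissible collection in $S_d$ with $\sum_{j=1}^d \langle C\phi_j, \phi_j\rangle = \sum_{j=1}^d \lambda_j$, so the supremum is at least $\sum_{j=1}^d\lambda_j$.

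\textbf{The upper bound.} For the reverse inequality, fix any orthonormal collection $\{u_1,\dots,u_d\} \in S_d$. Writing $u_k = \sum_{j\geq 1}\langle u_k,\phi_j\rangle\phi_j + w_k$ with $w_k \in \ker C$, set $a_{kj} = \langle u_k, \phi_j\rangle$ so that $\langle Cu_k, u_k\rangle = \sum_{j\geq 1}\lambda_j a_{kj}^2$. Hence
\[
\sum_{k=1}^d \langle Cu_k, u_k\rangle = \sum_{j\geq 1} \lambda_j \, m_j, \qquad m_j := \sum_{k=1}^d a_{kj}^2.
\]
The key structural facts are: (i) $0 \le m_j \le 1$ for every $j$, because $\sum_{k=1}^d a_{kj}^2 = \sum_{k=1}^d \langle u_k,\phi_j\rangle^2 = \|\Pi_{U}\phi_j\|^2 \le \|\phi_j\|^2 = 1$, where $U = \mathrm{span}\{u_1,\dots,u_d\}$ and $\Pi_U$ is the orthogonal projection onto $U$ (this uses that the $u_k$ are orthonormal); and (ii) $\sum_{j\geq 1} m_j = \sum_{k=1}^d \sum_{j\geq 1} a_{kj}^2 = \sum_{k=1}^d \|\Pi_{\overline{\mathrm{Ran} C}} u_k\|^2 \le \sum_{k=1}^d \|u_k\|^2 = d$. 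Given these two constraints, the quantity $\sum_j \lambda_j m_j$ with $\lambda_j$ non-increasing is maximized by putting as much mass as possible on the smallest indices, i.e.\ $\sum_{j\geq 1}\lambda_j m_j \le \sum_{j=1}^d \lambda_j$. This last step is an elementary rearrangement/Abel-summation argument: since $\lambda_j \ge 0$ is non-increasing and $\sum_{j>d}m_j \le d - \sum_{j\le d} m_j = \sum_{j\le d}(1-m_j)$, one bounds $\sum_{j>d}\lambda_j m_j \le \lambda_d \sum_{j>d} m_j \le \lambda_d \sum_{j\le d}(1-m_j) \le \sum_{j\le d}\lambda_j(1-m_j)$, which rearranges to the claim; convergence of all series is guaranteed since $C$ is trace-class in our applications, but more simply because $\lambda_j \to 0$ and $\sum_j m_j < \infty$, so $\sum_j \lambda_j m_j$ converges absolutely. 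Taking the supremum over $\{u_1,\dots,u_d\}\in S_d$ finishes the proof.

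\textbf{Main obstacle.} The only place where the infinite-dimensional setting requires care is justifying the manipulations of infinite series — reordering $\sum_k\sum_j a_{kj}^2$, the identity $\sum_j a_{kj}^2 = \|\Pi_{\overline{\mathrm{Ran}C}} u_k\|^2$, and absolute convergence of $\sum_j \lambda_j m_j$ — all of which follow from Parseval's identity for the orthonormal system $\{\phi_j\}$, non-negativity of every term (permitting Tonelli-type reordering), and $\lambda_j \to 0$ together with $0\le m_j\le 1$, $\sum_j m_j \le d$. The compactness hypothesis is exactly what supplies the countable eigenbasis and $\lambda_j \to 0$; no issue of attaining the supremum arises because we only need the value of the supremum, which is attained at $u_j = \phi_j$. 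I would therefore organize the write-up as: (1) spectral decomposition; (2) lower bound by plugging in eigenvectors; (3) the two constraints on $(m_j)$; (4) the rearrangement lemma; (5) conclusion.
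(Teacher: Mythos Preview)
Your proof is correct and follows essentially the same route as the paper's: both establish the lower bound by plugging in the eigenvectors $\phi_1,\dots,\phi_d$, and both establish the upper bound via the same rearrangement argument based on the two constraints $0\le m_j\le 1$ and $\sum_j m_j \le d$ (the paper organizes this by first bounding each $\langle Cu_j,u_j\rangle$ separately using a $\pm\lambda_d$ trick before summing, but the algebra is identical). Your treatment of $\ker C$ is in fact slightly more careful than the paper's, which tacitly assumes the $\phi_j$ can be taken to span all of $\mathcal{H}$.
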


\begin{proof}
Let \(\phi_1,\phi_2,\dots\) denote the orthonormal eigenfunctions of \(C\) corresponding to the 
eigenvalues \(\lambda_1,\lambda_2,\dots\) respectively. Note that for \(\{\phi_1,\dots,\phi_d\} \in S_d\), we have
\[\sum_{j=1}^d \langle C \phi_j, \phi_j \rangle = \sum_{j=1}^d \lambda_j \|\phi_j\|^2 = \sum_{j=1}^d \lambda_j.\]
Now let \(\{u_1,\dots,u_d\} \in S_d\) be arbitrary. Then for any \(j \in \{1,\dots,d\}\), we have
$u_j = \sum_{k=1}^\infty \langle u_j, \phi_k \rangle \phi_k$
and thus
\begin{align*}
\langle Cu_j, u_j \rangle &= \sum_{k=1}^\infty \lambda_k |\langle u_j, \phi_k \rangle|^2 \\
&= \lambda_d \sum_{k=1}^\infty |\langle u_j, \phi_k \rangle|^2 + \sum_{k=d+1}^\infty (\lambda_k - \lambda_d)|\langle u_j, \phi_k \rangle|^2 + \sum_{k=1}^d (\lambda_k - \lambda_d)|\langle u_j, \phi_k \rangle|^2.
\end{align*}
Since \(\|u_j\|^2 = 1\), we have 
$\|u_j\|^2 = \sum_{k=1}^\infty |\langle u_j, \phi_k \rangle|^2 = 1$
therefore 
\begin{align*}
\lambda_d \sum_{k=1}^\infty |\langle u_j, \phi_k \rangle|^2 + \sum_{k=d+1}^\infty (\lambda_k - \lambda_d)|\langle u_j, \phi_k \rangle|^2 &= \lambda_d \sum_{k=1}^d |\langle u_j, \phi_k \rangle|^2 + \sum_{k=d+1}^\infty \lambda_k |\langle u_j, \phi_k \rangle|^2 \\
&\leq \lambda_d \sum_{k=1}^\infty |\langle u_j, \phi_k \rangle|^2 = \lambda_d
\end{align*}
using the fact that \(\lambda_k \leq \lambda_d\), \(\forall k > d\). We have shown
$\langle Cu_j, u_j \rangle \leq \lambda_d + \sum_{k=1}^d (\lambda_k - \lambda_d) |\langle u_j, \phi_k \rangle|^2.$
Thus
\begin{align*}
\sum_{j=1}^d (\lambda_j - \langle Cu_j, u_j \rangle) &\geq \sum_{j=1}^d \left( \lambda_j - \lambda_d - \sum_{k=1}^d (\lambda_k - \lambda_d) |\langle u_j, \phi_k \rangle|^2 \right) \\
&= \sum_{j=1}^d (\lambda_j - \lambda_d) \left ( 1 - \sum_{k=1}^d |\langle u_k, \phi_j \rangle|^2 \right ).
\end{align*}
We now extend the finite set of $\{u_k\}_{k=1}^d$ from a $d-$dimensional
orthonormal set to an orthonormal basis $\{u_k\}_{k=1}^\infty$ for $\mathcal{H}$.
Note that \(\lambda_j \geq \lambda_d\),  \(\forall j \leq d\) and that 
\[\sum_{k=1}^d |\langle u_k, \phi_j \rangle|^2 \leq \sum_{k=1}^\infty |\langle u_k, \phi_j \rangle|^2 = \|\phi_j\|^2 = 1\]
therefore
$\sum_{j=1}^d (\lambda_j - \langle Cu_j, u_j \rangle) \geq 0$
concluding the proof.

\end{proof}

Theorem \ref{thm:pca_generalization_bound} relies on a Monte Carlo estimate of the Hilbert-Schmidt distance
between \(C\) and \(C_N\) that we state and prove  below.

\begin{lemma}
\label{lemma:convariancemontecarlo}
Let \(C\) be given by \eqref{eq:covaraince} and \(C_N\) by \eqref{eq:empiricalcovariance} then there exists a constant \(Q \geq 0\), depending only on \(\nu\), such that
\begin{equation*}
\E_{\{u_j\} \sim \nu} \|C_N - C\|_{HS}^2 = \frac{Q}{N}.
\end{equation*}
\end{lemma}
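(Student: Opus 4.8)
The plan is to recognize $\|C_N - C\|_{HS}^2$ as the squared Monte Carlo error for the $\HS$-valued empirical mean of the i.i.d.\ random operators $Z_j \coloneqq u_j \otimes u_j$, and to evaluate its expectation by expanding the square and using independence, exactly as one does for the variance of a scalar sample mean.

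First I would record the integrability that is needed. Since $\|u \otimes u\|_{HS} = \|u\|^2$, the standing hypothesis $\E_{u \sim \nu}\|u\|^4 < \infty$ from Subsection~\ref{sec:functionalpca} yields $\E_{u\sim\nu}\|Z_1\|_{HS}^2 < \infty$, hence also $\E\|Z_1\|_{HS}<\infty$. Thus the Bochner expectation $C = \E_{u\sim\nu}[u\otimes u] \in \HS$ is well defined --- it is the operator identified in \eqref{eq:covaraince} --- and each $Z_j - C$ is a mean-zero $\HS$-valued random variable with finite second moment.

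Next I would write $C_N - C = \tfrac1N\sum_{j=1}^N (Z_j - C)$ and expand
\begin{equation*}
\E_{\{u_j\}\sim\nu}\|C_N - C\|_{HS}^2 = \frac{1}{N^2}\sum_{j=1}^N\sum_{k=1}^N \E\,\langle Z_j - C,\, Z_k - C\rangle_{HS}.
\end{equation*}
For $j\neq k$, independence of $Z_j$ and $Z_k$ together with $\E[Z_j-C]=0$ makes the corresponding term vanish --- the $\HS$ inner product is a bounded bilinear form, so $\E\langle Z_j-C,\,Z_k-C\rangle_{HS} = \langle\E[Z_j-C],\,\E[Z_k-C]\rangle_{HS}=0$ --- while the $N$ diagonal terms are all equal by identical distribution. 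This gives the claimed identity with
\begin{equation*}
Q \coloneqq \E_{u\sim\nu}\|u\otimes u - C\|_{HS}^2 \le 2\,\E_{u\sim\nu}\|u\|^4 + 2\|C\|_{HS}^2 < \infty,
\end{equation*}
a non-negative quantity depending only on $\nu$, as required.

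I expect the only subtlety to be measure-theoretic bookkeeping: one must check that $u\mapsto u\otimes u$ is a Bochner-measurable $\HS$-valued map with square-integrable norm, so that the sample mean, its expectation $C$, and the interchange of $\E$ with the bilinear $\HS$ pairing above are all legitimate. Given the fourth-moment hypothesis this is routine (cf.\ \cite{Baxendale,zeidler2012appliedfunc}), but it is precisely the point at which that hypothesis is used.
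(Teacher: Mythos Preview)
Your proposal is correct and essentially identical to the paper's proof: both treat $C_N - C$ as the centered sample mean of the i.i.d.\ $\HS$-valued random operators $u_j\otimes u_j$, expand the squared norm, and use independence to arrive at $Q = \E_{u\sim\nu}\|u\otimes u - C\|_{HS}^2$. The only cosmetic difference is that the paper first applies the variance identity $\E\|C_N-C\|_{HS}^2 = \E\|C_N\|_{HS}^2 - \|C\|_{HS}^2$ and then expands $\E\|C_N\|_{HS}^2$, whereas you center each summand before expanding; both routes are the standard Monte Carlo variance computation.
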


\begin{proof}
Define \(C^{(u_j)} := u_j \otimes u_j\) for any \(j \in \{1,\dots,N\}\) and \(C^{(u)} \coloneqq u \otimes u\) for any \(u \in \mathcal{H}\), noting that
\(\E_{u \sim \nu} [C^{(u)}] = C = \E_{u_j \sim \nu} [C^{(u_j)}]\).
Further we note that
\begin{equation*}
\E_{u \sim \nu} \|C^{(u)}\|^2_{HS} = \E_{u \sim \nu} \|u\|^4 < \infty
\end{equation*}
and, by Jensen's inequality,
$
\|C\|^2_{HS} \leq \E_{u \sim \nu} \|C^{(u)}\|^2_{HS} < \infty.
$
Once again using the shorthand notation $\E$ in place of  $\E_{\{u_j\} \sim \nu}$ 
we compute,
\begin{align*}
  \E \|C_N - C\|_{HS}^2
  &=  \E \| \frac{1}{N} \sum_{j=1}^N C^{(u_j)} - C \|^2_{HS}
  = \E \| \frac{1}{N} \sum_{j=1}^N C^{(u_j)} \|^2_{HS} - \|C\|^2_{HS} \\
  &= \frac{1}{N} \E_{u \sim \nu} \|C^{(u)}\|_{HS}^2 
    + \frac{1}{N^2} \sum_{j=1}^N \sum_{k \neq j}^N \langle \E[C^{(u_j)}], \E[C^{(u_k)}] \rangle_{HS} - \|C\|^2_{HS} \\
&= \frac{1}{N} \E_{u \sim \nu} \|C^{(u)}\|_{HS}^2 + \frac{N^2 - N}{N^2} \|C\|^2_{HS} - \|C\|^2_{HS} \\
  &= \frac{1}{N} \big ( \E_{u \sim \nu} \|C^{(u)}\|_{HS}^2 -  \|C\|^2_{HS} \big )
    = \frac{1}{N} \E_{u \sim \nu} \| C^{(u)} - C \|^2_{HS}.
\end{align*}
Setting \(Q = \E_{u \sim \nu} \| C^{(u)} - C \|^2_{HS}\) completes the proof.
\end{proof}

The following lemma, used in the proof of Theorem \ref{thm:approximation}, estimates Lipschitz constants
of various maps required in the proof.

\begin{lemma}
  \label{l:add} 
The maps \(\FX\), \(\FY\), \(\GX\) and \(\GY\) are globally Lipschitz:
\begin{align*}
 |\FX(v) - \FX(z)|_2 &\le \|v - z\|_\X, \quad \forall v,z \in \X\\
 |\FY(v) - \FY(v)|_2 &\le \|v - z\|_\Y, \quad \forall v,z \in \Y\\  
 \|\GX(v) - \GX(z)\|_\X &\le |v- z|_2, \quad \forall v,z \in \R^{\dX}\\  
 \|\GY(v) - \GY(z)\|_\X &\le  |v-z|_2, \quad \forall v,z \in \R^{\dY}.
\end{align*}
Furthermore, if $\Psi$ is locally Lipschitz and satisfies
\[\forall x,w \in \X\quad \|\Psi(x) - \Psi(w)\|_\Y \leq L(x,w) \|x-w\|_\X,\]
with  $L: \X \times \X \to \R_+$ that is symmetric with respect to its arguments, and increasing in the sense that
$L(s, w) \le L(x, w),$ if $ \| x \|_\X \ge \| s\|_\X$. 
Then
$\varphi$ is also locally Lipschitz and
\[|\varphi(v) - \varphi(z)|_2 \leq L(\GX(v), \GX(z)) |v-z|_2, \quad \forall v, z \in \R^{\dX}.\]
\end{lemma}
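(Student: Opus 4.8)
The plan is to read off the four Lipschitz bounds for the PCA encoders and decoders directly from their explicit formulas \eqref{eq:encoder} and \eqref{eq:decoder}, exploiting only orthonormality of the empirical eigenbases $\{\phi^\X_{j,N}\}_{j=1}^{\dX}$ and $\{\phi^\Y_{j,N}\}_{j=1}^{\dY}$, and then to obtain the bound on $\varphi$ by chaining these estimates with the local Lipschitz property of $\Psi$, using the representation $\varphi = \FY \circ \Psi \circ \GX$ from \eqref{eq:approxphi}.

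For the encoders, I would observe that by \eqref{eq:encoder}, for any $v,z \in \X$,
\[
|\FX(v) - \FX(z)|_2^2 \;=\; \sum_{j=1}^{\dX} \bigl| \langle v - z, \phi^\X_{j,N} \rangle_\X \bigr|^2 \;\le\; \|v - z\|_\X^2,
\]
the inequality being Bessel's inequality for the orthonormal system $\{\phi^\X_{j,N}\}_{j=1}^{\dX}$; taking square roots yields the first claim, and the identical computation with $\{\phi^\Y_{j,N}\}_{j=1}^{\dY}$ yields the second. For the decoders I would instead use Parseval's identity: by \eqref{eq:decoder}, for any $v,z \in \R^{\dX}$,
\[
\|\GX(v) - \GX(z)\|_\X^2 \;=\; \Bigl\| \sum_{j=1}^{\dX} (v_j - z_j) \phi^\X_{j,N} \Bigr\|_\X^2 \;=\; \sum_{j=1}^{\dX} |v_j - z_j|^2 \;=\; |v - z|_2^2,
\]
so $\GX$ is in fact a linear isometry onto its range, in particular $1$-Lipschitz; the same applies to $\GY$. (The asymmetry — equality for decoders but mere inequality for encoders — reflects the fact that $\{\phi_{j,N}\}$ is orthonormal but not complete in $\HH$.)

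For $\varphi$, I would chain the three estimates: for $v,z \in \R^{\dX}$,
\[
|\varphi(v) - \varphi(z)|_2 \;\le\; \|\Psi(\GX(v)) - \Psi(\GX(z))\|_\Y \;\le\; L(\GX(v),\GX(z))\,\|\GX(v) - \GX(z)\|_\X \;=\; L(\GX(v),\GX(z))\,|v - z|_2,
\]
using, in order, the $1$-Lipschitz property of $\FY$, the local Lipschitz hypothesis on $\Psi$, and the isometry property of $\GX$ just established; continuity of $\varphi$ is then immediate since $\GX$ is continuous and $L$ is real-valued. The globally $L$-Lipschitz case (constant $L$) and the auxiliary bounds $|\FX(x)|_2 \le \|x\|_\X$ and $|\varphi(x)|_2 \le |\varphi(0)|_2 + L|x|_2$ invoked in the proof of Theorem~\ref{thm:approximation} follow as special cases by taking $z = 0$ and noting $\FX(0) = 0$.

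This argument is entirely routine and I do not anticipate a genuine obstacle; the only point requiring care is bookkeeping — ensuring that the symmetry and monotonicity hypotheses on $L$, which play no role inside this lemma itself, are correctly propagated to the places where the bound on $\varphi$ is subsequently used (for instance the step $L(\Pi_{V^\X_{\dX,N}} x, x) \le L(x,x)$ in the proof of Theorem~\ref{thm:approximation-local-lip}).
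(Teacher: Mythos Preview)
Your proposal is correct and essentially identical to the paper's proof: the paper extends the empirical eigenvectors to a full orthonormal basis and invokes Parseval to bound the encoder (where you cite Bessel directly, which is the same thing), obtains equality for the decoder via orthonormality exactly as you do, and then chains the three $1$-Lipschitz bounds with the local Lipschitz hypothesis on $\Psi$ to handle $\varphi$. Your remark that the symmetry and monotonicity of $L$ are not used inside this lemma but only downstream is also accurate.
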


\begin{proof}
We establish that \(\FY\) and \(\GX\) are Lipschitz and
estimate the Lipschitz constants; the proofs for \(\FX\) and \(\GY\) are
similar. Let \(\phi_{1,N}^{\Y},\dots,\phi_{\dY,N}^{\Y}\) denote the eigenvectors of the empirical covariance with 
respect to the data \(\{y_j\}_{j=1}^N \) which span \(V_{\dY,N}^\Y\) and let \(\phi_{\dY+1,N}^{\Y}, \phi_{\dY+2,N}^{\Y}, \dots\)
be an orthonormal extension to \(\Y\). Then, by Parseval's identity,
\begin{align*}
|\FY(v) - \FY(z)|_2^2 &= \sum_{j=1}^{\dY} \langle v-z, \phi_{j,N}^{\Y} \rangle_{\Y} ^2 \leq  \sum_{j=1}^{\infty} \langle v-z, \phi_{j,N}^{\Y} \rangle_{\Y} ^2 = \|v-z\|_\Y^2.
\end{align*}
A similar calculation for $\GX$, using $\phi_{1,N}^{\X},\dots,\phi_{\dX,N}^{\X}$ the eigenvectors of the
  empirical covariance of the data $\{x_j\}_{j=1}^N$ yields
\begin{align*}
\|\GX(v) - \GX(z)\|_\X^2 =  \| \sum_{j=1}^{\dX} (v_j - z_j)\phi_{j,N}^{\X} \|_\X^2 
= \sum_{j=1}^{\dX} |v_j - z_j|^2
= |v-z|_2^2
\end{align*}
for any \(v,z \in \R^{\dX}\), using the fact that the empirical eigenvectors can be extended to an orthonormal basis for \(\X\). 
Recalling that \(\varphi = \FY \circ \Psi \circ \GX\), the above estimates immediately yield
\begin{equation*}
  \begin{aligned}
    |\varphi(v) - \varphi(z)|_2
    & \leq L(\GX(v), \GX(z)) |v-z|_2,\quad \forall v,z \in \R^{\dX}.
\end{aligned}
\end{equation*}
\end{proof}

The following lemma establishes a bound on the size of the set \(A\) 
that was defined in the proof of Theorems~\ref{thm:approximation} and \ref{thm:approximation-local-lip}.

\begin{lemma}
  \label{lemma:bounded_projection}
    Fix \(0 < \delta < 1\),  let \(x \sim \mu\) be a random variable and
    set \(M = \sqrt{\E_{x \sim \mu} \|x\|^2_\X / \delta}\). 
Define $\FX$ using the random dataset $\{ x_j \}_{j=1}^N \sim \mu$
then,
\begin{equation*}
  \mathbb{P} \left( \FX(x) \not\in [-M, M]^{\dX} \right) \le \delta,
\end{equation*}
where the probability is computed with  respect to both $x$  and the
 $ x_j$'s. 
\end{lemma}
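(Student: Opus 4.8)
The plan is to reduce the statement to a single Chebyshev estimate by exploiting the non-expansiveness of the PCA encoder established in Lemma~\ref{l:add}. First I would observe that, for \emph{any} realization of the training data $\{x_j\}_{j=1}^N$, and hence of the empirical eigenvectors $\phi^\X_{1,N},\dots,\phi^\X_{\dX,N}$, the encoder satisfies $|\FX(x)|_2 \le \|x\|_\X$ for every $x \in \X$. This is immediate from Lemma~\ref{l:add} applied with the second argument equal to $0$, since $\FX$ is linear and so $\FX(0)=0$. Because the Euclidean ball of radius $M$ in $\R^{\dX}$ is contained in the cube $[-M,M]^{\dX}$, we obtain the pointwise inclusion of events
\[
\{\FX(x) \notin [-M,M]^{\dX}\} \subseteq \{|\FX(x)|_2 > M\} \subseteq \{\|x\|_\X > M\},
\]
valid for every fixed dataset. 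The crucial point is that the rightmost event depends only on $x$ and not on the $x_j$'s, so the randomness in the PCA basis is irrelevant and it suffices to bound $\mathbb{P}_{x \sim \mu}(\|x\|_\X > M)$.

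Second, I would apply Chebyshev's inequality to $\|x\|_\X^2$:
\[
\mathbb{P}_{x \sim \mu}(\|x\|_\X > M) = \mathbb{P}_{x \sim \mu}(\|x\|_\X^2 > M^2) \le \frac{\E_{x \sim \mu}\|x\|_\X^2}{M^2} = \delta,
\]
where the last equality is exactly the definition $M^2 = \E_{x \sim \mu}\|x\|_\X^2/\delta$. Since the bounding event $\{\|x\|_\X > M\}$ has probability at most $\delta$ under the law of $x$ alone, and the event of interest is contained in it for every realization of the data, the joint probability over $x$ and the $x_j$'s is also at most $\delta$; equivalently, one integrates the data-independent bound $\delta$ against the law of $\{x_j\}_{j=1}^N$.

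I do not anticipate a genuine obstacle here: the argument is a one-line tail bound. The only point requiring a little care is the order of quantifiers, namely that the non-expansiveness bound from Lemma~\ref{l:add} holds \emph{uniformly over} all realizations of the empirical basis, which is what lets the final tail estimate—and hence the joint probability statement—go through without any additional work on the data distribution beyond the assumed finite second moment.
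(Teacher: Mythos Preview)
Your proof is correct. The paper uses the same two ingredients—Chebyshev's inequality and the Bessel/Parseval identity for the empirical orthonormal basis—but organizes them differently: it applies Chebyshev coordinate-by-coordinate to each $|\langle x,\phi^{\X}_{j,N}\rangle_\X|$, sums the resulting bounds (effectively a union bound over the $\dX$ coordinates), and then invokes Parseval to control $\sum_{j=1}^{\dX}\E_{x\sim\mu}|\langle x,\phi^{\X}_{j,N}\rangle_\X|^2 \le \E_{x\sim\mu}\|x\|_\X^2$, noting that the last quantity does not depend on the random basis. Your route is a mild streamlining: by invoking the non-expansiveness $|\FX(x)|_2\le\|x\|_\X$ from Lemma~\ref{l:add} together with the inclusion of the Euclidean ball of radius $M$ in the cube $[-M,M]^{\dX}$, you reduce to a \emph{single} Chebyshev estimate on $\|x\|_\X$, and the data-independence of the bounding event is immediate. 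Both arguments yield the same constant and handle the randomness in the empirical basis in the same way; yours simply packages the coordinate-wise work into one step.
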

\begin{proof}
  Denote by \(\phi_{1,N}^{\X},\dots,\phi_{\dX,N}^{\X}\) the orthonormal set used to define \(V^{\X}_{\dX,N}\)
  \eqref{eq:pcasubspace} and let \(\phi_{\dX + 1,N}^{\X},\phi_{\dX + 2,N}^{\X},\dots\)
be an orthonormal extension of this basis to \(\X\). For any \(j \in \{1,\dots, \dX\}\), by Chebyshev's inequality, we have
\[\Prob_{x \sim \mu}(|\langle x, \phi_{j,N}^{\X} \rangle_\X | \geq M) \leq \frac{\E_{x \sim \mu}|\langle x, \phi_{j,N}^{\X} \rangle_\X |^2}{M^2}.\]
Note that the expectation is taken only with respect to the randomness in \(x\)
and not \(\phi_{1,N}^{\X},\dots,\phi_{\dX,N}^{\X}\), so the right hand side 
is itself a random variable. We further compute 
\begin{align*}
  &\Prob_{x \sim \mu}(|\langle x, \phi_{1,N}^{\X} \rangle_\X | \geq M, \dots, |\langle x, \phi_{\dX,N}^{\X} \rangle_\X | \geq M) \\
  &\qquad \leq \frac{1}{M^2} \E_{x \sim \mu} \sum_{j=1}^{\dX} |\langle x, \phi_{j,N}^{\X} \rangle_\X|^2
    \leq \frac{1}{M^2} \E_{x \sim \mu} \sum_{j=1}^{\infty} |\langle x, \phi_{j,N}^{\X} \rangle_\X |^2
    = \frac{1}{M^2} \E_{x \sim \mu} \|x\|^2_\X
\end{align*}
noting that \(\|x\|_\X^2 = \sum_{j=1}^\infty |\langle x, \xi_j \rangle_\X|^2\) for any orthonormal basis \(\{\xi_j\}_{j=1}^\infty\) of \(\X\)
hence the randomness in \(\phi_{1,N}^{\X},\phi_{2,N}^{\X},\dots\) is inconsequential. 
Thus we find that,  with $\Prob$ denoting probability with
respect to both $x \sim \mu$ and the random
data used to define $\FX$,
\begin{align*}
\Prob(|\langle x, \phi_{1,N}^{\X} \rangle_\X | \leq M, \dots, |\langle x, \phi_{\dX,N}^{\X} \rangle_\X | \leq M) &\geq 1- \frac{1}{M^2} \E_{x \sim \mu} \|x\|^2_\X, = 1 - \delta 
\end{align*}
the desired result.
\end{proof}

\section{Analyticity of the Poisson Solution Operator}
\label{app:b}

Define \(\X = \{\xi \in \ell^\infty(\mathbb{N};\R) : \|\xi\|_{\ell^\infty} \leq 1\}\) and let \(\{\phi_j\}_{j=1}^\infty\)
be some sequence of functions with the property that \((\|\phi_j\|_{L^\infty})_{j \geq 1} \in \ell^p(\mathbb{N};\R)\)
for some \(p \in (0,1)\).
Define \(\Psi : \X \to H_0^1(\OOmega;\R)\) as mapping a set of coefficients \(\xi = (\xi_1,\xi_2,\dots) \in \X\) to \(u \in H_0^1(\OOmega;\R)\) the unique weak solution of 
\[-\Delta u = \sum_{j=1}^\infty \xi_j \phi_j \quad \text{in} \quad \OOmega, \qquad u|_{\partial \OOmega} = 0.\]
Note that since \(\OOmega\) is a bounded domain and \(\xi \in \X\), we have that 
\(\sum_{j=1}^\infty \xi_j \phi_j \in L^2(\Omega;\R)\) since our assumption implies \((\|\phi_j\|_{L^\infty})_{j \geq 1} \in \ell^1(\mathbb{N};\R)\). Therefore \(u\)
is indeed the unique weak-solution of the Poisson equation \cite[Chap. 6]{evans10}
and \(\Psi\) is well-defined.

\begin{theorem}
\label{thm:poissonanalytic}
Suppose that there exists \(p \in (0,1)\) such that \((\|\phi_j\|_{L^\infty})_{j \geq 1} \in \ell^p(\mathbb{N};\R)\). 
Then 
\[\lim_{K \to \infty} \sup_{\xi \in \X} \|\Psi(\xi) - \sum_{j=1}^K \xi_j \eta_j \|_{H_0^1} = 0\]

where, for each  \(j \in \mathbb{N}\), \(\eta_j \in H_0^1(\OOmega;\R)\) satisfies
\[-\Delta \eta_j =\phi_j \quad \text{in} \quad \OOmega, \qquad u|_{\partial \OOmega} = 0.\]

\end{theorem}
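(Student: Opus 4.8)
The plan is to exploit the linearity of the Poisson solution operator together with $H_0^1$-summability of the family $\{\eta_j\}$. First I would record that, since $p \in (0,1)$, the trivial embedding $\ell^p(\mathbb{N};\R) \hookrightarrow \ell^1(\mathbb{N};\R)$ gives $(\|\phi_j\|_{L^\infty})_{j\ge 1} \in \ell^1(\mathbb{N};\R)$; combined with boundedness of $\OOmega$ this yields $(\|\phi_j\|_{L^2})_{j\ge 1} \in \ell^1(\mathbb{N};\R)$, via $\|\phi_j\|_{L^2} \le |\OOmega|^{1/2}\|\phi_j\|_{L^\infty}$. Next, testing the weak form of $-\Delta\eta_j = \phi_j$ against $\eta_j$ and invoking the Poincar\'e inequality produces the standard a priori bound $\|\eta_j\|_{H_0^1} \le C\|\phi_j\|_{L^2}$ for a constant $C = C(\OOmega)$ (this is the usual Lax--Milgram estimate, see \cite[Chap. 6]{evans10}), whence $\sum_{j\ge 1}\|\eta_j\|_{H_0^1} \le C|\OOmega|^{1/2}\sum_{j\ge 1}\|\phi_j\|_{L^\infty} < \infty$.

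With this summability in hand, the series $\sum_{j\ge 1}\xi_j\eta_j$ converges absolutely in $H_0^1(\OOmega;\R)$, uniformly over $\xi \in \X$, because $|\xi_j|\le 1$ for all $j$; in particular the tail obeys $\|\sum_{j>K}\xi_j\eta_j\|_{H_0^1} \le \sum_{j>K}\|\eta_j\|_{H_0^1}$, a quantity independent of $\xi$ that tends to $0$ as $K\to\infty$. It then remains to identify the sum of this series with $\Psi(\xi)$. For this I would note that the partial sums $u_K := \sum_{j=1}^K\xi_j\eta_j$ solve, by linearity of $-\Delta$, the problem $-\Delta u_K = \sum_{j=1}^K \xi_j\phi_j$ with homogeneous Dirichlet data; since $\sum_{j=1}^K\xi_j\phi_j \to \sum_{j\ge 1}\xi_j\phi_j$ in $L^2(\OOmega;\R)$ (again using $(\|\phi_j\|_{L^2})_{j\ge 1} \in \ell^1(\mathbb{N};\R)$ together with $|\xi_j|\le 1$), and the solution operator $f\mapsto (-\Delta)^{-1}f$ is bounded linear from $L^2(\OOmega;\R)$ into $H_0^1(\OOmega;\R)$, continuity yields $u_K \to \Psi(\xi)$ in $H_0^1(\OOmega;\R)$; uniqueness of limits in $H_0^1(\OOmega;\R)$ then forces $\Psi(\xi) = \sum_{j\ge 1}\xi_j\eta_j$.

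Combining the two observations, $\|\Psi(\xi) - \sum_{j=1}^K\xi_j\eta_j\|_{H_0^1} = \|\sum_{j>K}\xi_j\eta_j\|_{H_0^1} \le \sum_{j>K}\|\eta_j\|_{H_0^1}$, and taking the supremum over $\xi\in\X$ followed by $K\to\infty$ gives the claim. I do not anticipate a genuine obstacle here: the only point requiring mild care is making the interchange ``$\Psi$ of the infinite sum equals the infinite sum of the $\eta_j$'' rigorous, which is handled by the boundedness of the linear solution operator on $L^2$ as above; the sole role of the hypothesis $p<1$ is to upgrade $\ell^p$-summability of $(\|\phi_j\|_{L^\infty})_{j\ge 1}$ to $\ell^1$-summability, which is precisely what renders both the tail estimate and the $L^2$-convergence of $\sum_{j=1}^K\xi_j\phi_j$ uniform in $\xi$.
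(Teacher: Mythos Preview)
Your argument is correct and shares the same skeleton as the paper's proof: linearity of the solution operator, the standard energy (Poincar\'e/Lax--Milgram) estimate, and summability of $(\|\phi_j\|_{L^\infty})_{j\ge 1}$ to control the tail uniformly in $\xi$. The one noteworthy difference is in how the tail is dispatched. You pass through the embedding $\ell^p \hookrightarrow \ell^1$ and simply use that tails of an $\ell^1$ sequence vanish, which is exactly what the stated theorem requires. The paper instead bounds the tail via Stechkin's inequality, obtaining the quantitative rate $\sum_{j>K}\|\phi_j\|_{L^\infty} \le K^{1-1/p}\|(\|\phi_j\|_{L^\infty})_{j\ge 1}\|_{\ell^p}$; this is stronger than needed for the limit statement but ties the result to the algebraic convergence rates appearing elsewhere in the parametric-PDE literature the paper is drawing on. Your route is slightly more economical for the theorem as written; the paper's buys an explicit rate at essentially no extra cost.
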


\begin{proof}
By linearity and Poincar{\'e} inequality, we obtain the Lipschitz estimate
\[\|\Psi(\xi^{(1)}) - \Psi(\xi^{(2)})\|_{H_0^1}
  \leq C \| \sum_{j=1}^\infty (\xi^{(1)}_j - \xi^{(2)}_j) \phi_j \|_{L^2}, \quad \forall \xi^{(1)}, \xi^{(2)} \in \X\] 
for some \(C > 0\).  Now let \(\xi = (\xi_1,\xi_2,\dots) \in \X\) be arbitrary 
and define the sequence \(\xi^{(1)} = (\xi_1,0,0,\dots), \xi^{(2)} = (\xi_1,\xi_2,0,\dots), \dots\). Note 
that, by linearity, for any \(K \in \mathbb{N}\), 
$\Psi(\xi^{(K)}) = \sum_{j=1}^K \xi_j \eta_j.$
Then, using our Lipschitz estimate, 
\begin{align*}
\|\Psi(\xi) - \Psi(\xi^{K})\|_{H_0^1} &\lesssim \|\sum_{j={K+1}}^\infty \xi_j \phi_j \|_{L^2} 
\lesssim \sum_{j=K+1}^\infty \|\phi_j\|_{L^\infty} 
\leq K^{1- \frac{1}{p}} \|(\|\phi_j\|_{L^\infty})_{j \geq 1}\|_{\ell^p}
\end{align*}
where the last line follows by Stechkin’s inequality \cite[Sec. 3.3]{cohenanalytic}. Taking the supremum over \(\xi \in \X\)
and the limit \(K \to \infty\) completes the proof.
\end{proof}

\section{Error During Training}
\label{app:error}

Figures \ref{fig:gaussianelliptic_training} and \ref{fig:coeffpoisson_training} show the relative test error computed during the training process for the problems 
presented in Section~\ref{sec:numlip}. For both problems, we observe a slight amount of overfitting{}
when more training samples are used and the reduced dimension is sufficiently large. This is because the true map of 
interest \(\varphi\) is linear while the neural network parameterization is highly non-linear hence 
more prone to overfitting larger amounts of data. While this suggests that simpler neural networks might perform better 
on this problem, we do not carry out such experiments as our goal is simply to show that building in \textit{a priori}
information about the problem (here linearity) can be beneficial as show in Figures \ref{fig:ellipticproblem} and \ref{fig:poissonproblem}.

\begin{figure}[h]
    \centering
    \begin{subfigure}[b]{0.24\textwidth}
        \includegraphics[width=\textwidth]{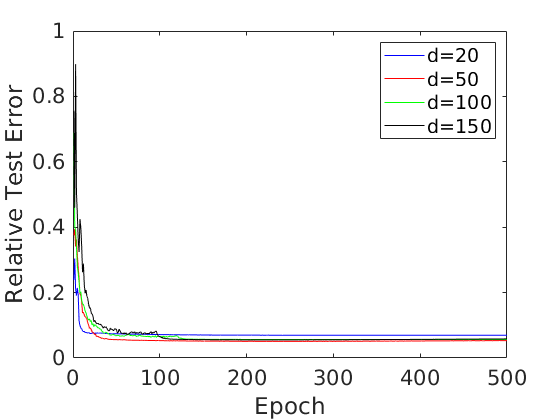}
        \caption{\(N=1000\)}
    \end{subfigure}
    \begin{subfigure}[b]{0.24\textwidth}
        \includegraphics[width=\textwidth]{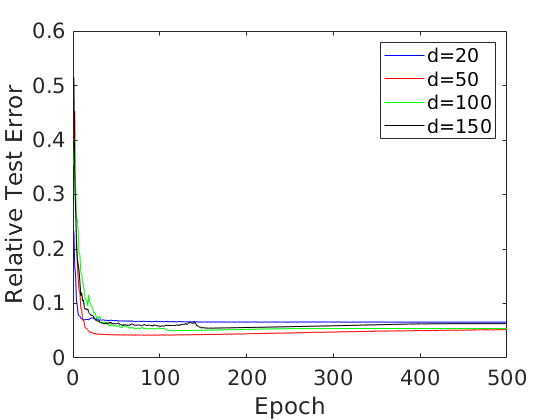}
        \caption{\(N=2000\)}
    \end{subfigure}
    \begin{subfigure}[b]{0.24\textwidth}
        \includegraphics[width=\textwidth]{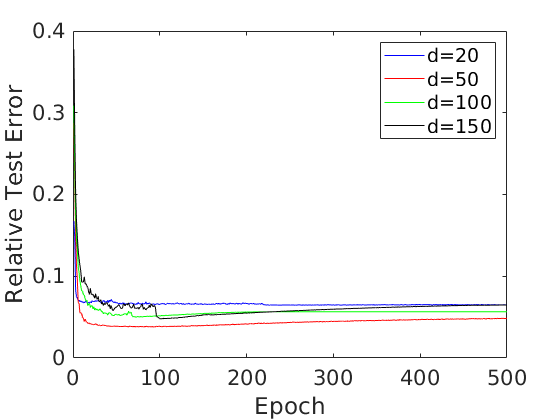}
        \caption{\(N=4000\)}
    \end{subfigure}
    \begin{subfigure}[b]{0.24\textwidth}
        \includegraphics[width=\textwidth]{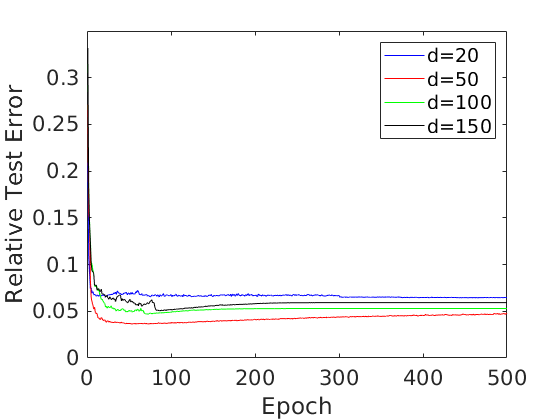}
        \caption{\(N=5000\)}
    \end{subfigure}

    \caption{Relative test errors as a function of the training epoch on the linear elliptic problem.
    The amount of training examples used is varied in each panel.}
    \label{fig:gaussianelliptic_training}
\end{figure}

\begin{figure}[h]
    \centering
    \begin{subfigure}[b]{0.24\textwidth}
        \includegraphics[width=\textwidth]{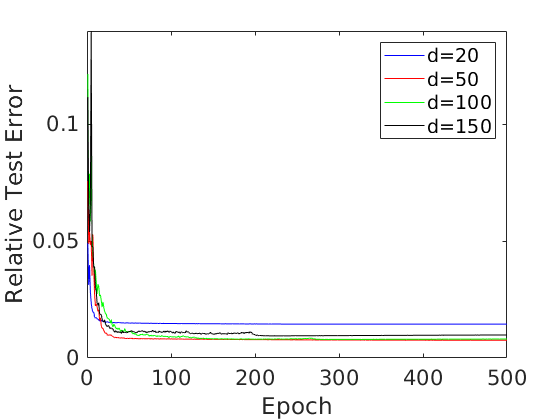}
        \caption{\(N=1000\)}
    \end{subfigure}
    \begin{subfigure}[b]{0.24\textwidth}
        \includegraphics[width=\textwidth]{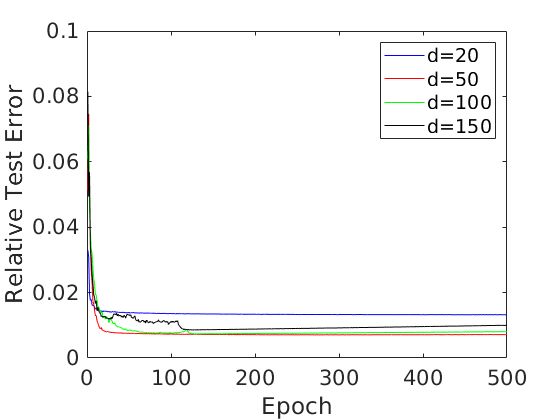}
        \caption{\(N=2000\)}
    \end{subfigure}
    \begin{subfigure}[b]{0.24\textwidth}
        \includegraphics[width=\textwidth]{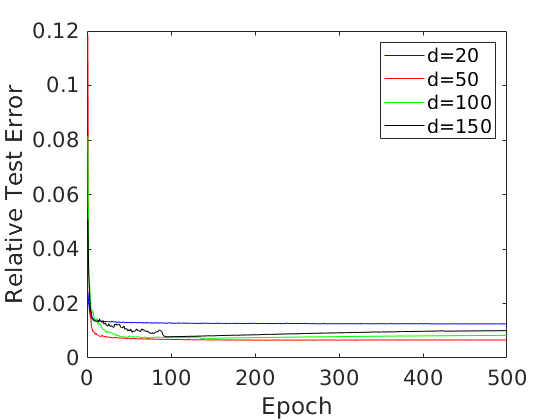}
        \caption{\(N=4000\)}
    \end{subfigure}
    \begin{subfigure}[b]{0.24\textwidth}
        \includegraphics[width=\textwidth]{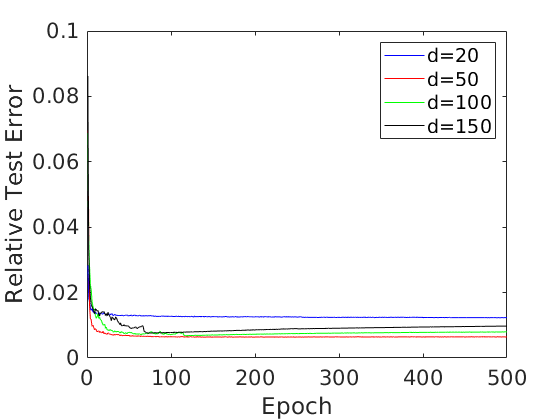}
        \caption{\(N=5000\)}
    \end{subfigure}

    \caption{Relative test errors as a function of the training epoch on the Poisson problem.
    The amount of training examples used is varied in each panel.}
    \label{fig:coeffpoisson_training}
\end{figure}

\end{document}